\documentclass[11pt]{amsart}
\usepackage{amscd}
\usepackage{amsfonts}
\usepackage{amssymb,latexsym}
\usepackage{enumerate}
\usepackage{amsmath}
\usepackage{mathrsfs}
\usepackage{amsthm}
\usepackage{hyperref}
\usepackage[square, comma, sort&compress, numbers]{natbib}
\usepackage{bm}
\usepackage{esint}
\usepackage{fancyhdr}
\usepackage{lastpage}
\usepackage{layout}
\usepackage{ulem}
\usepackage{extarrows}
\pagestyle{plain}
\textheight 220mm \textwidth 155mm \baselineskip 15pt \footskip 20pt
\topmargin=0cm 
\oddsidemargin0.5cm \evensidemargin 0.5cm
\newcommand{\ol}{\overline}

\newcommand{\pa}{\partial}
\newcommand{\vp}{\varphi}
\DeclareMathOperator\tr{tr}
\DeclareMathOperator\id{id}
\DeclareMathOperator\rank{rank}
\DeclareMathOperator\Ric{Ric}

\DeclareMathOperator\dvol{dvol}
\DeclareMathOperator\End{End}
\DeclareMathOperator\loc{loc}

\DeclareMathOperator\Aut{Aut}
\begin{document}
\newcounter{remark}
\newcounter{theor}
\setcounter{remark}{0}
\setcounter{theor}{1}
\newtheorem{claim}{Claim}[section]
\newtheorem{theorem}{Theorem}[section]
\newtheorem{lemma}{Lemma}[section]
\newtheorem{corollary}{Corollary}[section]
\newtheorem{corollarys}{Corollary}
\newtheorem{proposition}{Proposition}[section]
\newtheorem{question}{question}
\newtheorem{defn}{Definition}[section]
\newtheorem{examp}{Example}[section]
\newtheorem{assumption}{Assumption}[section]
\newtheorem{rem}{Remark}[section]
\newtheorem*{theorem1}{Theorem}
\numberwithin{equation}{section}
\title{Poisson metrics and Higgs bundles over noncompact K\"{a}hler manifolds}
\author{Di Wu}
\address{Di Wu, School of Mathematical Sciences, University of Science and Technology of China, Hefei 230026, People's Republic of China}
\email{wudi123@mail.ustc.edu.cn}
\author{Xi Zhang}
\address{Xi Zhang, School of Mathematical Sciences, University of Science and Technology of China, Hefei 230026, People's Republic of China}
\email{mathzx@ustc.edu.cn}
\subjclass[]{53C07, 53C21}
\keywords{Flat bundle, Noncompact manifold, Analytically stability, Poisson metric, Higgs bundle, Nonabelian Hodge correspondence}
\thanks{The research was partially supported by the project "Analysis and Geometry on Bundle" of Ministry of Science and Technology of the People's
Republic of China, No. SQ2020YFA070080. The authors are partially supported by NSF in China No. 11625106, 11801535 and 11721101.}
\maketitle
\begin{abstract}
In this paper, we study the existence of Poisson metrics on flat vector bundles over noncompact Riemannian manifolds and discuss related consequence, specially on the applications in Higgs bundles,
towards generalizing Corlette-Donaldson-Hitchin-Simpson's nonabelian Hodge correspondence to noncompact K\"{a}hler manifolds setting.
\end{abstract}
\section{Introduction}
\subsection{Background}
Let $(E,D)$ be a vector bundle over a Riemannian manifold $(M,g)$, we say $(E,D)$ is simple if it admits no proper $D$-invariant sub-bundle
and $(E,D)$ is semi-simple\footnote{In literature, one sometimes says that complete reducible or reductive.} if it is a direct sum of $(E_i,D_i)$ and
each of them is simple. The connection $D$ is called irreducible if it admits no nontrivial covariantly constant section of $\End(E)$. Given any metric $H$ on $E$, it splits the connection as
\begin{equation}\begin{split}\label{decomposition}
D=D_H+\psi_H,
\end{split}\end{equation}
where $D_H$ is a connection preserving $H$ and $\psi_H\in\Omega^1(\End(E))$ is self-adjoint with respect to $H$. We say $H$ is harmonic if it satisfies
\begin{equation}\begin{split}\label{harmonicequation}
D_H^\ast\psi_H=0,
\end{split}\end{equation}
where $D_H^\ast$ is the adjoint of $D_H$. The notion of harmonic map originated from Eells-Sampson \cite{ES1964}
and was generalized to the notion of harmonic metric by Corlette \cite{Co1988}. The motivation for this name may be explained as follows.
By the Riemann-Hilbert correspondence, suppose $\rho_D:\pi_1(M)\rightarrow GL(r)$ bs the representation corresponding to a flat bundle $(E,D)$,
then any metric $H$ on $E$ induces a $\rho_D$-equivariant map $f_H$ from $\tilde{M}$ to $GL(r)/U(r)$, where $\tilde{M}$ is the universal of $M$.
Then we can see that $H$ being harmonic is equivalent to $f_H$ being a harmonic map or $df_H$ being a harmonic $1$-form with values in $f_H^{-1}T(GL(r)/U(r))$.
\par The study of harmonic metrics is a fundamental problem which remains an active branch of current research in differential geometry,
it builds a bridge between geometry analysis and many other areas including group representations, nonabelian Hodge theory.
Let $(M,g)$ be a compact Riemannian manifold, the main existence result is the following
\begin{theorem}[Donaldson \cite{Do1987},Corlette \cite{Co1988}]
A flat bundle admits a harmonic metric if and only if it arises from a semi-simple representation of the fundamental group $\pi_1(M)$.
\end{theorem}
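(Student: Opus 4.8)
The plan is to prove the two implications by different mechanisms: the ``only if'' (necessity) direction by an integration-by-parts splitting, and the ``if'' (existence) direction by the Eells--Sampson harmonic map heat flow, with semi-simplicity entering only to guarantee convergence.

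\emph{Necessity.} Assume $H$ is harmonic. It suffices to show that every proper $D$-invariant subbundle $S\subset E$ admits a $D$-invariant complement, since then $(E,D)$ splits and induction on $\rank E$ presents it as a direct sum of simple flat bundles, i.e.\ $\rho_D$ is semi-simple. Let $\pi\in\Gamma(\End E)$ be the $H$-orthogonal projection onto $S$. A short computation shows that $D$-invariance of $S$ forces $\beta:=D\pi$ to vanish on $S$ and to take values in $\mathrm{Hom}(S^\perp,S)$, and that, writing $D=D_H+\psi_H$, the form $D_H\pi$ is the $H$-self-adjoint part of $\beta$ while $[\psi_H,\pi]$ is its anti-self-adjoint part. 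I would then pair the harmonic equation $D_H^\ast\psi_H=0$ with $\pi$ and integrate by parts over the compact manifold $M$, obtaining $\int_M\langle\psi_H,D_H\pi\rangle\,\dvol=0$; the off-diagonal block structure makes the left-hand side a negative multiple of $\int_M|\beta|^2\,\dvol$, whence $\beta=0$. Thus $D\pi=0$, so $S^\perp=\ker\pi$ is $D$-invariant and the splitting follows.

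\emph{Existence.} For the converse I pass to the equivariant picture described above: a metric $H$ is the same datum as a $\rho_D$-equivariant map $f_H\colon\tilde M\to N:=GL(r)/U(r)$, harmonicity of $H$ being harmonicity of $f_H$, and $N$ is a complete, simply connected symmetric space of nonpositive sectional curvature. I would seek a harmonic $f$ by minimizing the energy $\mathcal E(f)=\tfrac12\int_M|df|^2\,\dvol$ through the harmonic map heat flow $\partial_t f=\tau(f)$, where $\tau$ is the tension field. Because the target curvature is nonpositive, the Bochner formula for the energy density yields the a priori estimates that preclude finite-time singularities, so by Eells--Sampson the flow exists for all $t\ge 0$ and the energy is nonincreasing.

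The crux, and the only step where semi-simplicity is used, is convergence as $t\to\infty$: since $N$ is noncompact the flow may escape to infinity rather than settle at a critical point. The key is a uniform bound on the displacement $\sup_{x\in M}\dist_N\!\big(f_t(x),f_0(x)\big)$, from which the parabolic estimates extract a subsequence converging to an equivariant harmonic map, i.e.\ a harmonic metric. I expect this to be the main obstacle. The mechanism, following Corlette, is a dichotomy: convexity of the distance function on the nonpositively curved $N$ shows that a failure of the displacement bound forces the maps to run off along an asymptotic direction; analyzing this direction yields a proper $D$-invariant subbundle of $E$ that admits no $D$-invariant complement, contradicting semi-simplicity. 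Hence the displacement remains bounded and the limiting harmonic metric exists.
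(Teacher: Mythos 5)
Your necessity direction is correct and is essentially the paper's own argument: pairing $D_H^\ast\psi_H=0$ with the orthogonal projection $\pi$ and integrating by parts is precisely the Chern--Weil formula $(\ref{Chernweilbundle})$, whose equality case (the paper's Proposition \ref{polystable}, specialized to a compact base, where every $D$-invariant subbundle has vanishing analytic degree) yields $D\pi=0$, hence the $D$-invariant complement and the induction; your pointwise identity that $\langle\psi_H,D_H\pi\rangle$ is a negative multiple of $|D\pi|^2$ checks out. The existence direction is where you genuinely diverge from the paper. The flow itself is the same object --- under the dictionary $H\leftrightarrow f_H$, the equivariant harmonic map heat flow is exactly the metric flow $(\ref{outlineflow})$, i.e.\ $(\ref{flow1})$ --- but the convergence mechanisms differ. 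You force convergence through the geometry at infinity of $GL(r)/U(r)$: displacement bounds, convexity of the distance function, and the Donaldson--Corlette dichotomy in which unbounded drift produces an invariant subbundle admitting no invariant complement. The paper instead trades semi-simplicity for (poly-)stability (equivalent on compact $M$, since all analytic degrees vanish there) and derives the crucial $C^0$ estimate on the evolving metrics from the identity of Proposition \ref{keyproposition} combined with Simpson's destabilizing argument; as the authors emphasize, this bypasses Uhlenbeck's compactness theorem and any analysis at infinity of the symmetric space, which is exactly what lets Theorem \ref{thm1} and Corollary \ref{cor1} extend the statement to noncompact base manifolds, where displacement and compactness arguments are unavailable. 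Your route buys the classical, geometrically transparent picture; the paper's buys uniformity across compact and noncompact settings and the stability formulation needed for the nonabelian Hodge correspondence. One caveat: the dichotomy step you rightly call the crux --- extracting from unbounded displacement a fixed direction at infinity, i.e.\ a parabolic reduction, and contradicting semi-simplicity --- is asserted rather than carried out, and it is the genuinely hard part of Corlette's theorem; as written, your existence half is a sound plan rather than a complete proof.
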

If $(M,g)$ happens to be a compact K\"{a}hler manifold, the significance of this theorem lies in the construction of Higgs bundle. In fact, the pair
$(D_H^{0,1},\psi_H^{1,0})$ will determine a poly-stable Higgs structure, provided $H$ being harmonic. For this reason, sometimes we then
say that $D$ satisfies the Hitchin's self-duality equation \cite{Hi1987} and we call $D$ a Hitchin connection.
Combing this with the Donaldson-Uhlenbeck-Yau theorem(\cite{Do1985,UY1986}) for Higgs bundles \cite{Hi1987,Si1988}, one obtains
Corlette-Donaldson-Hitchin-Simpson's nonabelian Hodge correspondence:
\begin{theorem}[\cite{Co1988,Do1987,Hi1987,Si1988}]
On a compact K\"{a}hler manifold, there is an one to one correspondence between the category of seme-simple flat bundles and the category of poly-stable Higgs bundles with vanishing Chern numbers.
\end{theorem}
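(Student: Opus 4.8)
The plan is to realize the stated correspondence as a composition of two equivalences, each factoring through the intermediate category of harmonic bundles, and then to check that the two resulting functors are mutually quasi-inverse. Throughout I fix the K\"{a}hler form $\omega$ on $M$ and, for a metric $H$, use the type decomposition $D_H=\partial_H+\bar\partial_H$ and $\psi_H=\psi_H^{1,0}+\psi_H^{0,1}$ with $\psi_H^{0,1}=(\psi_H^{1,0})^{\ast}$ forced by self-adjointness of $\psi_H$.

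First I would treat the passage from a semi-simple flat bundle to a poly-stable Higgs bundle. Given $(E,D)$ semi-simple, the Donaldson--Corlette theorem supplies a harmonic metric $H$, hence the decomposition $D=D_H+\psi_H$ with $D_H^{\ast}\psi_H=0$. Setting $\bar\partial_E:=D_H^{0,1}$ and $\theta:=\psi_H^{1,0}$, I would expand the flatness relation $D^2=0$ and separate it into its self-adjoint and anti-self-adjoint parts, yielding $D_H\psi_H=0$ together with $F_{D_H}+\psi_H\wedge\psi_H=0$. Feeding these, together with the harmonicity $D_H^{\ast}\psi_H=0$, into the K\"{a}hler identities should force $\bar\partial_E^2=0$ (so $(E,\bar\partial_E)$ is holomorphic), $\bar\partial_E\theta=0$, and $\theta\wedge\theta=0$, so that $(E,\bar\partial_E,\theta)$ is a Higgs bundle and $H$ solves $F_{D_H}+[\theta,\theta^{\ast}]=0$, i.e. $H$ is Hermitian--Einstein with vanishing Einstein factor. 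That a Higgs bundle admitting a Hermitian--Einstein metric is poly-stable of degree zero is then a standard consequence, while the flatness of $D$ forces all Chern numbers to vanish by Chern--Weil theory.

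Conversely, given a poly-stable Higgs bundle $(E,\bar\partial_E,\theta)$ with vanishing Chern numbers, the Hitchin--Simpson theorem (the Donaldson--Uhlenbeck--Yau theorem in the Higgs setting) provides a Hermitian--Einstein metric $H$ solving $F_{D_H}+[\theta,\theta^{\ast}]=\lambda\cdot\id\otimes\omega$, and vanishing degree forces $\lambda=0$. I would then reassemble $D:=D_H+\theta+\theta^{\ast}$ and verify, once more via the K\"{a}hler identities, that $D$ is flat; here the vanishing of the second Chern character is essential, since a Chern--Weil/L\"{u}bke-type identity expresses the relevant integral as $\int_M|F_{D}|^2\,\dvol$ up to nonnegative terms, and its vanishing upgrades the Hermitian--Einstein equation to genuine flatness $D^2=0$. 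Semi-simplicity of $(E,D)$ then follows from poly-stability of the Higgs bundle together with uniqueness of the Hermitian--Einstein metric up to automorphisms of each stable summand.

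Finally I would check that the two constructions are mutually inverse and functorial. They agree at the level of objects because both are governed by the \emph{same} harmonic/Hermitian--Einstein metric, which is unique up to scaling on each stable factor; and morphisms are matched by observing that a $D$-flat bundle map is automatically holomorphic and intertwines the Higgs fields once it is compatible with these canonical metrics, giving an equivalence of categories in both directions. The step I expect to be the main obstacle is precisely the PDE-to-holomorphic dictionary in the middle two paragraphs: making rigorous, through a careful tracking of the self-adjoint/anti-self-adjoint splitting and the K\"{a}hler identities, that harmonicity of $H$ is genuinely \emph{equivalent} to $(D_H^{0,1},\psi_H^{1,0})$ defining a Higgs bundle with $H$ Hermitian--Einstein, and conversely that Hitchin's equation together with the vanishing of $\ch_1$ and $\ch_2$ yields a flat connection rather than a merely projectively flat one.
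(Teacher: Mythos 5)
Your overall route coincides with the one the paper outlines (and cites): produce a harmonic metric on a semi-simple flat bundle via Donaldson--Corlette, read off a poly-stable Higgs structure from the decomposition $D=D_H+\psi_H$, and go back via the Hitchin--Simpson Donaldson--Uhlenbeck--Yau theorem, using vanishing of $\ch_1$ and $\ch_2$ to upgrade the Hermitian--Einstein equation to flatness of $D_H+\theta+\theta^{\ast}$. That converse direction, and the matching of the two constructions through uniqueness of the canonical metric, are correctly sketched.

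However, there is a genuine gap at exactly the step you flag as the main obstacle, and your proposed remedy (``careful tracking of the self-adjoint/anti-self-adjoint splitting and the K\"{a}hler identities'') cannot close it, because the needed statement is not a pointwise consequence of those identities. Harmonicity $D_H^{\ast}\psi_H=0$ together with flatness is \emph{not} locally equivalent to the Higgs conditions $(D_H^{0,1})^2=0$, $D_H^{0,1}\psi_H^{1,0}=0$, $\psi_H^{1,0}\wedge\psi_H^{1,0}=0$. What the K\"{a}hler identities actually yield is only the contracted identity $\sqrt{-1}\Lambda_\omega G_H=\frac{1}{2}D_H^{\ast}\psi_H=0$ for the pseudo-curvature $G_H=(D_H^{0,1}+\psi_H^{1,0})^2$, and the passage from $\Lambda_\omega G_H=0$ to $G_H=0$ is a global Bochner--Stokes argument due to Siu--Sampson and Corlette \cite{Si1980,Sa1986,Co1988}: on a compact K\"{a}hler manifold one writes $\int_X|G_H|_H^2\dvol_\omega=\int_X\tr(G_H\wedge G_H)\wedge\frac{\omega^{n-2}}{(n-2)!}$ and shows the right-hand side vanishes by Stokes, since the flatness of $D$ makes $\tr(G_H\wedge G_H)\wedge\omega^{n-2}$ an exact form. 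Compactness is essential here: in complex dimension at least two a harmonic metric need not be pluriharmonic without such a global argument --- already for the trivial rank-one flat bundle over $\mathbb{C}^2$, the metric $h=e^{u}$ with $u=|z_1|^2-|z_2|^2$ is harmonic ($\Delta u=0$) but not pluriharmonic ($\pa\ol\pa u\neq0$) --- so no local manipulation of the splitting can force the Higgs conditions. This integral vanishing mechanism is precisely what the paper must reconstruct in the noncompact setting in Proposition \ref{Higgsstructure2}, equations $(\ref{sec425})$--$(\ref{sec426})$, where the integration by parts has to be justified by cutoff functions and $L^2$ bounds on $\psi_H^{\perp}$; that is the real analytic content of the step. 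Once you replace ``the K\"{a}hler identities should force'' by this Siu--Sampson vanishing argument, the rest of your proposal goes through.
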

\par This identification can be traced back to Narasimhan-Seshadri \cite{NS1965} and has yielded serval fascinating topological results, specially on the fundamental groups of compact K\"{a}hler manifolds, see \cite{Co1993,Si1992} for more details. In \cite{JZ1996,JZ1997}, Jost-Zuo studied the representations of fundamental groups for quasi-projective varieties by using the theory of harmonic metrics on quasi-projective varieties.
Besides that, there are also some partial existence results for harmonic metrics on noncompact manifolds. Simpson \cite{Si1990}
obtained the existence of harmonic metrics on punctured Riemannian surface in his study of nonabelian Hodge theory over noncompact curves.
From the view point of equivariant harmonic maps, Li \cite{Li1996} considered the case that $M=\ol{M}-D$ equipped with a Poincar\'{e}-like
metric $g$, where $\ol{M}$ is a compact complex manifold and $D$ be a divisor with normal crossings in $\ol{M}$.
Recently, Mochizuki makes a significant process towards the generations of the Kobayashi-Hitchin correspondence and the nonabelian Hodge correspondence, the existence of pluriharmonic metrics for good filtered flat bundles over smooth projective varieties play an important role,
see \cite{Mo2006,Mo2007,Mo2009,Mo2011}. There are many interesting and important literature concerning the existence of harmonic metrics
and related topics, see \cite{BGM2020,BK2019,Co1992,DMG2018,JY1991,La1991,PZZ2019,Zh2021} and the references therein.
\par Flat bundles occur in great abundance since it corresponds local systems and representations of fundamental groups.
It was observed by Loftin \cite{Lo2009} that for an affine manifold $M$, any flat bundle $(E,D)$ over $M$ corresponds a holomorphic
bundle $\tilde{E}$ over the tangent bundle $TM$. And then the classical Donaldson-Uhlenbeck-Yau theorem can be generalized to the affine case,
see \cite{BLK2013,Lo2009,SZZ2019}. Recently, Collins-Jacob-Yau \cite{CJY2019} found that the dimension reduction of the
Hermitian-Yang-Mills equation on $\tilde{E}$ is given by the Poisson metric equation on $(E,D)$. Here we say $H$ is a Poisson metric if
\begin{equation}\begin{split}\label{poissonequation}
D_H^\ast\psi_H=c_H\id_E,
\end{split}\end{equation}
where $c_H$ is a function on $M$. Motivated by this and the understanding of stable vector bundles on $K3$ surface in the large complex structure limit, Collins-Jacob-Yau \cite{CJY2019} focused on the research of Poisson metrics on flat vector bundles. Under some suitable assumptions, they related the existence of Poisson metrics on flat bundles with parabolic structures over punctured Riemann surfaces to certain notion of slope stability.
\subsection{Main results}
In the first part of this paper, we shall attack the existence of Poisson metrics on flat bundles over noncompact Riemannian manifolds of
arbitrary dimension. Below we state our first theorem, which can be interpreted as a noncompact version of the result of Donaldson and Corlette.
\begin{theorem}\label{thm1}
Let $(M,g)$ be a Riemannian manifold(not necessarily compact) satisfying the Assumptions \ref{assump1} and either Assumption
\ref{assump2} or complete with finite volume, $E$ be a vector bundle over $M$ equipped with a connection $D$ and a metric $K$ such that $|D_K^\ast\psi_K|\leq C\phi$ for a constant $C$. Then it admits a unique Poisson metric $H$ such that $\det h=1$, $|h|\in L^\infty$ and $|Dh|\in L^2$ for $h=K^{-1}H$, provided $D$ being flat
and $K$-analytically stable. Conversely, if $D$ is irreducible, it admits such a Poisson metric only if $D$ is $K$-analytically stable.
\end{theorem}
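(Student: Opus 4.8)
The plan is to establish existence by a nonlinear heat flow and to obtain the converse by a Chern--Weil computation. Writing $H=Kh$ with $h=K^{-1}H$ self-adjoint and positive with respect to $K$, I would first record the transformation law expressing $D_H^\ast\psi_H$ in terms of $D_K^\ast\psi_K$ and the covariant derivatives of $h$, which recasts \eqref{poissonequation} as a second-order elliptic equation for $h$ whose leading term is a Laplacian. Taking the trace of \eqref{poissonequation} forces $c_H=\tfrac1r\tr(D_H^\ast\psi_H)$ with $r=\rank E$, so $c_H$ is determined by $H$ and plays the role of a Lagrange multiplier for the constraint $\det h=1$. Accordingly I would study the evolution
\begin{equation*}
H^{-1}\frac{\pa H}{\pa t}=-2\bigl(D_H^\ast\psi_H-c_H\id_E\bigr),\qquad H(0)=K,
\end{equation*}
whose right-hand side is trace-free by construction, so that $\det h\equiv 1$ is preserved and the stationary points are precisely the desired Poisson metrics.

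Short-time existence is standard parabolic theory; the crux is long-time existence together with convergence, and both rest on a uniform $C^0$ bound for $h$. Here I would derive a Bochner-type inequality, schematically $\Delta|h|\le a|h|+b$ in the distributional sense, in which the inhomogeneous term is controlled using the hypothesis $|D_K^\ast\psi_K|\le C\phi$. Under Assumption \ref{assump1} together with Assumption \ref{assump2}, or under completeness with finite volume, one has at one's disposal either a suitable maximum principle or Moser iteration to convert this differential inequality into an estimate $|h|_{L^\infty}\le C'$; the finite-volume and weight hypotheses are exactly what discard the boundary contributions produced when integrating by parts on the noncompact $M$. Once $|h|$ is bounded, parabolic regularity yields higher-order bounds on compact exhausting domains, the flow exists for all time, and a subsequence converges locally smoothly to a limit solving \eqref{poissonequation} with $\det h=1$, $|h|\in L^\infty$ and $|Dh|\in L^2$.

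The main obstacle is precisely the passage from $K$-analytic stability to the $C^0$ bound above, since a priori the flow could develop an unbounded $|h|$. The remedy, which I would adapt to the Poisson setting, is to argue by contradiction: if $\sup_M|h(t)|\to\infty$, one normalizes and extracts a weak $L^2_1$-limit of the eigenspace projections of $h$, and by the Uhlenbeck--Yau regularity theorem \cite{UY1986} for weakly $D$-invariant projections this limit defines a coherent proper $D$-invariant subsheaf $S\subsetneq E$. Tracking the Donaldson-type energy along the flow, the blow-up forces $\deg_K(S)/\rank(S)\ge\deg_K(E)/\rank(E)$, contradicting stability. Controlling the boundary terms in this energy identity on the noncompact manifold is the delicate point, and is where Assumptions \ref{assump1}--\ref{assump2} and the bound $|D_K^\ast\psi_K|\le C\phi$ are used decisively.

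For uniqueness I would take two solutions $H_1,H_2$ with the stated properties, set $h=H_1^{-1}H_2$, and show using both Poisson equations that $\tr h+\tr h^{-1}$ is a bounded subsolution of the Laplacian with $L^2$ gradient; the finite-volume or weighted setting then forces $h$ to be $D$-parallel, and the simplicity implied by stability makes $h$ scalar, so $\det h=1$ gives $h=\id_E$. Finally, for the converse, assuming $D$ irreducible with a Poisson metric $H$, I would feed the orthogonal projection $\pi_S$ onto any proper $D$-invariant saturated subsheaf $S$ into the Chern--Weil formula for $\deg_K(S)$. The Poisson equation replaces the mean-curvature term by $c_H\id_E$, whose $S$-trace integrates to the ambient slope, while the second-fundamental-form term contributes $-\|D''\pi_S\|_{L^2}^2\le 0$, yielding the slope inequality; were equality to hold, $\pi_S$ would be covariantly constant, a nontrivial parallel section of $\End(E)$ contradicting irreducibility, so the inequality is strict and $D$ is $K$-analytically stable.
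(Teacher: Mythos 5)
Your proposal has the right broad architecture (a Simpson-type blow-up argument for the $C^0$ bound, Chern--Weil for the converse), but it contains a genuine conceptual gap in the converse, and the same gap propagates into your uniqueness step. Your Chern--Weil computation establishes stability of $D$ with respect to the Poisson metric $H$, not with respect to the background metric $K$, which is what the theorem asserts. In the Chern--Weil formula $(\ref{Chernweilbundle})$ for $\deg_g(S,D,K)$ the mean-curvature term is $\tr(\pi_K\circ D_K^\ast\psi_K)$; the Poisson equation says nothing about $D_K^\ast\psi_K$ and only replaces $D_H^\ast\psi_H$ by $c_H\id_E$, so what your argument actually yields is the $H$-slope inequality --- this is the paper's Proposition \ref{polystable} combined with irreducibility. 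On a compact manifold that would suffice, since $\deg_g(S,D,K)-\deg_g(S,D,H)=\tfrac12\int_M\Delta\log\det h_S\,\dvol_g=0$ by Stokes; on a noncompact manifold this integral need not vanish even for mutually bounded metrics, so the two analytic degrees of a $D$-invariant subbundle can genuinely differ. The heart of the paper's proof of the second statement is precisely this comparison: using $|h|\in L^\infty$, $|Dh|\in L^2$ and Simpson's Lemma 5.2 (or Yau's lemma) --- which is exactly where Assumption \ref{assump2} or completeness with finite volume is consumed --- it shows that for every $D$-invariant $S$ either $\deg_g(S,D,K)=-\infty$ or $\deg_g(S,D,K)=\deg_g(S,D,H)$, and only then transfers $H$-stability to $K$-stability. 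Your proposal omits this step entirely; likewise, your uniqueness claim that ``simplicity implied by stability makes $h$ scalar'' needs to rule out a nontrivial $D$-parallel endomorphism using \emph{$K$-analytic} stability, and since its eigenbundles are orthogonal for the Poisson metrics but not for $K$, this again requires the degree comparison (the paper reruns it inside Proposition \ref{uniqueness1}, under the extra hypothesis $\tr D_K^\ast\psi_K\in L^1$).

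There are also problems on the existence side. You propose running the flow globally on $M$, but the theorem's setting includes \emph{incomplete} manifolds (Example \ref{exam2}: a Zariski open subset of a compact K\"ahler manifold with the restricted metric), where short-time existence, uniqueness, and any maximum principle for a boundary-condition-free parabolic flow are not ``standard parabolic theory.'' The paper avoids this entirely by solving Dirichlet problems on compact exhaustion domains $M_s$ with $H_s|_{\pa M_s}=K$ (Propositions \ref{Dirichlet1} and \ref{Dirichlet2}) and letting $s\to\infty$; the boundary condition is not incidental, since it forces $\pa\log(\tr h_s/\rank(E))/\pa\vec{n}\geq0$ on $\pa M_s$, which is what allows $\log(\tr h_s/\rank(E))$ to extend by zero to a distributional solution of $\Delta f\geq-2C\phi$ on all of $M$ --- exactly the input Assumption \ref{assump1} requires. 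Your blow-up argument would have to be run on these exhaustion domains through the integral identity of Proposition \ref{keyproposition}; along a global flow on a noncompact (possibly incomplete) manifold, the key integrations by parts have no justification. Finally, a smaller point: with the paper's conventions the parabolic flow is $H^{-1}\pa_t H=+2D_H^\ast\psi_H$ as in $(\ref{flow1})$ (by $(\ref{DHstarpsiHt})$ its linearization is $-D_H^\ast D_H$, i.e.\ forward heat), so the sign in your evolution equation makes it backward parabolic and ill-posed.
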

For the existence statement, we actually only need the Assumption \ref{assump1} and the condition $|(D_K^\ast\psi_K)^\perp|\leq C\phi$, where
$(D_K^\ast\psi_K)^\perp$ is the trace-free part of $D_K^\ast\psi_K$, see Proposition \ref{poissonexistence} for details. The Assumption \ref{assump1} is introduced by Mochizuki \cite{Mo2020} for K\"{a}hler manifolds and the volume can be infinite. The notion of analytically stability in Theorem \ref{thm1} will be introduced in Section 2.2. If $M$ is compact, we know that $D$ being analytically stable(poly-stable) if and only if it is simple(semi-simple), while the correct substitution should be replaced by analytically stability in noncompact setting. Moreover, if $(M,g)$ be a K\"{a}hler manifold and $D$ being flat, the definition coincides with that in Simpson's paper \cite{Si1990}, see Remark \ref{kahlerstability}.
\par The second statement in Theorem \ref{thm1} indicates that $D$ is analytically stable, not only with respect to the Poisson metric,
but also the background metric $K$. It may be mentioned that it is facilitated by the feature of the stability of vector bundles, while the
issue is more complicated in the setting of holomorphic bundles. We will use Mochizuki's exhaustion method in \cite{Mo2020}, where he proved
the existence of exhaustion function $f$. Denote by $M_s$ the set where $f\leq s$ and we firstly consider the parabolic heat flow
$(\ref{flow1})$ on $M_s$, which can be regard as the vector bundle analogy of Eells-Sampson's flow for harmonic maps or Donaldson's heat flow \cite{Do1985,Do1992}. For arbitrary $D$, we show the existence and uniqueness of the long-time solution to $(\ref{flow1})$, also the Dirichlet boundary problems for harmonic metric and Poisson metric are uniquely solvable, see Proposition \ref{Dirichlet1} and Proposition \ref{Dirichlet2}.
Next we observe in Proposition \ref{keyproposition} that the crucial identity $(\ref{outlinekeyidentity})$, based on which serval core a prior estimates are established. Once the zeroth estimate of a sequence of the solutions to the Dirichlet boundary problem is obtained,
the evolved metrics are completely controlled and will converge to a Poisson metric. Finally, we prove the uniqueness and the second statement by comparing the analytically stabilities with respect to different reference fiber metrics.
\par We mention that Corlette \cite{Co1988} studied the following heat flow
\begin{equation}\begin{split}
\frac{\pa D_t}{\pa t}=-D_tD_{t,K}^\ast \psi_{D_t,K},
\end{split}\end{equation}
on flat complex vector bundles $(E,D_0)$ over compact Riemannian manifolds, where $D_t$ is a time-dependent family of connections
in the orbit of $D_0$. Corlette proved the convergence of $D_t$ at $t=\infty$ with the property $D_{\infty,K}\psi_{D_\infty,K}=0$ and
if $(E,D_0)$ is simple, $D_\infty$ must lie in the complex gauge orbit of $D_0$. Corlette's proof relies on Uhlenbeck's deep compactness
theorem \cite{Uh1982} and cannot be generalized to noncompact setting directly. Even in compact case, here the discussion is different
from that in \cite{Co1988} and we bypass Uhlenbeck's compactness theorem.
\par As a corollary of $(\ref{DHstarpsiHt})$ and Theorem \ref{thm1}, we have
\begin{corollary}\label{cor1}
Under the same assumptions in Theorem \ref{thm1} and $\tr D_K^\ast\psi_K=0$, there exist a unique harmonic metric $H$
such that $\det h=1$, $|h|\in L^\infty$ and $|Dh|\in L^2$ for $h=K^{-1}H$, provided $D$ being flat and $K$-analytically stable.
Conversely, $D$ is $K$-analytically stable if it is irreducible and admits such a harmonic metric.
\end{corollary}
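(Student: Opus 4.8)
The plan is to obtain the harmonic metric directly from the Poisson metric furnished by Theorem~\ref{thm1}, using the hypothesis $\tr D_K^\ast\psi_K=0$ to force the function $c_H$ to vanish. First I would invoke Theorem~\ref{thm1}: since $D$ is flat and $K$-analytically stable, there is a unique Poisson metric $H$ with $\det h=1$, $|h|\in L^\infty$ and $|Dh|\in L^2$, where $h=K^{-1}H$. By the defining equation $(\ref{poissonequation})$ this metric satisfies $D_H^\ast\psi_H=c_H\id_E$ for some function $c_H$ on $M$, and it suffices to prove $c_H\equiv0$.

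Taking the trace of the Poisson equation gives $(\rank E)\,c_H=\tr(D_H^\ast\psi_H)$. Because the trace annihilates commutators, the induced derivative $D_H$ on $\End(E)$ descends to the ordinary exterior derivative on traces, and since the Hodge star acts only on the form factor, one obtains the pointwise identity $\tr(D_H^\ast\psi_H)=d^\ast(\tr\psi_H)$, where $d^\ast$ is the codifferential on scalar $1$-forms. The key point is that $\tr\psi_H$ is governed entirely by the metric induced on $\det E$: the splitting $D=D_H+\psi_H$ descends to $\det E$ as $\tr D=\tr D_H+\tr\psi_H$, in which $\tr D_H$ preserves the induced metric $\det H$ and $\tr\psi_H$ is the self-adjoint remainder. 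Since $\det h=1$ forces $\det H=\det K$, uniqueness of this splitting on the line bundle $\det E$ gives $\tr\psi_H=\tr\psi_K$, and therefore $\tr(D_H^\ast\psi_H)=\tr(D_K^\ast\psi_K)$; this is precisely the identity $(\ref{DHstarpsiHt})$. Hence the hypothesis $\tr D_K^\ast\psi_K=0$ yields $c_H\equiv0$, so $(\ref{poissonequation})$ collapses to the harmonic equation $(\ref{harmonicequation})$ and $H$ is harmonic. Uniqueness, together with $\det h=1$, $|h|\in L^\infty$ and $|Dh|\in L^2$, is inherited verbatim from Theorem~\ref{thm1}.

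For the converse, a harmonic metric $H$ of the stated type is in particular a Poisson metric with $c_H=0$, so when $D$ is irreducible the converse direction of Theorem~\ref{thm1} immediately yields that $D$ is $K$-analytically stable. As the entire argument is a formal consequence of Theorem~\ref{thm1} and the identity $(\ref{DHstarpsiHt})$, there is no substantial analytic difficulty here; the only step deserving care is the implication $\det h=1\Rightarrow\tr\psi_H=\tr\psi_K$, that is, confirming that the self-adjoint part of the connection induced on $\det E$ depends solely on the fiber metric of $\det E$ and not on the full metric $H$ on $E$.
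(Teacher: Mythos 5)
Your proposal is correct and is essentially the paper's own argument: apply Theorem \ref{thm1} to obtain the unique Poisson metric with $\det h=1$, $|h|\in L^\infty$, $|Dh|\in L^2$, use the trace identity $\tr D_H^\ast\psi_H-\tr D_K^\ast\psi_K=\frac{1}{2}\Delta\log\det h=0$ (which your pointwise computation $\tr\psi_H=\tr\psi_K$ on $\det E$ reproves) to conclude $c_H\equiv 0$, and read the converse off the converse clause of Theorem \ref{thm1}. One small label correction: the identity you invoke is the trace of $(\ref{DHKstarpsiHK})$, not $(\ref{DHstarpsiHt})$ (the latter is the variation formula along a one-parameter family of metrics), though the paper's own citation makes the same slip.
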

\par By $(\ref{DHKstarpsiHK})$, we know that
\begin{equation}\begin{split}
\tr D^\ast_H\psi_H-\tr D^\ast_K\psi_K
&=-\frac{1}{2}\tr D_K^\ast(h^{-1}\delta_Kh)
=\frac{1}{2}\Delta\log\det h,
\end{split}\end{equation}
where $h=K^{-1}H$ and $\delta_K=D_K-\psi_K$. So by solving a scalar Poisson equation(Theorem 4.3 in \cite{WZ2011}) and employing Theorem \ref{thm1},
it is easy to conclude
\begin{corollary}\label{cor2}
Let $(M,g)$ be a Riemannian manifold satisfying the Assumption \ref{assump1} and Assumption \ref{assump2'} with $\phi=1$, $E$ be a vector bundle
over $M$ equipped with a connection $D$ and a metric $K$ such that $|D_K^\ast\psi_K|\in L^\infty$, $|\psi_K|\in L^2$. Then it admits a unique harmonic metric $H$ such
that $\det h=1$, $|h|\in L^\infty$ and $|Dh|\in L^2$ for $h=K^{-1}H$, provided $D$ being flat and $K$-analytically stable. Conversely,
$D$ is $K$-analytically stable if it is irreducible and admits such a harmonic metric.
\end{corollary}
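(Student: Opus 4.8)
The plan is to reduce Corollary~\ref{cor2} to Theorem~\ref{thm1} via Corollary~\ref{cor1}. The key observation is that Corollary~\ref{cor1} already produces a \emph{harmonic} metric once one has the condition $\tr D_K^\ast\psi_K=0$ together with the hypotheses of Theorem~\ref{thm1}. So the strategy is: starting from an arbitrary reference metric $K$ satisfying only $|D_K^\ast\psi_K|\in L^\infty$ and $|\psi_K|\in L^2$, I would first modify $K$ by a conformal-type change so that the trace of $D_K^\ast\psi_K$ is killed, and then invoke Corollary~\ref{cor1}.

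First I would produce a new reference metric $\widetilde K=e^{2u}K$ (acting by a scalar factor, equivalently rescaling the flat bundle's metric by a positive function $e^{2u}$) and compute how the trace transforms. By the displayed identity preceding Corollary~\ref{cor2}, namely
\begin{equation*}
\tr D^\ast_H\psi_H-\tr D^\ast_K\psi_K=\frac{1}{2}\Delta\log\det h,
\end{equation*}
with $h=K^{-1}H$, the effect of replacing $K$ by $\widetilde K$ on the trace $\tr D^\ast_{\widetilde K}\psi_{\widetilde K}$ is governed by a scalar Laplacian applied to $\log\det(\widetilde K^{-1}K)$, which here is a constant multiple of $u$. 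Thus arranging $\tr D^\ast_{\widetilde K}\psi_{\widetilde K}=0$ amounts to solving the scalar Poisson equation
\begin{equation*}
\Delta u = -\,c\,\tr D_K^\ast\psi_K
\end{equation*}
for an appropriate constant $c$, where the right-hand side is in $L^\infty$ by hypothesis. Since $(M,g)$ satisfies Assumption~\ref{assump1} and Assumption~\ref{assump2'} with $\phi=1$, I would invoke the scalar Poisson solvability result (Theorem~4.3 in \cite{WZ2011}) to obtain a solution $u$ with the regularity and decay needed so that $\widetilde K$ still satisfies the structural hypotheses. In particular I must check that $|D_{\widetilde K}^\ast\psi_{\widetilde K}|\leq C\phi=C$ and that the analytic-stability of $D$ is preserved: since the scalar rescaling only shifts degrees by a global constant and does not alter the sub-bundle filtration, $K$-analytic stability and $\widetilde K$-analytic stability coincide.

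With $\widetilde K$ in hand satisfying $\tr D_{\widetilde K}^\ast\psi_{\widetilde K}=0$ and $|D_{\widetilde K}^\ast\psi_{\widetilde K}|\in L^\infty$, I would apply Corollary~\ref{cor1} (with reference metric $\widetilde K$) to obtain a harmonic metric $H$ with $\det(\widetilde K^{-1}H)=1$, $|\widetilde K^{-1}H|\in L^\infty$ and $|D(\widetilde K^{-1}H)|\in L^2$. The final bookkeeping step is to translate these normalizations from $\widetilde K$ back to the original $K$: writing $h=K^{-1}H=(K^{-1}\widetilde K)(\widetilde K^{-1}H)$ and using that $K^{-1}\widetilde K=e^{2u}\id_E$ is a bounded scalar factor with bounded derivative (from the regularity of $u$), the conditions $|h|\in L^\infty$ and $|Dh|\in L^2$ follow, while the determinant normalization $\det h=1$ is arranged by a further global constant rescaling. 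The uniqueness statement and the converse (that $K$-analytic stability is necessary when $D$ is irreducible and admits such a harmonic metric) transfer verbatim from Corollary~\ref{cor1}, again using that stability is insensitive to the scalar change $K\leftrightarrow\widetilde K$.

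The main obstacle I expect is the regularity and growth control of the solution $u$ to the Poisson equation: I must ensure that Theorem~4.3 of \cite{WZ2011} delivers a $u$ whose gradient is controlled well enough that multiplying by $e^{2u}$ does not destroy the $L^\infty$ bound on $|h|$ or the $L^2$ bound on $|Dh|$, and that $\widetilde K$ remains an admissible reference metric for the hypotheses of Theorem~\ref{thm1}. This is precisely where Assumption~\ref{assump2'} with $\phi=1$ and the hypothesis $|\psi_K|\in L^2$ enter: the former guarantees solvability with the right integrability, and the latter feeds into the $L^2$ estimate for $|Dh|$ after the change of reference metric.
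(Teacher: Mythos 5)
Your proposal follows essentially the same route as the paper: the paper's entire proof of Corollary \ref{cor2} consists of the displayed trace identity plus the sentence ``solve a scalar Poisson equation (Theorem 4.3 in \cite{WZ2011}) and employ Theorem \ref{thm1}'', which is exactly what you spelled out --- conformally modify $K$ to $\widetilde K=e^{2u}K$ with $\Delta u=-\frac{1}{\rank(E)}\tr D_K^\ast\psi_K$, observe that then $D_{\widetilde K}^\ast\psi_{\widetilde K}=(D_K^\ast\psi_K)^\perp$ is trace-free and still bounded, that all slopes shift by the common finite constant $\int_M\Delta u\,\dvol_g$ (finite because $\phi=1\in L^1$ forces $\vol(M)<\infty$), so $K$-stability and $\widetilde K$-stability agree, and then apply Corollary \ref{cor1} with reference metric $\widetilde K$. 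One point you should make explicit: Assumption \ref{assump2'} together with the finite volume implies Assumption \ref{assump2} (bounded $\Delta\rho$ is automatically $L^1$), which is what makes Theorem \ref{thm1}, Corollary \ref{cor1} and the uniqueness Proposition \ref{uniqueness1} applicable here.

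There is, however, one step that fails as literally written: your claim that ``the determinant normalization $\det h=1$ is arranged by a further global constant rescaling.'' The harmonic metric you obtain satisfies $\det(\widetilde K^{-1}H)=1$, hence $\det(K^{-1}H)=e^{2\rank(E)u}$, a genuinely non-constant factor that no constant rescaling can remove; moreover a rescaling $H\mapsto He^{w}$ preserves harmonicity only when $\Delta w=0$, so you cannot trade it away. Indeed, by the very trace identity you quote, a harmonic $H$ with $\det(K^{-1}H)\equiv1$ would force $\tr D_K^\ast\psi_K=0$, so the conclusion of Corollary \ref{cor2}, read literally against the original $K$, is unattainable whenever $\tr D_K^\ast\psi_K\neq0$. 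This is a defect of the corollary's statement rather than of your strategy: the normalization $\det h=1$ has to be understood with respect to the modified reference metric $\widetilde K$ (equivalently, $\det(K^{-1}H)$ equals the fixed bounded function $e^{2\rank(E)u}$), which is precisely what both your argument and the paper's sketch actually deliver. With that reading, your transfer of $|h|\in L^\infty$ and $|Dh|\in L^2$ back to the original $K$ is fine, provided you record that the bounded solution $u$ also satisfies $|du|\in L^2$ --- this follows from $u,\Delta u\in L^\infty$, the finite volume, and a cutoff argument built from the exhaustion function of Assumption \ref{assump2'}, rather than from the hypothesis $|\psi_K|\in L^2$, which plays no role in this existence argument.
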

There is a remark on the situation that there is no stability(simple) assumption. For an arbitrary connection on the vector bundle over a compact Riemannian manifold, there is no obstruction to the existence of $L^\infty$-approximate solutions of $(\ref{harmonicequation})$, see Proposition \ref{approximateharmonic}.
\par In the second part of this paper, we shall discuss related applications and consequences after Theorem \ref{thm1}. Recall a Higgs bundle
$(E,\ol\pa_E,\theta)$ over a complex manifold $X$ consists of a holomorphic vector bundle $(E,\ol\pa_E)$ over $X$ together with holomorphic section
$\theta\in\Omega^{1,0}(\End(E))$ such that $\theta\wedge\theta=0$. For a Hermitian metric $K$, we consider the
Hitchin-Simpson connection
\begin{equation}\begin{split}
D_{\ol\pa_E,\theta,K}=\pa_{\theta,K}+\ol\pa_{E,\theta}, \pa_{\theta,K}=\pa_K+\theta^{\ast K}, \ol\pa_{E,\theta}=\ol\pa_E+\theta,
\end{split}\end{equation}
where $D_{\ol\pa_E,K}=\pa_K+\ol\pa_E$ is the Chern connection. If the curvature $F_{\ol\pa_E,\theta,K}=D_{\ol\pa_E,\theta,K}^2$ vanishes, we will call
$(\ol\pa_E,\theta,K)$ a Higgs flat structure on the underlying bundle $E$. In particular, if the Chern curvature $F_{\ol\pa_E,K}=D_{\ol\pa_E,K}^2$
vanishes(that is, $\theta=0$), we then say $(\ol\pa_E,K)$ a flat holomorphic structure.
\par As an application of Theorem \ref{thm1}, we have
\begin{theorem}\label{thm2}
Let $(X,\omega)$ be a K\"{a}hler manifold satisfying the Assumption \ref{assump1} and $(E,D)$ be a $K$-analytically stable flat bundle over $X$
with $|(D_K^\ast\psi_K)^\perp|\leq C\phi$ for a constant $C$, then
\begin{enumerate}
\item If $(X,\omega)$ is a complex curve, there is a Higgs flat structure on $E$.
\item If $(X,\omega)$ is complete with bounded Ricci curvature from below and $|\psi_K|\in L^2$, there is a Higgs flat
structure on $E$.
\item If $(X,\omega)$ is complete with nonnegative Ricci curvature and $|\psi_K|\in L^2$, there is a flat holomorphic structure on $E$.
\end{enumerate}
\end{theorem}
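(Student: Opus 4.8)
The plan is to extract the desired structure from a Poisson metric, to convert the Poisson equation into a genuine harmonic one by a determinant adjustment, and then to run a Bochner--Siu--Sampson argument forcing the integrability of the associated Higgs field. First I would apply Proposition~\ref{poissonexistence}, whose hypotheses are exactly Assumption~\ref{assump1} together with the bound $|(D_K^\ast\psi_K)^\perp|\le C\phi$ assumed here, to obtain a Poisson metric $H$ with $\det h=1$, $|h|\in L^\infty$ and $|Dh|\in L^2$, where $h=K^{-1}H$. Writing $D=D_H+\psi_H$ and setting $\bar\partial_E:=D_H^{0,1}$ and $\theta:=\psi_H^{1,0}$, the Hitchin--Simpson connection $D_{\bar\partial_E,\theta,H}$ coincides tautologically with $D$, so its curvature vanishes automatically. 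Hence the common core of all three statements is that $(E,\bar\partial_E,\theta)$ be a \emph{genuine} Higgs bundle, i.e.\ $\bar\partial_E^2=0$, $\bar\partial_E\theta=0$ and $\theta\wedge\theta=0$ (case (3) requiring in addition $\theta=0$). Since the $(0,2)$-part of flatness gives $\bar\partial_E^2=F^{0,2}_{D_H}=-\psi_H^{0,1}\wedge\psi_H^{0,1}$, the first condition is a consequence of the third, and everything reduces to $\bar\partial_E\theta=0$ and $\theta\wedge\theta=0$.

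Next I would remove the trace obstruction. Combining the $(1,1)$-part $\bar\partial_H\psi_H^{1,0}+\partial_H\psi_H^{0,1}=0$ of flatness with the K\"ahler identities yields $D_H^\ast\psi_H=2\sqrt{-1}\,\Lambda\bar\partial_E\theta$, so the Poisson equation forces $\Lambda\bar\partial_E\theta=\tfrac{c_H}{2\sqrt{-1}}\,\id_E$; in particular $\theta$ fails to be holomorphic exactly by the multiple $c_H$ of the identity, and $\theta$ is holomorphic if and only if $c_H=0$. I would therefore pass to the conformal metric $\hat H=e^{2v}H$. A scalar conformal change shifts the self-adjoint part by $\psi_{\hat H}=\psi_H-(dv)\,\id_E$, leaves the induced connection and the inner product on $\End(E)$ unchanged, and therefore leaves the trace-free part of $D_H^\ast\psi_H$ untouched while shifting its trace according to \eqref{DHKstarpsiHK}; one obtains $D_{\hat H}^\ast\psi_{\hat H}=(c_H+\Delta v)\,\id_E$. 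Solving the scalar Poisson equation $\Delta v=-c_H$ (Theorem~4.3 of \cite{WZ2011}; this is the step where the curve hypothesis in (1), respectively completeness with Ricci bounded below and $|\psi_K|\in L^2$ in (2) and (3), is needed to guarantee solvability) makes $\hat H$ harmonic, $D_{\hat H}^\ast\psi_{\hat H}=0$, without disturbing the $L^\infty$ and $L^2$ control on $h$.

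With a harmonic metric in hand I would establish pluriharmonicity through an integral Bochner identity. For a complex curve (case (1)) there are no $(2,0)$ or $(0,2)$ forms, so $\theta\wedge\theta=0$ and $\bar\partial_E^2=0$ hold trivially, while $\Lambda\bar\partial_E\theta=0$ now forces $\bar\partial_E\theta=0$ outright, yielding the Higgs flat structure. In the higher-dimensional case (2) I would integrate the pointwise Weitzenb\"ock formula bounding $|\bar\partial_E\theta|^2+|\theta\wedge\theta|^2$ against a sequence of Mochizuki cutoff functions, using completeness, the Ricci lower bound, and $|\psi_K|,|Dh|\in L^2$ to discard the boundary terms and absorb the curvature term; harmonicity makes the remaining bulk term vanish, giving $\bar\partial_E\theta=0$ and $\theta\wedge\theta=0$, hence the Higgs flat structure. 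For case (3) the same identity with $\Ric\ge 0$ gives the Bochner term a definite sign, so that together with $|\psi_K|\in L^2$ it forces $\psi_{\hat H}\equiv 0$; thus $\theta=0$, the connection $D=D_{\hat H}$ is flat unitary, and $(\bar\partial_E,\hat H)$ is a flat holomorphic structure.

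The main obstacle is this final Bochner step in the noncompact setting. On a compact K\"ahler manifold harmonicity automatically implies pluriharmonicity, but here the integration by parts producing $\bar\partial_E\theta=0$ and $\theta\wedge\theta=0$ must be carried out by hand, and the boundary contributions created by the cutoffs have to be absorbed using precisely the decay recorded in $|\psi_K|\in L^2$, $|Dh|\in L^2$ and the curvature hypotheses; extracting the strict vanishing $\theta=0$ from the Ricci-nonnegative inequality in case (3) is the sharpest point. By comparison the determinant reduction and the tautological flatness of the Hitchin--Simpson connection are formal.
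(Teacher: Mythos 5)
Your first step (invoking Proposition~\ref{poissonexistence} to get the Poisson metric $H$) and your overall Bochner--Siu--Sampson philosophy agree with the paper, but the pivot of your argument --- the conformal change $\hat H=e^{2v}H$ with $\Delta v=-c_H$, converting the Poisson metric into a harmonic one --- is a genuine gap. Under the hypotheses of Theorem~\ref{thm2} only the \emph{trace-free} part $(D_K^\ast\psi_K)^\perp$ is controlled; the trace part is completely unconstrained, and since $\det h=1$ the identity displayed after Corollary~\ref{cor1} gives $c_H=\tr D_K^\ast\psi_K/\rank(E)$, an essentially arbitrary function with no decay or integrability. On a noncompact manifold the scalar equation $\Delta v=-c_H$ is then not solvable in any useful sense: Theorem~4.3 of \cite{WZ2011}, which you cite, is precisely the ingredient of Corollary~\ref{cor2} and requires Assumption~\ref{assump2'} together with $|D_K^\ast\psi_K|\in L^\infty$, neither of which is assumed in Theorem~\ref{thm2}; and being a curve (case (1)) or having a Ricci bound (cases (2), (3)) gives no solvability for a scalar Poisson equation --- your attribution of the geometric hypotheses to this step is not correct. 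Even when a solution happens to exist (e.g.\ $c_H$ a nonzero constant on $\mathbb{C}$, where $v$ grows quadratically), it is generally unbounded, so $\hat H$ is no longer mutually bounded with $K$ and the $L^2$ control on $\psi_{\hat H}$ and $Dh$ that your subsequent cutoff/Stokes arguments and your case-(3) Liouville step require is destroyed. Everything after this step therefore rests on an unavailable PDE.

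The paper never makes the metric harmonic; it changes the Higgs field instead of the metric, taking $\theta=\psi_H^{\perp,1,0}$, the trace-free part. The Poisson equation says exactly $D_H^\ast\psi_H^\perp=(D_H^\ast\psi_H)^\perp=0$, while flatness gives $D_H\psi_H=0$, hence $d\tr\psi_H=\tr D_H\psi_H=0$ and $D_H\psi_H^\perp=0$, so no scalar PDE is ever solved. On a curve this immediately yields $D_H^{0,1}\psi_H^{\perp,1,0}=0$ (Proposition~\ref{Higgsstructure1}); in higher dimensions Proposition~\ref{Higgsstructure2} runs the Siu--Sampson/Simpson argument on the trace-free pseudo-curvature $G_H^\perp$, using $\sqrt{-1}\Lambda_\omega G_H^\perp=\tfrac{1}{2}(D_H^\ast\psi_H)^\perp=0$, the Bochner estimate with cutoffs to get $\nabla_H\psi_H^\perp\in L^2$, the identity $\tr(G_H^\perp\wedge G_H^\perp)\wedge\omega^{n-2}/(n-2)!=|G_H^\perp|_H^2\dvol_\omega$ rewritten as an exact form with $L^1$ potential, and Yau's Stokes lemma; case (3) then uses subharmonicity of $|\psi_H^\perp|_H$, Yau's Liouville theorem and the infinite volume of a complete Ricci-nonnegative manifold (a step you gloss over) to get $\psi_H^\perp=0$. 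The resulting Hitchin--Simpson connection is $D_H+\psi_H^\perp=D-\tfrac{\tr\psi_H}{\rank(E)}\id_E$, which is flat because $\tr\psi_H$ is closed --- it is \emph{not} $D$ itself, and the theorem does not require it to be. Your insistence that the Hitchin--Simpson connection coincide tautologically with $D$ is exactly what forces the unsolvable equation $\Delta v=-c_H$; replacing that step by the passage to $\psi_H^\perp$ repairs the argument and reduces it to the paper's proof.
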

Especially, the third statement of the conclusions in Theorem \ref{thm2} indicates there is a metric compatible flat connection on $E$.
Finding metric compatible flat connections may be of interest in its own right. By Corlette's work in \cite{Co1988},
it is easy to see the same conclusion also holds for a flat bundle over a compact Riemannian manifold with nonnegative Ricci curvature and
so it follows that the Euler class $e(E)$ of $E$ is zero.
\par In order to prove Theorem \ref{thm2}, we need to construct Higgs structures by employing Theorem \ref{thm1}, see Proposition
\ref{Higgsstructure1} and Proposition
\ref{Higgsstructure2}, which is a key step to establish the Corlette-Donaldson-Hitchin-Simpson's correspondence. If $X$ is compact and $H$ being harmonic,
this is equivalent to determine whether $H$ being pluriharmonic, the issue does not appear and it is essentially due to Siu-Sampson \cite{Sa1986,Si1980}
 and Corlette \cite{Co1988}, see also \cite{Si1992}. If $X$ is noncompact, see \cite{JZ1997} for case that the complement of a divisor in a compact
 K\"{a}hler manifold.
\par Secondly, we will focus on generalizing the Corlette-Donaldson-Hitchin-Simpson's correspondence to noncompact K\"{a}hler manifolds setting. Following Simpson \cite{Si1988}, if $(\ol\pa_E,\theta)$ is a Higgs structure on $E$, the $K$-analytically degree is defined by
\begin{equation}\begin{split}
\deg_\omega(E,K)=\int_X\sqrt{-1}\tr\Lambda_\omega F_{\ol\pa_E,\theta,K}\dvol_\omega.
\end{split}\end{equation}
If $\mathcal{V}\subseteq\mathcal{O}_X(E)$ is a sub-sheaf, then it can be seen as a sub-bundle $V$ outside a singular
set $\Sigma_{\mathcal{V}}$ of codimension at least two. Let $\pi_K$ denotes the projection onto $V$ and $K$ restricts to a metric
on $V$ so that we can define $\deg_\omega(\mathcal{V},K)$ by integrating outside $\Sigma_\mathcal{V}$. We have the Chern-Weil formula written in terms of $\Lambda_\omega F_{\ol\pa_E,\theta,K}$,
\begin{equation}\begin{split}\label{ChernHiggs}
\deg_\omega(\mathcal{V},K)=\int_{X\setminus\Sigma_\mathcal{V}}\left(\sqrt{-1}\tr(\pi_K\circ\Lambda_\omega F_{\ol\pa_E,\theta,K})-|\ol\pa_{E,\theta}\pi_{K}|_K^2\right)\dvol_\omega.
\end{split}\end{equation}
$(\ol\pa_E,\theta)$ is called $K$-analytically stable if for any proper sub-Higgs sheaf $\mathcal{V}\subset\mathcal{O}_X(E)$, it holds
\begin{equation}\begin{split}
\frac{\deg_\omega(\mathcal{V},K)}{\rank(\mathcal{V})}<\frac{\deg_\omega(E,K)}{\rank(E)}.
\end{split}\end{equation}
\par From now on, we fix a Hermitian metric $K$ on the vector bundle $E$,  write
\begin{itemize}
\item $\mathcal{M}_{Flat,K}$: the set of isomorphic classes of $K$-analytically stable and irreducible
flat structures such that $|(D_K^\ast\psi_K)^\perp|\in L^1$.
\item $\mathcal{M}_{Higgs,K}$: the set of isomorphic classes of $K$-analytically stable and irreducible
Higgs structures with $c_1(E,K)=0$ and $|\Lambda_\omega F_{\ol\pa_E,\theta,K}|\in L^1$.
\item $\mathcal{M}_{Flat,K,\phi}$: the set of isomorphic classes of $K$-analytically stable and irreducible
flat structures such that $|(D_K^\ast\psi_K)^\perp|\leq C\phi$ for a constant $C$.
\item $\mathcal{M}_{Higgs,K,\phi}$: the set of isomorphic classes of $K$-analytically stable and irreducible
Higgs structures with $c_1(E,K)=0$ and $|\Lambda_\omega F_{\ol\pa_E,\theta,K}|\leq C\phi$ for a constant $C$.
\end{itemize}
\par In the above, isomorphic means that they are in the the same $\mathcal{G}$-orbit, where $\mathcal{G}$ denotes the set of $L^\infty$-sections of
$\Aut(E)$. We say a Higgs structure $(\ol\pa_E,\theta)$ is irreducible if $D_{\ol\pa_E,\theta,H}$ is irreducible, for any Hermitian-Einstein metric $H$ with $\det h=1$ and $|h|\in L^\infty$ for $h=K^{-1}H$.
\par Our third theorem can be stated as the follows.
\begin{theorem}\label{thm3}
Assume $(X,\omega)$ be a noncompact complex curve satisfying the Assumption \ref{assump1} and either Assumption
\ref{assump2} or complete with finite volume, then we have
\begin{enumerate}
\item There is map from $\mathcal{M}_{Flat,K,\phi}$ to $\mathcal{M}_{Higgs,K}$.
\item There is map from $\mathcal{M}_{Higgs,K,\phi}$ to $\mathcal{M}_{Flat,K}$.
\end{enumerate}
\end{theorem}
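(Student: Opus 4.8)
The plan is to construct both maps through the noncompact existence theory established above, exploiting throughout that $X$ is a complex curve to upgrade Einstein/harmonic conditions to genuine flatness. Consider first assertion (1). Let $D$ represent a class in $\mathcal{M}_{Flat,K,\phi}$, so $D$ is flat, irreducible, $K$-analytically stable and $|(D_K^\ast\psi_K)^\perp|\leq C\phi$. By Theorem \ref{thm1} (equivalently Proposition \ref{poissonexistence}) there is a unique Poisson metric $H$ with $\det h=1$, $|h|\in L^\infty$ and $|Dh|\in L^2$ for $h=K^{-1}H$; and by Theorem \ref{thm2}(1) (see Proposition \ref{Higgsstructure1}) the pair $(\ol\pa_E,\theta):=(D_H^{0,1},\psi_H^{1,0})$ is a Higgs structure whose Hitchin-Simpson connection satisfies $F_{\ol\pa_E,\theta,H}=0$; indeed $D_{\ol\pa_E,\theta,H}=D_H+\psi_H=D$. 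I would define the map by $[D]\mapsto[(\ol\pa_E,\theta)]$.

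It then remains to check that $(\ol\pa_E,\theta)$ lies in $\mathcal{M}_{Higgs,K}$. Since $F_{\ol\pa_E,\theta,H}=0$ we have $\deg_\omega(E,H)=0$, while the difference $\deg_\omega(E,K)-\deg_\omega(E,H)$ is, by the trace identity $(\ref{DHKstarpsiHK})$, the integral of a multiple of $\Delta\log\det h$, which vanishes because $\det h=1$; hence $\deg_\omega(E,K)=0$, i.e. $c_1(E,K)=0$. Writing the Hitchin-Simpson curvature of $K$ as a gauge transform of that of $H$ by $h$, the vanishing $F_{\ol\pa_E,\theta,H}=0$ expresses $\Lambda_\omega F_{\ol\pa_E,\theta,K}$ through the first covariant derivatives of $h$, so that $|h|\in L^\infty$, $|Dh|\in L^2$ (together with $|\psi_K|\in L^2$) yield $|\Lambda_\omega F_{\ol\pa_E,\theta,K}|\in L^1$. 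Irreducibility of $(\ol\pa_E,\theta)$ holds because the distinguished Hermitian-Einstein metric is $H$ itself and $D_{\ol\pa_E,\theta,H}=D$ is irreducible, the uniqueness in Theorem \ref{thm1} ensuring no competing normalized metric exists. Finally the map is well defined on $\mathcal{G}$-orbits: if $D'=g\cdot D$ with $g\in\mathcal{G}$, uniqueness of the Poisson metric forces the associated Higgs data to differ by the same $g$, so $[(\ol\pa_E,\theta)]$ is unchanged.

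For assertion (2) I would run the construction in reverse. Given a class in $\mathcal{M}_{Higgs,K,\phi}$, the Hermitian-Einstein existence theorem for $K$-analytically stable Higgs bundles (the Higgs counterpart of Theorem \ref{thm1}, obtained by the same exhaustion scheme) produces a metric $H$ with $\det h=1$, $|h|\in L^\infty$, $|Dh|\in L^2$ and, in the degree-zero case $c_1(E,K)=0$, with $\Lambda_\omega F_{\ol\pa_E,\theta,H}=0$. Because $X$ is a curve, every $(1,1)$-form is a multiple of $\omega$, so $\Lambda_\omega F_{\ol\pa_E,\theta,H}=0$ upgrades to $F_{\ol\pa_E,\theta,H}=0$, and hence $D:=D_{\ol\pa_E,\theta,H}$ is a flat connection. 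I would send $[(\ol\pa_E,\theta)]$ to $[D]$. The verification that $D\in\mathcal{M}_{Flat,K}$ is symmetric to the previous paragraph: $D$ is flat and irreducible, its $K$-analytic stability is transferred from that of $(\ol\pa_E,\theta)$, and the input bound $|\Lambda_\omega F_{\ol\pa_E,\theta,K}|\leq C\phi$ together with the estimates on $h$ yields $|(D_K^\ast\psi_K)^\perp|\in L^1$.

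I expect the main obstacle to be the transfer of $K$-analytic stability across the correspondence, which is not formal: a sub-Higgs-sheaf need only be $\ol\pa_E$- and $\theta$-invariant, whereas a destabilizing object for the flat structure must be parallel for the full connection $D$. The key point, to be proved by a Bochner/vanishing argument using the harmonic (Hermitian-Einstein) metric and the degree-zero normalization together with the Chern-Weil formula $(\ref{ChernHiggs})$, is that any sub-object attaining the slope bound is automatically $D$-parallel, so that strict stability on one side matches strict stability on the other; carrying this out over a noncompact curve is precisely where $|h|\in L^\infty$, $|Dh|\in L^2$ and the weight $\phi$ are needed to legitimize the integrations by parts. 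A subordinate delicate point is that on a curve $\theta\wedge\theta=0$ and the $(0,2)$-part of the curvature vanish automatically, so the only integrability input is the holomorphicity $\ol\pa_E\theta=0$ supplied by Theorem \ref{thm2}; and that the output control is genuinely only $L^1$ rather than $\phi$-bounded, reflecting that the pointwise weight is lost when passing through the correspondence and only the integral estimate survives.
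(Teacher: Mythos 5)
Your overall route coincides with the paper's (Poisson metric plus Proposition \ref{Higgsstructure1} in one direction; a Hermitian--Einstein metric plus the curve-dimension upgrade to flatness in the other), but two points are wrong or missing. First, the Higgs field must be the \emph{trace-free} part $\psi_H^{\perp,1,0}$, not $\psi_H^{1,0}$. Theorem \ref{thm1} yields a Poisson metric, so $D_H^\ast\psi_H=c_H\id_E$ where, by $(\ref{DHKstarpsiHK})$ and $\det h=1$, $c_H=\frac{1}{\rank(E)}\tr D_K^\ast\psi_K$; this need not vanish for classes in $\mathcal{M}_{Flat,K,\phi}$, since only $(D_K^\ast\psi_K)^\perp$ is controlled there. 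On a curve, combining $D_H\psi_H=0$ with the Poisson equation shows that $D_H^{0,1}\psi_H^{1,0}$ equals a nonzero multiple of $c_H\,\omega\otimes\id_E$, so your $\theta=\psi_H^{1,0}$ is not holomorphic, and the claimed identity $D_{\ol\pa_E,\theta,H}=D$ fails: the flat Hitchin--Simpson connection is $D_H+\psi_H^\perp$, which differs from $D$ by a closed $1$-form tensored with $\id_E$. Your irreducibility argument must be redone accordingly, and the uniqueness it invokes is not the uniqueness of Poisson metrics in Theorem \ref{thm1} but uniqueness of normalized Hermitian--Einstein metrics for the resulting Higgs structure, which the paper proves separately by an eigendecomposition argument.

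Second, and more seriously, membership in $\mathcal{M}_{Higgs,K}$ (resp.\ $\mathcal{M}_{Flat,K}$) requires the image object to be $K$-analytically \emph{stable}, and you defer exactly this step, proposing a Bochner argument that misses the real difficulty. Showing that a sub-object achieving slope equality is parallel is Proposition \ref{polystable} (and its Higgs analogue); together with irreducibility this only yields stability with respect to the metric $H$ produced by the existence theorem. The genuine problem is to compare degrees taken with respect to $K$ and with respect to $H$ for an arbitrary invariant subsheaf on a noncompact base, where both degrees are improper integrals. The paper's mechanism is a dichotomy: if $|\ol\pa_{E,\theta}\pi_K|\notin L^2$, then $\deg_\omega(V,K)=-\infty$ by the Chern--Weil formula $(\ref{ChernHiggs})$ together with $|\Lambda_\omega F_{\ol\pa_E,\theta,K}|\in L^1$; if $|\ol\pa_{E,\theta}\pi_K|\in L^2$, then Simpson's Lemma 5.2 (or Yau's lemma) kills the exhaustion boundary term coming from $\Delta\log\det h_V$, whence $\deg_\omega(V,K)=\deg_\omega(V,H)$; $K$-stability then follows from $H$-stability and $\deg_\omega(E,K)=\deg_\omega(E,H)=0$. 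Without this step your maps are not shown to land in the target spaces. Relatedly, your claim that $\Lambda_\omega F_{\ol\pa_E,\theta,K}$ is expressed through first covariant derivatives of $h$ alone is inaccurate: gauge-transforming the curvature produces second-order terms, and the needed $L^1$ bound rests on the paper's identity $(\ref{sec429})$, in which those terms are organized against $(D_K^\ast\psi_K)^\perp$ and terms quadratic in first derivatives.
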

\par Notice that the complex plane $\mathbb{C}$ and compact Riemann surfaces punctured finite points satisfy the assumptions in Theorem \ref{thm3}. Moreover, the objects are analytically stable with respect to the same metric $K$. For higher dimensional case, under some extra technical assumptions(see \cite{Si1988,Mo2020}), the direction that from $K$-analytically stable and irreducible Higgs structures to $K$-analytically
stable and irreducible flat structures can be finished by the results of Simpson \cite{Si1988}, Mochizuki \cite{Mo2020} and the discussion as
that in $(2)$ of Theorem \ref{thm3}. Conversely, start with a $K$-analytically stable and irreducible flat structure,
using Proposition \ref{Higgsstructure2}(or Proposition \ref{Higgsstructure3}) instead, we can obtain an irreducible Higgs structure. In the process, the analytic estimates in Theorem \ref{thm3} remain valid, but currently it is not clear that when the resulting Higgs structure is stable with respect to $K$. We will further discuss this problem in the future.
\par Theorem \ref{thm3} is established based on the existence of Poisson metrics(Theorem \ref{thm1}) and Hermitian-Einstein metrics(\cite{Mo2020,Si1988}), while analytic estimates and the comparison of different analytically stabilities play the significant roles. If $X$ is compact, Theorem \ref{thm3} and the corresponding result for high dimensional case reduce to the classical nonabelian Hodge correspondence on compact K\"{a}hler manifolds. This correspondence has been generalised to parabolic and noncompact setting, specially on the case of a projective variety equipped with normal crossing divisors, in a series of works including those of Simpson \cite{Si1990} and Biquard-Boalch \cite{BB2004} for the tame and wild case in complex dimension one. And the higher dimensional case can be found in \cite{Bi1997,Mo2006,Mo2009,Mo2011}.
\subsection{Organization}
The rest of this paper is organized as follows. In Section 2, we discuss the assumptions on the base manifolds, introduce the notation of stability of vector bundles, and present some useful calculations. In Section 3, we solve the Dirichlet boundary problems for harmonic metric equation and Poisson metric equation, then give the proof of Theorem \ref{thm1}. In Section 4, we firstly investigate the uniqueness of Poisson metrics and then prove Theorem \ref{thm2}, Theorem \ref{thm3}. As a byproduct, also a vanishing theorem of Kamber-Tondeur classes is obtained.
\section{Preliminaries}
\subsection{Assumptions on the base spaces}
Assume $(M,g)$ be a Riemannian manifold, we list the following assumptions that needed in this paper.
\begin{assumption}\label{assump1}
There is a function $\phi:M\rightarrow\mathbb{R}_{\geq0}$ with $\phi\in L^1$, such that if $f$ is a nonnegative bounded function on $M$ satisfying
$\Delta f\geq-B\phi$ for a positive constant $B$ in distribution sense, we have
\begin{equation}\begin{split}\label{sec211}
\sup\limits_{M}f\leq C(B)(1+\int_Mf\phi\dvol_g).
\end{split}\end{equation}
Furthermore, if $f$ satisfies $\Delta f\geq0$, we have $\Delta f=0$.
\end{assumption}
\begin{assumption}\label{assump2}
$(M,g)$ admits an exhaustion function $\rho: M\rightarrow\mathbb{R}_{\geq0}$ with $|\Delta\rho|\in L^1$.
\end{assumption}
\begin{assumption}\label{assump2'}
$(M,g)$ admits an exhaustion function $\rho: M\rightarrow\mathbb{R}_{\geq0}$ with bounded $\Delta\rho$.
\end{assumption}
The above assumptions are introduced for K\"{a}hler manifolds in \cite{Mo2020} and \cite{Si1988}.
\begin{examp}
Compact Riemannian manifolds satisfy Assumptions \ref{assump1}, \ref{assump2} and \ref{assump2'}.
\end{examp}
\begin{examp}\label{exam2}
Let $M$ be a Zariski open subset of a compact K\"{a}hler manifold $(\ol{X},\ol{g})$, $g$ is the restriction of $\ol{g}$, then it satisfies
Assumptions \ref{assump1}, \ref{assump2} and \ref{assump2'}.
\end{examp}
\begin{examp}
Consider $(\mathbb{R}^2,g_{\mathbb{R}^2})$ with the Euclidean metric $g_{\mathbb{R}^2}$ and $\phi_{\mathbb{R}^2}=(1+|r|^2)^{-1-\delta}, \delta>0$, then it has infinite volume and satisfies Assumptions \ref{assump1}. Moreover, $(\mathbb{R}^2,\phi_{\mathbb{R}^2}g_{\mathbb{R}^2})$ satisfies Assumptions \ref{assump1} and has finite volume.
\end{examp}
\begin{examp}
Any product manifold $(\mathbb{R}^2,g_{\mathbb{R}^2})\times(N,g_N)$ has infinite volume and satisfies Assumptions \ref{assump1},
where $(N,g_N)$ is a compact Riemannian manifold.
\end{examp}
\subsection{Stability of vector bundles}
Assume $(E,D)$ be a vector bundle over a Riemannian manifold $(M,g)$, equipped with a metric $K$ on $E$.
\begin{defn}
The $K$-analytically degree of $(E,D)$ is defined by
\begin{equation}\begin{split}
\deg_g(E,D,K)=-\int_M\tr D_K^\ast\psi_K\dvol_g.
\end{split}\end{equation}
For any $D$-invariant sub-bundle $S\subseteq E$, we define
\begin{equation}\begin{split}
\deg_g(S,D,K)=-\int_M\tr D_{K_S}^\ast\psi_{K_S}\dvol_g,
\end{split}\end{equation}
where $K_S$ is the restricted metric on $S$. Then $(E,D)$ is said to be $K$-analytically stable if for any nontrivial $D$-invariant sub-bundle $S$,
it holds that
\begin{equation}\begin{split}
\frac{\deg_g(S,D,K)}{\rank(S)}<\frac{\deg_g(E,D,K)}{\rank(E)}.
\end{split}\end{equation}
\end{defn}
\begin{rem}\label{kahlerstability}
If $(M,\omega)$ be a K\"{a}hler manifold and $D$ is a flat connection, we have
\begin{equation}\begin{split}
\deg_g(E,D,K)=\deg_\omega(E,D^{0,1},K)=-\deg_\omega(E,\delta_K^{0,1},K),
\end{split}\end{equation}
where $\delta_K^{0,1}=D_K^{0,1}-\psi_K^{0,1}$, $\deg_\omega(E,D^{0,1},K)$, $\deg_\omega(E,\delta_K^{0,1},K)$ are the $K$-analytically degree
of the holomorphic bundles $(E,D^{0,1})$ and $(E,\delta_K^{0,1})$ respectively.
\end{rem}
For any $D$-invariant sub-bundle $S$, we have the Chern-Weil formula written in terms of $D_K^\ast\psi_K$,
\begin{equation}\begin{split}\label{Chernweilbundle}
\deg_g(S,D,K)=-\int_M\left(\tr(\pi_K\circ D_K^\ast\psi_K)+\frac{1}{2}|D\pi_K|^2_K\right)\dvol_g,
\end{split}\end{equation}
where $\pi_K$ is the orthogonal projection onto $S$.
\begin{proposition}\label{polystable}
If a vector bundle $(E,D)$ admits a Poisson metric $K$, it is an orthogonal direct sum of $K$-analytically stable bundles.
\end{proposition}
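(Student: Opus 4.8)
The plan is to use the Poisson equation to bound the slope of every $D$-invariant sub-bundle by the slope of $E$, to identify the equality case as a parallel orthogonal projection, and then to split off such destabilizing sub-bundles by induction on the rank. First I would establish the decisive slope comparison. Write $r=\rank(E)$ and $\mu(E):=\deg_g(E,D,K)/r$. Since $K$ is a Poisson metric, $D_K^\ast\psi_K=c_K\id_E$ for some function $c_K$, so $\deg_g(E,D,K)=-\int_M\tr(c_K\id_E)\dvol_g=-\int_M r\,c_K\dvol_g$ and hence $\mu(E)=-\int_Mc_K\dvol_g$. Let $S\subseteq E$ be any nontrivial $D$-invariant sub-bundle with orthogonal projection $\pi_K$. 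Feeding the pointwise identity $\tr(\pi_K\circ D_K^\ast\psi_K)=c_K\tr\pi_K=c_K\rank(S)$ into the Chern--Weil formula $(\ref{Chernweilbundle})$, I obtain
\[\frac{\deg_g(S,D,K)}{\rank(S)}=\mu(E)-\frac{1}{2\rank(S)}\int_M|D\pi_K|_K^2\dvol_g.\]
This shows $\deg_g(S,D,K)/\rank(S)\le\mu(E)$ for every nontrivial $D$-invariant sub-bundle, with equality precisely when $D\pi_K=0$.

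Next I would analyze the equality case. If $(E,D)$ is already $K$-analytically stable there is nothing to prove. Otherwise the stability inequality fails for some proper nontrivial $D$-invariant $S$, and combined with the bound above this forces $\deg_g(S,D,K)/\rank(S)=\mu(E)$, hence $D\pi_K=0$. The key point, which I regard as the main technical step, is that a parallel orthogonal projection splits the Poisson structure. Decomposing $D\pi_K=D_K\pi_K+[\psi_K,\pi_K]$ into its $K$-self-adjoint and $K$-skew-adjoint parts (note $D_K\pi_K$ is self-adjoint while $[\psi_K,\pi_K]$ is skew-adjoint, since $\pi_K$ and $\psi_K$ are both self-adjoint), the vanishing of $D\pi_K$ is equivalent to $D_K\pi_K=0$ and $[\psi_K,\pi_K]=0$. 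Consequently $D_K$ and $\psi_K$ each preserve the orthogonal splitting $E=S\oplus S^\perp$, both summands are $D$-invariant, and restricting the decomposition $D=D_K+\psi_K$ shows that $K|_S$ and $K|_{S^\perp}$ are again Poisson metrics with the same function $c_K$, i.e. $D_{K_S}^\ast\psi_{K_S}=c_K\id_S$ and likewise on $S^\perp$.

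Finally I would conclude by induction on $\rank(E)$. For $\rank(E)=1$ there is no proper nontrivial sub-bundle, so $(E,D)$ is vacuously $K$-analytically stable. For the inductive step the stable case is immediate, and in the non-stable case the above produces an orthogonal $D$-invariant splitting $E=S\oplus S^\perp$ into bundles of strictly smaller rank, each carrying the restricted Poisson metric. Applying the inductive hypothesis to $(S,D|_S,K|_S)$ and $(S^\perp,D|_{S^\perp},K|_{S^\perp})$ expresses each as an orthogonal direct sum of $K$-analytically stable bundles, whence so is $E$. I expect the only delicate points to be the self-adjoint/skew-adjoint splitting in the equality case and checking that the degrees and the integral $\int_M|D\pi_K|_K^2\dvol_g$ are well defined on the noncompact $M$; the latter is already implicit in the validity of the Chern--Weil formula $(\ref{Chernweilbundle})$.
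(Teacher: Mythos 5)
Your proposal is correct and follows essentially the same route as the paper: the Chern--Weil formula $(\ref{Chernweilbundle})$ combined with $D_K^\ast\psi_K=c_K\id_E$ yields the slope inequality with defect $\frac{1}{2}\int_M|D\pi_K|_K^2\dvol_g$, the equality case forces $D\pi_K=0$, and one splits off $S\oplus S^\perp$ and concludes by induction on the rank. The only difference is that you spell out the self-adjoint/skew-adjoint decomposition $D\pi_K=D_K\pi_K+[\psi_K,\pi_K]$ to justify that the restricted metrics are again Poisson, a step the paper leaves implicit.
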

\begin{proof}
For any nontrivial $D$-invariant sub-bundle $S\subset E$, the Chern-Weil formula yields
\begin{equation}\begin{split}
\frac{\deg_g(S,D,K)}{\rank(S)}
&=\frac{-\int_M\left(\tr(\pi_K\circ D_K^\ast\psi_K)+\frac{1}{2}|D\pi_K|^2_K\right)\dvol_g}{\rank(S)}
\\&=\frac{-\int_M\left(\frac{\rank(S)}{\rank(E)}\tr D_K^\ast\psi_K+\frac{1}{2}|D\pi_K|^2_K\right)\dvol_g}{\rank(S)}
\\&\leq\frac{\deg_g(E,D,K)}{\rank(E)}.
\end{split}\end{equation}
If the equality holds, we obtain $D\pi_K=0$. Let $S^\perp$ denote the orthogonal complement of $S$ in $E$ with respect to $K$, then it is also a
$D$-invariant sub-bundle such that $(E,D)\cong(S,D_S)\oplus(S^\perp,D_{S^\perp})$, where $D_S$ and $D_{S^\perp}$ are the induced connections.
Moreover, the induced metrics $K_S$ and $K_{S^\perp}$ both are Poisson metrics and  we complete the proof by an easy induction.
\end{proof}
\subsection{Some useful calculations}
We present some important calculations, it is emphasized that we are not working under the local flat frame and hence most of them need not the
assumption that the underlying connection is flat.
\par Given a vector bundle $E$ over a Riemannian manifold $(M,g)$, the exterior differential operator $D:\Omega^{p}(E)\rightarrow\Omega^{p+1}(E)$
relative to a connection is given by
\begin{equation}\begin{split}
D\omega(e_0,...,e_p)
&=\sum\limits_{k=0}^p(-1)^{k}D_{e_k}(\omega(e_0,e_1,...,\hat{e_k},...,e_p))
\\&+\sum\limits_{k<l}(-1)^{k+l}\omega([e_k,e_l],e_1,...,\hat{e_k},...,\hat{e_l},...,e_p).
\end{split}\end{equation}
Since the Levi-Civita connection on $TM$ is torsion-free, we also have
\begin{equation}\begin{split}
D\omega(e_0,...,e_p)
&=\sum\limits_{k=0}^p(-1)^{k}(\tilde{\nabla}_{e_k}\omega)(e_0,...,\hat{e_k},...,e_p),
\end{split}\end{equation}
where $\tilde\nabla$ is the induced connection on $\Omega^\ast(E)$.
\par For a metric $K$ on $E$, we define the point-wise inner product
\begin{equation}\begin{split}
K(\omega,\theta)(x)=\sum\limits_{i_1<...<i_p}(\omega(e_{i_1},...,e_{i_p}),\theta(e_{i_1},...,e_{i_p}))_K,
\end{split}\end{equation}
where $\omega,\theta\in\Omega^p(E)$ and $\{e_i\}_{i=1}^{\dim M}$ is the orthogonal unit basis of $T_xM$. Then the decomposition $(\ref{decomposition})$
is achieved by choosing $\psi_K$ such that
\begin{equation}\begin{split}\label{psi}
K(\psi_K(X),Y)=\frac{1}{2}(K(DX,Y)+K(X,DY)-dK(X,Y)),
\end{split}\end{equation}
for any $X,Y\in\Omega^0(E)$. With respect to the Riemannian structures of $E$ and $TM$, the co-differential operator
$D_K^\ast:\Omega^{p}(E)\rightarrow\Omega^{p-1}(E)$ is characterized via the formula
\begin{equation}\begin{split}
\int_MK(D_K\omega,\theta)\dvol_g=\int_MK(\omega,D_K^\ast\theta)\dvol_g,
\end{split}\end{equation}
for any $\omega\in\Omega^{p-1}(E),\theta\in\Omega^{p}(E)$ and either $\omega$ or $\theta$ is compactly supported. Then it holds
\begin{equation}\begin{split}
D_K^\ast\theta(e_1,..,e_{p-1})=-\sum\limits_{i}(\nabla_{K,e_i}\theta)(e_i,e_1,...,e_{p-1}),
\end{split}\end{equation}
where $\nabla_K$ is the connection on $\Omega^\ast(E)$, induced by the Levi-Civita connection $\nabla$ and $D_K$.
\par If $H$ be another metric on $E$, we define the positive endomorphism $h=K^{-1}H$ by setting $H(\cdot,\cdot)=K(h(\cdot),\cdot)$.
A straightforward calculation, using $(\ref{psi})$, shows that
\begin{equation}\label{DHDK}\begin{split}
D_H&=D_K+\frac{1}{2}h^{-1}\delta_Kh
=\frac{1}{2}(D_K+h^{-1}\circ D_K\circ h+\psi_K-h^{-1}\circ\psi_K\circ h),
\end{split}\end{equation}
\begin{equation}\label{psiHpsiK}\begin{split}
\psi_H&=\psi_K-\frac{1}{2}h^{-1}\delta_Kh
=\frac{1}{2}(\psi_K+h^{-1}\circ\psi_K\circ h+D_K-h^{-1}\circ D_K\circ h),
\end{split}\end{equation}
\begin{equation}\label{deltaHdeltaK}\begin{split}
\delta_H&=\delta_K+h^{-1}\delta_Kh
=h^{-1}\circ\delta_K\circ h,
\end{split}\end{equation}
where $\delta_K=D_K-\psi_K$.
\begin{lemma}
If $H(t)$ is a one-parameter family of fiber metrics on the vector bundle $(E,D)$, then we have
\begin{equation}\begin{split}\label{DHt}
\frac{\pa D_{H(t)}}{\pa t}=\frac{1}{2}D_{H(t)}(h^{-1}(t)\frac{\pa h(t)}{\pa t})-\frac{1}{2}[\psi_H(t),h^{-1}(t)\frac{\pa h(t)}{\pa t}],
\end{split}\end{equation}
\begin{equation}\begin{split}\label{psiHt}
\frac{\pa\psi_{H(t)}}{\pa t}=-\frac{1}{2}D_{H(t)}(h^{-1}(t)\frac{\pa h(t)}{\pa t})+\frac{1}{2}[\psi_H(t),h^{-1}(t)\frac{\pa h(t)}{\pa t}],
\end{split}\end{equation}
\begin{equation}\begin{split}\label{DHstarpsiHt}
\frac{\pa D_{H(t)}^\ast\psi_{H(t)}}{\pa t}
&=-\frac{1}{2}D_{H(t)}^\ast D_{H(t)}(h^{-1}(t)\frac{\pa h(t)}{\pa t})+\frac{1}{2}[D_{H(t)}^\ast\psi_{H(t)},h^{-1}(t)\frac{\pa h(t)}{\pa t}]
\\&-\frac{1}{2}g^{ij}[\psi_{H(t)}(\frac{\pa}{\pa x^i}),[\psi_{H(t)}(\frac{\pa}{\pa x^j}),h^{-1}(t)\frac{\pa h(t)}{\pa t}]],
\end{split}\end{equation}
where $h(t)=K^{-1}H(t)$.
\end{lemma}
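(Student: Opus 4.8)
The plan is to prove all three identities by reducing, at each fixed time, to the case where the reference metric equals the metric at that instant. The crucial preliminary observation is that the section $h^{-1}\frac{\pa h}{\pa t}$ of $\End(E)$ is independent of the reference metric $K$ used to form $h=K^{-1}H$: replacing $K$ by another fixed metric $K'$ multiplies $h$ on the left by the $t$-independent endomorphism $K^{-1}K'$, which cancels in $h^{-1}\frac{\pa h}{\pa t}$. Since $D_H,\psi_H,\delta_H$ and $D_H^\ast$ are intrinsic to $H$ (and $g$), whereas $(\ref{DHDK})$–$(\ref{deltaHdeltaK})$ hold for any reference, I may verify the identities at an arbitrary fixed $t_0$ after choosing $K:=H(t_0)$. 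With this choice $h(t_0)=\id$, so $\eta:=h^{-1}\frac{\pa h}{\pa t}$ satisfies $\eta(t_0)=\frac{\pa h}{\pa t}\big|_{t_0}$, and $D_{H(t_0)}=D_K$, $\psi_{H(t_0)}=\psi_K$; moreover $\delta_K=D_K-\psi_K$, acting on $\End(E)$ by $\delta_K A=D_K A-[\psi_K,A]$, is now $t$-independent.

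For $(\ref{DHt})$ and $(\ref{psiHt})$ I would differentiate $(\ref{DHDK})$ and $(\ref{psiHpsiK})$. Since $\delta_K h|_{t_0}=\delta_K(\id)=0$ and $\frac{\pa}{\pa t}(\delta_K h)|_{t_0}=\delta_K\frac{\pa h}{\pa t}|_{t_0}=\delta_K\eta$ (using $t$-independence of $\delta_K$ and $h(t_0)=\id$), the Leibniz rule gives $\frac{\pa}{\pa t}(h^{-1}\delta_K h)|_{t_0}=\delta_K\eta=D_K\eta-[\psi_K,\eta]$. Hence $\frac{\pa D_H}{\pa t}|_{t_0}=\tfrac12 D_{H(t_0)}\eta-\tfrac12[\psi_{H(t_0)},\eta]$, and $\frac{\pa \psi_H}{\pa t}|_{t_0}$ is its negative; as $t_0$ is arbitrary these are $(\ref{DHt})$, $(\ref{psiHt})$, and they sum to zero, consistent with $D$ being fixed.

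For $(\ref{DHstarpsiHt})$ the key structural point is that $D_H^\ast$ depends on $H$ only through the $H$-unitary connection $D_H$: the formula $D_H^\ast\theta=-g^{ij}(\nabla_{H,\pa_i}\theta)(\pa_j)$ comes from integration by parts using only the $H$-compatibility of $D_H$, so the fiber metric is fully absorbed into $D_H$. Writing $\alpha:=\frac{\pa D_H}{\pa t}|_{t_0}=\tfrac12 D_K\eta-\tfrac12[\psi_K,\eta]$, differentiating $D_H^\ast\psi_H=-g^{ij}(\nabla_{H,\pa_i}\psi_H)(\pa_j)$ at $t_0$ yields two contributions: varying the argument gives $D_K^\ast(\frac{\pa\psi_H}{\pa t})=-D_K^\ast\alpha$, while varying the induced $\End(E)$-connection (whose variation acts by $[\alpha(\pa_i),\cdot]$) gives $-g^{ij}[\alpha(\pa_i),\psi_K(\pa_j)]$. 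Thus $\frac{\pa}{\pa t}(D_H^\ast\psi_H)|_{t_0}=-D_K^\ast\alpha-g^{ij}[\alpha(\pa_i),\psi_K(\pa_j)]$.

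Finally I would carry out the bracket bookkeeping, which I expect to be the only delicate part. Expanding $\alpha$ and using the derivation property of $\nabla_K$ gives $D_K^\ast[\psi_K,\eta]=[D_K^\ast\psi_K,\eta]-g^{ij}[\psi_K(\pa_j),(D_K\eta)(\pa_i)]$. Substituting, the two ``$\psi_K$–$D_K\eta$'' terms coming from $-D_K^\ast\alpha$ and from $-g^{ij}[\alpha(\pa_i),\psi_K(\pa_j)]$ are opposite by antisymmetry of the bracket and cancel, leaving $-\tfrac12 D_K^\ast D_K\eta+\tfrac12[D_K^\ast\psi_K,\eta]+\tfrac12 g^{ij}[[\psi_K(\pa_i),\eta],\psi_K(\pa_j)]$. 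Using antisymmetry of $[\cdot,\cdot]$ together with the symmetry of $g^{ij}$ to reindex, the last term becomes $-\tfrac12 g^{ij}[\psi_K(\pa_i),[\psi_K(\pa_j),\eta]]$; replacing $D_K,\psi_K$ by $D_{H(t_0)},\psi_{H(t_0)}$ and $\eta$ by $h^{-1}\frac{\pa h}{\pa t}$ gives $(\ref{DHstarpsiHt})$. The main obstacle is keeping signs and index placements consistent through this commutator algebra, especially the cancellation and the final reindexing; the two conceptual reductions — reference-metric independence of $h^{-1}\frac{\pa h}{\pa t}$, and the fact that $D_H^\ast$ varies only through $D_H$ — are what make the whole computation manageable.
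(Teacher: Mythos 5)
Your proof is correct: both reductions you rely on are valid ($h^{-1}\frac{\pa h}{\pa t}=H^{-1}\frac{\pa H}{\pa t}$ is indeed independent of the reference metric, and $D_H^\ast$ enters only through the induced connection $\nabla_H$, so its variation acts on a fixed argument $\theta$ by $-g^{ij}[\alpha(\frac{\pa}{\pa x^i}),\theta(\frac{\pa}{\pa x^j})]$ with $\alpha=\frac{\pa D_H}{\pa t}$), and your commutator bookkeeping — the derivation identity for $D_K^\ast[\psi_K,\eta]$, the cancellation of the two $\psi$-versus-$D\eta$ cross terms, and the final reindexing of the double bracket — checks out term by term. The paper follows the same overall strategy (differentiate the comparison formulas $(\ref{DHDK})$, $(\ref{psiHpsiK})$ and the divergence form of the adjoint, then do bracket algebra), but it keeps the reference metric $K$ arbitrary throughout: $(\ref{DHt})$ is obtained by direct differentiation of $(\ref{DHDK})$ followed by a lengthy recombination of terms into $D_H$- and $\psi_H$-expressions, $(\ref{psiHt})$ then follows because $D=D_H+\psi_H$ is fixed, and $(\ref{DHstarpsiHt})$ is obtained in normal coordinates by differentiating $D_H^\ast\psi_H=-\sum_i[D_{H,\frac{\pa}{\pa x^i}},\psi_H(\frac{\pa}{\pa x^i})]$ and substituting $(\ref{DHt})$, $(\ref{psiHt})$; the expansion there produces exactly the cancellations you describe, so the analytic content of the two computations is identical. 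The genuine difference is your gauge choice $K:=H(t_0)$, legitimized by the reference-independence of $h^{-1}\frac{\pa h}{\pa t}$: it forces $h(t_0)=\id$ and $\delta_Kh|_{t_0}=0$, so $(\ref{DHt})$ and $(\ref{psiHt})$ drop out in one line where the paper needs a page of algebra; what the paper's fixed-$K$ computation buys in exchange is that it never needs that (easy but unstated) independence observation, since every identity is stated and differentiated with the same reference metric.
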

\begin{proof}
It suffice to prove the $(\ref{DHt})$ and $(\ref{DHstarpsiHt})$. By making use of $(\ref{DHDK})$, $(\ref{psiHpsiK})$, we deduce
\begin{equation}\begin{split}
\frac{\pa D_H}{\pa t}&=\frac{1}{2}\frac{\pa}{\pa t}(h^{-1}D_Kh-h^{-1}\circ\psi_K\circ h)
\\&=-\frac{1}{2}h^{-1}\frac{\pa h}{\pa t}h^{-1}D_Kh+\frac{1}{2}h^{-1}D_K\frac{\pa h}{\pa t}+\frac{1}{2}h^{-1}\frac{\pa h}{\pa t}h^{-1}\psi_Kh
-\frac{1}{2}h^{-1}\psi_K\frac{\pa h}{\pa t}
\\&=-\frac{1}{2}h^{-1}\frac{\pa h}{\pa t}h^{-1}D_Kh+\frac{1}{2}h^{-1}D_K\frac{\pa h}{\pa t}-\frac{1}{2}[\psi_H,h^{-1}\frac{\pa h}{\pa t}]
\\&+\frac{1}{2}h^{-1}\frac{\pa h}{\pa t}(\psi_H-\psi_K-D_K+h^{-1}\circ D_K\circ h)
\\&-\frac{1}{2}(\psi_H-\psi_K-D_K+h^{-1}\circ D_K\circ h)h^{-1}\frac{\pa h}{\pa t}
\\&=\frac{1}{2}D_H(h^{-1}\frac{\pa h}{\pa t})-\frac{1}{2}[\psi_H,h^{-1}\frac{\pa h}{\pa t}]
-\frac{1}{2}h^{-1}\frac{\pa h}{\pa t}h^{-1}D_Kh
\\&+\frac{1}{2}h^{-1}D_K\frac{\pa h}{\pa t}+\frac{1}{2}[h^{-1}\frac{\pa h}{\pa t},h^{-1}\circ D_K\circ h]
\\&=\frac{1}{2}D_H(h^{-1}\frac{\pa h}{\pa t})-\frac{1}{2}[\psi_H,h^{-1}\frac{\pa h}{\pa t}].
\end{split}\end{equation}
\par On the other hand, let's choose the local normal coordinate $(x^1,...,x^n)$ at the considered point, using $(\ref{DHt})$ and $(\ref{psiHt})$, we have
\begin{equation}\begin{split}
\frac{\pa D_{H}^\ast\psi_{H}}{\pa t}
&=-\frac{\pa}{\pa t}[D_{H,\frac{\pa}{\pa x^i}},\psi_H(\frac{\pa}{\pa x^i})]
\\&=-[\frac{\pa}{\pa t}D_{H,\frac{\pa}{\pa x^i}},\psi_H(\frac{\pa}{\pa x^i})]-[D_{H,\frac{\pa}{\pa x^i}},\frac{\pa}{\pa t}\psi_H(\frac{\pa}{\pa x^i})]
\\&=-\frac{1}{2}[D_{H,\frac{\pa}{\pa x^i}}(h^{-1}\frac{\pa h}{\pa t})-[\psi_H(\frac{\pa}{\pa x^i}),h^{-1}\frac{\pa h}{\pa t}],\psi_H(\frac{\pa}{\pa x^i})]
\\&+\frac{1}{2}[D_{H,\frac{\pa}{\pa x^i}},D_{H,\frac{\pa}{\pa x^i}}(h^{-1}\frac{\pa h}{\pa t})-[\psi_H(\frac{\pa}{\pa x^i}),h^{-1}\frac{\pa h}{\pa t}]]
\\&=\frac{1}{2}D_{H,\frac{\pa}{\pa x^i}}D_{H,\frac{\pa}{\pa x^i}}(h^{-1}\frac{\pa h}{\pa t})-\frac{1}{2}[\psi_H(\frac{\pa}{\pa x^i}),
[\psi_H(\frac{\pa}{\pa x^i}),h^{-1}\frac{\pa h}{\pa t}]]
\\&-\frac{1}{2}[D_{H,\frac{\pa}{\pa x^i}}(h^{-1}\frac{\pa h}{\pa t}),\psi_H(\frac{\pa}{\pa x^i})]-\frac{1}{2}[D_{H,\frac{\pa}{\pa x^i}},
[\psi_H(\frac{\pa}{\pa x^i}),h^{-1}\frac{\pa h}{\pa t}]]
\\&=-\frac{1}{2}D_H^\ast D_H(h^{-1}\frac{\pa h}{\pa t})+\frac{1}{2}[D_{H}^\ast\psi_{H},h^{-1}\frac{\pa h}{\pa t}]
\\&-\frac{1}{2}[\psi_H(\frac{\pa}{\pa x^i}),[\psi_H(\frac{\pa}{\pa x^i}),h^{-1}\frac{\pa h}{\pa t}]].
\end{split}\end{equation}
\end{proof}
\par Under the local coordinate $(x^1,...,x^n)$, by using $(\ref{DHDK})$ and $(\ref{psiHpsiK})$, it follows
\begin{equation}\begin{split}\label{DHKstarpsiHK}
D_H^\ast\psi_H&=D_K^\ast\psi_K-\frac{1}{2}D_K^\ast(h^{-1}\delta_Kh)-\frac{1}{2}g^{ij}[h^{-1}\delta_{K,\frac{\pa}{\pa x^i}}h,\psi_K(\frac{\pa}{\pa x^j})]
\\&=D_K^\ast\psi_K+\frac{1}{2}g^{ij}D_{\frac{\pa}{\pa x^i}}(h^{-1}\delta_{K,\frac{\pa}{\pa x^j}}h)-\frac{1}{2}g^{ij}
\Gamma_{ij}^kh^{-1}\delta_{K,\frac{\pa}{\pa x^k}}h.
\end{split}\end{equation}
\par From now on, the norms below are taken with respect to $K$ unless indicated explicitly. Based on $(\ref{DHKstarpsiHK})$,
we prove the following lemma.
\begin{lemma}\label{keylemma}
Let $H$ and $K$ be two fiber metrics on the vector bundle $(E,D)$, it holds
\begin{equation}\label{keylemma1}\begin{split}
(D_H^\ast\psi_H-D_K^\ast\psi_K,h)=\frac{1}{2}\Delta\tr h-\frac{1}{2}|h^{-\frac{1}{2}}\delta_Kh|^2,
\end{split}\end{equation}
\begin{equation}\label{keylemma2}\begin{split}
(D_K^\ast\psi_K-D_H^\ast\psi_H,h^{-1})_H=\frac{1}{2}\Delta\tr h^{-1}-\frac{1}{2}|h^{\frac{1}{2}}\delta_Hh^{-1}|_H^2,
\end{split}\end{equation}
where $h=K^{-1}H$.
\end{lemma}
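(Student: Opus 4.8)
The plan is to start from the pointwise formula \eqref{DHKstarpsiHK} and pair it with $h$ in the $K$-inner product on $\End(E)$, carrying out the entire computation at a fixed point in geodesic normal coordinates so that $g^{ij}=\delta^{ij}$ and $\Gamma_{ij}^k=0$ there. With that simplification the first line of \eqref{DHKstarpsiHK} reads
\[
D_H^\ast\psi_H-D_K^\ast\psi_K=-\tfrac12 D_K^\ast\eta-\tfrac12\sum_i[\eta_i,\psi_K(\tfrac{\pa}{\pa x^i})],\qquad \eta:=h^{-1}\delta_K h,\ \ \eta_i:=h^{-1}\delta_{K,\frac{\pa}{\pa x^i}}h.
\]
Before pairing I would record three elementary facts used repeatedly: on $\End(E)$ the operator $\delta_K$ acts by $\delta_K h=D_Kh-[\psi_K,h]$; the trace is $D_K$-parallel, so $d\tr h=\tr(D_Kh)=\tr(\delta_K h)$ since traces of commutators vanish; and consequently $\tr(\eta_i h)=\tr(\delta_{K,\frac{\pa}{\pa x^i}}h)=\pa_i\tr h$. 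Finally, I expect the second identity \eqref{keylemma2} to follow from the first by interchanging the roles of $H$ and $K$, which replaces $h$ by $h^{-1}=H^{-1}K$, the reference metric by $H$, and $\delta_K$ by $\delta_H$; so I would only prove \eqref{keylemma1} in detail.

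To extract the Laplacian term I would pair $D_K^\ast\eta$ with $h$. Using $D_K^\ast\eta=-\sum_iD_{K,\frac{\pa}{\pa x^i}}\eta_i$ at the point, the Leibniz rule $\pa_i\tr(\eta_ih)=\tr\big((D_{K,\frac{\pa}{\pa x^i}}\eta_i)h\big)+\tr(\eta_i D_{K,\frac{\pa}{\pa x^i}}h)$, and the identity $\tr(\eta_ih)=\pa_i\tr h$ above, I obtain
\[
(D_K^\ast\eta,h)=-\sum_i\tr\!\big((D_{K,\frac{\pa}{\pa x^i}}\eta_i)h\big)=-\Delta\tr h+\sum_i\tr(\eta_iD_{K,\frac{\pa}{\pa x^i}}h).
\]
Substituting this back reproduces the term $\tfrac12\Delta\tr h$ and reduces the lemma to the purely algebraic pointwise identity
\[
\sum_i\tr(\eta_iD_{K,\frac{\pa}{\pa x^i}}h)+\sum_i\big([\eta_i,\psi_K(\tfrac{\pa}{\pa x^i})],h\big)=|h^{-\frac12}\delta_Kh|^2 .
\]

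For this remaining identity I would decompose $\delta_{K,\frac{\pa}{\pa x^i}}h=u_i-v_i$ with $u_i:=D_{K,\frac{\pa}{\pa x^i}}h$ and $v_i:=[\psi_K(\tfrac{\pa}{\pa x^i}),h]$, noting that $u_i$ is $K$-self-adjoint (since $D_K$ preserves $K$ and $h$ is self-adjoint) while $v_i$ is $K$-anti-self-adjoint (since $\psi_K$ and $h$ are self-adjoint). Hence $(\delta_{K,\frac{\pa}{\pa x^i}}h)^{\ast K}=u_i+v_i$, and a cyclic-trace computation gives $|h^{-\frac12}\delta_{K,\frac{\pa}{\pa x^i}}h|^2=\tr\!\big(h^{-1}(u_i-v_i)(u_i+v_i)\big)$. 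On the other side, $\tr(\eta_iD_{K,\frac{\pa}{\pa x^i}}h)=\tr(h^{-1}(u_i-v_i)u_i)$, while moving $h$ through the commutator yields $\big([\eta_i,\psi_K(\tfrac{\pa}{\pa x^i})],h\big)=\tr(\eta_i v_i)=\tr(h^{-1}(u_i-v_i)v_i)$; adding the two reconstructs $\tr(h^{-1}(u_i-v_i)(u_i+v_i))$ term by term.

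I expect the only genuine obstacle to be this final bookkeeping: matching the remainder $\sum_i\tr(\eta_iD_{K,\frac{\pa}{\pa x^i}}h)$ produced by the integration by parts against the commutator contribution $[\eta_i,\psi_K(\tfrac{\pa}{\pa x^i})]$, so that the self-adjoint and anti-self-adjoint cross terms assemble into the single norm $|h^{-\frac12}\delta_Kh|^2$. Everything else—Leibniz, cyclicity of the trace, and the normal-coordinate reductions—is routine, and the resulting identity is global even though it is verified at a point, since both sides are coordinate independent.
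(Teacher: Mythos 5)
Your proposal is correct and is essentially the paper's own proof: both start from \eqref{DHKstarpsiHK}, pair with $h$ at a point in normal coordinates, apply the Leibniz rule and the identity $\tr(\delta_Kh)=d\tr h$ to extract $\tfrac12\Delta\tr h$, and then recombine the leftover terms into $-\tfrac12|h^{-\frac12}\delta_Kh|^2$; your self-adjoint/anti-self-adjoint bookkeeping for that last step is just an unrolled version of the paper's one-line use of $\delta_Kh=D_Kh-[\psi_K(\cdot),h]$ together with linearity of the $K$-inner product. Your reduction of \eqref{keylemma2} to \eqref{keylemma1} by exchanging the roles of $H$ and $K$ is also exactly what the paper does.
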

\begin{proof}
We only proof $(\ref{keylemma1})$ and $(\ref{keylemma2})$ follows from $(\ref{keylemma1})$. Choose the local normal
coordinate $(x^1,...,x^n)$ at the considered point, it follows
\begin{equation}\begin{split}
(D_H^\ast\psi_H-D_K^\ast\psi_K,h)
&=-\frac{1}{2}(D_K^\ast(h^{-1}\delta_Kh)+[h^{-1}\delta_{K,\frac{\pa}{\pa x^i}}h,\psi_K(\frac{\pa}{\pa x^i})],h)
\\&=\frac{1}{2}(D_{K,\frac{\pa}{\pa x^i}}(h^{-1}\delta_{K,\frac{\pa}{\pa x^i}}h),h)-\frac{1}{2}([h^{-1}\delta_{K,\frac{\pa}{\pa x^i}}h,
\psi_K(\frac{\pa}{\pa x^i})],h)
\\&=\frac{1}{2}\frac{\pa}{\pa x^i}(h^{-1}\delta_{K,\frac{\pa}{\pa x^i}}h,h)-\frac{1}{2}(h^{-1}\delta_{K,\frac{\pa}{\pa x^i}}h,
D_{K,\frac{\pa}{\pa x^i}}h)
\\&+\frac{1}{2}(h^{-1}\delta_{K,\frac{\pa}{\pa x^i}}h,[\psi_K(\frac{\pa}{\pa x^i}),h])
\\&=\frac{1}{2}\frac{\pa}{\pa x^i}\tr\delta_{K,\frac{\pa}{\pa x^i}}h-\frac{1}{2}(h^{-1}\delta_{K,\frac{\pa}{\pa x^i}}h,
\delta_{K,\frac{\pa}{\pa x^i}}h)
\\&=\frac{1}{2}\Delta\tr h-\frac{1}{2}|h^{-\frac{1}{2}}\delta_Kh|^2.
\end{split}\end{equation}
\end{proof}
Following \cite{Do1985}, the Donaldson's distance on the space of metrics is defined by
\begin{equation}\begin{split}
\sigma(K,H)=\tr(K^{-1}H)+\tr(H^{-1}K)-2\rank(E).
\end{split}\end{equation}
It is obvious that $\sigma(H,K)\geq0$ with equality if and only if $H=K$.
\par By Lemma \ref{keylemma}, we get
\begin{corollary}\label{laplacedistance}Let $H$ and $K$ be two harmonic metrics on the vector bundle $(E,D)$, then we have
\begin{equation}\begin{split}
\Delta\sigma(H,K)=|h^{-\frac{1}{2}}\delta_Kh|^2+|h^{\frac{1}{2}}\delta_Hh^{-1}|_H^2,
\end{split}\end{equation}
where $h=K^{-1}H$. In particular, if $(E,D)$ is simple and $(M,g)$ satisfies the Assumption \ref{assump1}, the mutually bounded harmonic metrics are unique up to scaling.
\end{corollary}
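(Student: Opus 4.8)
The plan is to read the identity straight off Lemma~\ref{keylemma} and then feed the resulting subharmonicity into Assumption~\ref{assump1}. First I would note that, with $h=K^{-1}H$,
$$\sigma(H,K)=\tr h+\tr h^{-1}-2\rank(E),\qquad \Delta\sigma(H,K)=\Delta\tr h+\Delta\tr h^{-1}.$$
Since $H$ and $K$ are both harmonic, the harmonic equation $(\ref{harmonicequation})$ gives $D_H^\ast\psi_H=0$ and $D_K^\ast\psi_K=0$. Plugging these into $(\ref{keylemma1})$ and $(\ref{keylemma2})$ annihilates the left-hand inner products and leaves $\Delta\tr h=|h^{-\frac12}\delta_Kh|^2$ together with $\Delta\tr h^{-1}=|h^{\frac12}\delta_Hh^{-1}|_H^2$. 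Adding these two identities yields the asserted formula for $\Delta\sigma(H,K)$, so this first half is essentially immediate once Lemma~\ref{keylemma} is available.

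For the uniqueness claim I would assume $H$ and $K$ are mutually bounded harmonic metrics, so that $\sigma(H,K)$ is a nonnegative bounded function on $M$. The identity just proved shows $\Delta\sigma(H,K)\geq0$, i.e. $\sigma(H,K)$ is subharmonic, and the second part of Assumption~\ref{assump1} then forces $\Delta\sigma(H,K)=0$. Consequently both nonnegative terms vanish; in particular $\delta_Kh=0$.

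The hard part, which carries the real content, is to upgrade $\delta_Kh=0$ to $Dh=0$. Writing $\delta_K=D_K-\psi_K$ and passing to the induced connection on $\End(E)$, the condition becomes $D_Kh=[\psi_K,h]$. Here I would exploit a self-adjointness dichotomy: $h$ is $K$-self-adjoint, and because $D_K$ preserves $K$ the $\End(E)$-valued one-form $D_Kh$ is again self-adjoint, whereas $[\psi_K,h]$ is anti-self-adjoint since $\psi_K$ and $h$ are both self-adjoint. An endomorphism that is simultaneously self-adjoint and anti-self-adjoint is zero, so $D_Kh=0$ and $[\psi_K,h]=0$ separately, and hence $Dh=D_Kh+[\psi_K,h]=0$.

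Finally, $h$ is a $D$-parallel positive self-adjoint endomorphism. Its eigenvalues are constant, since the functions $\tr h^k$ are $D$-parallel, and the associated eigenspaces are then $D$-invariant sub-bundles. Unless $h$ has a single eigenvalue these are proper invariant sub-bundles, which simplicity of $(E,D)$ forbids; therefore $h=c\,\id_E$ for a positive constant $c$, that is $H=cK$, giving uniqueness up to scaling. I expect the only genuinely delicate point to be the self-adjoint versus anti-self-adjoint splitting of $\delta_Kh=0$, as everything else is bookkeeping or a direct appeal to Assumption~\ref{assump1}.
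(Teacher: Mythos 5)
Your proposal is correct and follows the same route the paper intends: the identity is read off Lemma \ref{keylemma} by setting $D_H^\ast\psi_H=D_K^\ast\psi_K=0$ and adding the two resulting equations, and uniqueness comes from feeding the subharmonicity of the bounded function $\sigma(H,K)$ into Assumption \ref{assump1}. The paper states the corollary with no written proof beyond ``By Lemma \ref{keylemma}'', and the details you supply to close the gap --- the splitting of $\delta_Kh=0$ into the $K$-self-adjoint part $D_Kh$ and the anti-self-adjoint part $[\psi_K,h]$, forcing $Dh=0$, followed by the constant-eigenvalue/eigenbundle argument against simplicity --- are exactly the standard completion of that argument.
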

Now we consider the evolution equation
\begin{equation}\begin{split}\label{outlineflow}
H^{-1}(t)\frac{\pa H(t)}{\pa t}=2D^\ast_{H(t)}\psi_{H(t)},
\end{split}\end{equation}
a direct calculation yields
\begin{lemma}\label{distancet}
Let $(E,D)$ be a vector bundle over $(M,g)$, assume $H(t)$ and $K(t)$ be two solutions of the heat flow $(\ref{outlineflow})$, then we have
\begin{equation}\begin{split}
(\frac{\pa}{\pa t}-\Delta)\sigma(H(t),K(t))
=-|\tilde{h}(t)^{-\frac{1}{2}}\delta_{K(t)}\tilde{h}(t)|^2_{K(t)}
-|\tilde{h}^{\frac{1}{2}}(t)\delta_{H(t)}\tilde{h}^{-1}(t)|^2_{H(t)},
\end{split}\end{equation}
where $\tilde{h}(t)=K^{-1}(t)H(t)$.
\end{lemma}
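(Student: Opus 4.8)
The plan is to split the Donaldson distance as $\sigma(H(t),K(t))=\tr\tilde{h}+\tr\tilde{h}^{-1}-2\rank(E)$, with $\tilde{h}=K^{-1}(t)H(t)$, and to evaluate the heat operator $\pa_t-\Delta$ separately on $\tr\tilde{h}$ and on $\tr\tilde{h}^{-1}$. For each of these two scalar functions I would compute the time derivative directly from the flow $(\ref{outlineflow})$ and the Laplacian from Lemma $\ref{keylemma}$; the expectation is that the first-order (inner-product) terms produced by the two computations match and cancel, leaving precisely the two negative norm-squared terms.

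For the time derivatives, since both $H(t)$ and $K(t)$ solve $(\ref{outlineflow})$ we have $\pa_t H=2HD^\ast_H\psi_H$ and $\pa_t K=2KD^\ast_K\psi_K$. Differentiating $\tilde{h}=K^{-1}H$ then gives $\pa_t\tilde{h}=2\tilde{h}D^\ast_H\psi_H-2D^\ast_K\psi_K\tilde{h}$, and taking traces (using cyclicity) yields $\pa_t\tr\tilde{h}=2\tr((D^\ast_H\psi_H-D^\ast_K\psi_K)\tilde{h})$. The analogous computation for $\tilde{h}^{-1}=H^{-1}K$, via $\pa_t\tilde{h}^{-1}=-\tilde{h}^{-1}(\pa_t\tilde{h})\tilde{h}^{-1}=-2D^\ast_H\psi_H\tilde{h}^{-1}+2\tilde{h}^{-1}D^\ast_K\psi_K$, gives $\pa_t\tr\tilde{h}^{-1}=2\tr((D^\ast_K\psi_K-D^\ast_H\psi_H)\tilde{h}^{-1})$.

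For the Laplacians, I would apply Lemma $\ref{keylemma}$ at each fixed time slice: since $(\ref{keylemma1})$ and $(\ref{keylemma2})$ are purely spatial identities, the time-dependence of the metrics is irrelevant and they hold verbatim with $h$ replaced by $\tilde{h}(t)$. This gives $\Delta\tr\tilde{h}=2(D^\ast_H\psi_H-D^\ast_K\psi_K,\tilde{h})_K+|\tilde{h}^{-\frac{1}{2}}\delta_K\tilde{h}|^2_K$ and $\Delta\tr\tilde{h}^{-1}=2(D^\ast_K\psi_K-D^\ast_H\psi_H,\tilde{h}^{-1})_H+|\tilde{h}^{\frac{1}{2}}\delta_H\tilde{h}^{-1}|^2_H$. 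Subtracting the Laplacian expressions from the time-derivative expressions and summing the two contributions then produces the claimed identity, provided the cross terms cancel.

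The one point that requires care — and which I regard as the crux — is the cancellation of these cross terms: I must check that $\tr((D^\ast_H\psi_H-D^\ast_K\psi_K)\tilde{h})$ coincides with the $K$-inner product $(D^\ast_H\psi_H-D^\ast_K\psi_K,\tilde{h})_K$, and likewise that the $\tilde{h}^{-1}$ trace coincides with the corresponding $H$-inner product. This is immediate from the self-adjointness of $\tilde{h}$ with respect to $K$ (resp. of $\tilde{h}^{-1}$ with respect to $H$): pairing an arbitrary endomorphism against a self-adjoint one in the metric inner product reduces exactly to the trace, and the reality of $\pa_t\tr\tilde{h}$ removes any ambiguity of real part. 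Once this matching is in hand, the first-order terms cancel identically and only $-|\tilde{h}^{-\frac{1}{2}}\delta_K\tilde{h}|^2_K-|\tilde{h}^{\frac{1}{2}}\delta_H\tilde{h}^{-1}|^2_H$ survives, which is the assertion.
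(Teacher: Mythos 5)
Your proposal is correct and is precisely the ``direct calculation'' the paper has in mind: the paper states Lemma \ref{distancet} immediately after Lemma \ref{keylemma} with no written proof, and the intended argument is exactly yours --- differentiate $\tr\tilde{h}+\tr\tilde{h}^{-1}$ in time using the flow $(\ref{outlineflow})$, compute the Laplacians at each fixed time slice via $(\ref{keylemma1})$--$(\ref{keylemma2})$, and cancel the first-order terms using the $K$-self-adjointness of $\tilde{h}$ (resp.\ $H$-self-adjointness of $\tilde{h}^{-1}$). Your handling of the crux --- identifying $\tr\bigl((D_H^\ast\psi_H-D_K^\ast\psi_K)\tilde{h}\bigr)$ with the inner product $(D_H^\ast\psi_H-D_K^\ast\psi_K,\tilde{h})_K$ --- is exactly the point that makes the cancellation work, so nothing is missing.
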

\begin{lemma}Let $(E,D)$ be a vector bundle over $(M,g)$, along the heat flow $(\ref{outlineflow})$ it holds
\begin{equation}\begin{split}
\frac{d}{dt}||\psi_H||_{H,L^2}^2&=-2||D_H^{\ast}\psi_H||_{H,2}^2-2\int_Md\eta_H,
\end{split}\end{equation}
\begin{equation}\begin{split}\label{timemodule}
\frac{d}{dt}||D_H^{\ast}\psi_H||_{H,L^2}^2
=-2||D_HD_H^{\ast}\psi_H||_{H,L^2}^2-2||[\psi_H,D_H^\ast\psi_H]||_{H,L^2}^2+2\int_Md\tilde{\eta}_H,
\end{split}\end{equation}
\begin{equation}\begin{split}\label{modulet}
(\frac{\pa}{\pa t}-\Delta)|D_H^{\ast}\psi_H|_H^2&=-2|D_HD_H^\ast\psi_H|_H^2
-2|[\psi_H,D_H^\ast\psi_H]|_H^2,
\end{split}\end{equation}
where $\eta_H=H(D_H^{\ast}\psi_H,\ast\psi_H)$, $\tilde{\eta}_H=H(D_H^\ast\psi_H,\ast D_HD_H^\ast\psi_H)$. Furthermore, if $D$ is a flat connecton, we have
\begin{equation}\begin{split}\label{phimodulet}
(\frac{\pa}{\pa t}-\Delta)|\psi_H|_H^2
=-|[\psi_H,\psi_H]|^2_H-2(\psi_H\circ\Ric,\psi_H)_H-2|\nabla_H\psi_H|^2_H,
\end{split}\end{equation}
where $\Ric$ is the Ricci transformation of $(M,g)$.
\end{lemma}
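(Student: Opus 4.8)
The plan is to feed the flow equation $H^{-1}\pa_t H=h^{-1}\pa_t h=2D_H^\ast\psi_H$ directly into the variation formulas $(\ref{DHt})$, $(\ref{psiHt})$, $(\ref{DHstarpsiHt})$ already proved. Writing $\Phi:=D_H^\ast\psi_H$ and $\psi_{H,i}:=\psi_H(\tfrac{\pa}{\pa x^i})$, and using $[\Phi,\Phi]=0$, this produces the three evolution identities
\begin{equation*}
\pa_t\psi_H=-D_H\Phi+[\psi_H,\Phi],\qquad \pa_t D_H=D_H\Phi-[\psi_H,\Phi],\qquad \pa_t\Phi=-D_H^\ast D_H\Phi-g^{ij}[\psi_{H,i},[\psi_{H,j},\Phi]].
\end{equation*}
These are the only analytic inputs beyond integration by parts and a Bochner formula. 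A useful preliminary is that, because $\psi_H$ and $\Phi$ are $H$-self-adjoint, the pointwise norms are $|\psi_H|^2_H=g^{ij}\tr(\psi_{H,i}\psi_{H,j})$ and $|\Phi|^2_H=\tr(\Phi^2)$, expressions whose only $t$-dependence sits inside $\psi_H,\Phi$; hence $\pa_t|\psi_H|^2_H=2H(\pa_t\psi_H,\psi_H)$ and $\pa_t|\Phi|^2_H=2H(\pa_t\Phi,\Phi)$ with no extra metric-variation terms.

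For the two $L^2$ identities I would integrate $\frac{d}{dt}\int_M|\psi_H|^2_H$ and $\frac{d}{dt}\int_M|\Phi|^2_H$ and substitute. The principal pieces $-2\int_M H(D_H\Phi,\psi_H)$ and $-2\int_M H(D_H^\ast D_H\Phi,\Phi)$ are treated by the Stokes identity $H(D_H\alpha,\beta)\dvol_g-H(\alpha,D_H^\ast\beta)\dvol_g=d\,H(\alpha,\ast\beta)$, which yields $-2\|\Phi\|^2_{H,L^2}-2\int_M d\eta_H$ and $-2\|D_H\Phi\|^2_{H,L^2}+2\int_M d\tilde\eta_H$ with exactly the stated exact forms $\eta_H=H(\Phi,\ast\psi_H)$ and $\tilde\eta_H=H(\Phi,\ast D_H\Phi)$. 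Two algebraic facts then close the computation: the cross term $g^{ij}\tr([\psi_{H,i},\Phi]\psi_{H,j})$ vanishes by cyclicity of the trace together with the symmetry of $g^{ij}$ (so nothing extra survives in the $\psi_H$-identity), while the double commutator obeys $g^{ij}\tr([\psi_{H,i},[\psi_{H,j},\Phi]]\Phi)=|[\psi_H,\Phi]|^2_H$, using that each $[\psi_{H,j},\Phi]$ is anti-self-adjoint. This gives $(\ref{timemodule})$.

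The pointwise identity $(\ref{modulet})$ I would obtain by comparing $\pa_t|\Phi|^2_H$ with $\Delta|\Phi|^2_H$. Since $\Phi$ is a section (a $0$-form), the Bochner formula is $\tfrac12\Delta|\Phi|^2_H=-H(D_H^\ast D_H\Phi,\Phi)+|D_H\Phi|^2_H$, with no curvature correction. Subtracting this from $\tfrac12\pa_t|\Phi|^2_H=-H(D_H^\ast D_H\Phi,\Phi)-g^{ij}H([\psi_{H,i},[\psi_{H,j},\Phi]],\Phi)$ cancels the $D_H^\ast D_H\Phi$ terms and, via the same double-commutator identity, leaves precisely $-2|D_H\Phi|^2_H-2|[\psi_H,\Phi]|^2_H$, i.e. $(\ref{modulet})$.

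The hard part will be the flat-connection identity $(\ref{phimodulet})$, which is genuinely a Bochner--Weitzenb\"ock computation for the $\End(E)$-valued $1$-form $\psi_H$. Here $\tfrac12\pa_t|\psi_H|^2_H=-H(D_HD_H^\ast\psi_H,\psi_H)$ after discarding the commutator cross term as above, while $\tfrac12\Delta|\psi_H|^2_H=-H(\nabla_H^\ast\nabla_H\psi_H,\psi_H)+|\nabla_H\psi_H|^2_H$. The plan is to split the flatness condition $0=D^2=F_{D_H}+D_H\psi_H+\psi_H\wedge\psi_H$ into self-adjoint and anti-self-adjoint parts, giving $D_H\psi_H=0$ and $F_{D_H}=-\psi_H\wedge\psi_H$; then commute covariant derivatives (in a geodesic frame) to pass from $\nabla_H^\ast\nabla_H\psi_H$ to $D_HD_H^\ast\psi_H$. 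The first flatness relation kills the $D_H^\ast D_H\psi_H$ contribution, and the resulting Weitzenb\"ock curvature term splits into a Riemannian piece contracting to $(\psi_H\circ\Ric,\psi_H)_H$ and a bundle piece which, after substituting $F_{D_H}=-\psi_H\wedge\psi_H$ and using $\tr([X,Y]Z)=\tr(X[Y,Z])$, assembles into $\tfrac12|[\psi_H,\psi_H]|^2_H$. Collecting terms and multiplying by $2$ yields $(\ref{phimodulet})$. The delicate point is fixing the sign and normalization of the Weitzenb\"ock curvature operator and checking that the bundle term produces $|[\psi_H,\psi_H]|^2_H$ with the correct constant (which depends on the convention relating $[\psi_H,\psi_H]$ and $\psi_H\wedge\psi_H$); the Ricci piece is then immediate from the Riemannian part of the same formula.
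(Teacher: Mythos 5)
Your proposal is correct and follows essentially the same route as the paper's proof: substituting the flow into the variation formulas $(\ref{DHt})$, $(\ref{psiHt})$, $(\ref{DHstarpsiHt})$, killing the cross term $([\psi_H,D_H^\ast\psi_H],\psi_H)_H$ by cyclicity of the trace, converting the double commutator into $|[\psi_H,D_H^\ast\psi_H]|_H^2$ via anti-self-adjointness, handling the $L^2$ identities by Stokes with the stated exact forms, and proving $(\ref{phimodulet})$ by the Weitzenb\"ock comparison of $D_HD_H^\ast\psi_H$ with $\nabla_H^\ast\nabla_H\psi_H$ using the flatness splitting $D_H\psi_H=0$, $D_H^2=-\tfrac12[\psi_H,\psi_H]$. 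Your explicit remark that the $H$-self-adjointness of $\psi_H$ and $D_H^\ast\psi_H$ eliminates metric-variation terms when differentiating the norms is a point the paper uses implicitly, and your sign bookkeeping (bundle term $\tfrac12|[\psi_H,\psi_H]|^2_H$ before doubling) lands on the paper's stated constants.
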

\begin{proof}
Firstly, applying the equality $(\ref{psiHt})$ yields
\begin{equation}\begin{split}
\frac{d}{dt}||\psi_H||_{H,L^2}^2
&=2\int_M(-\frac{1}{2}D_H(H^{-1}\frac{\pa H}{\pa t})+\frac{1}{2}[\psi_H,H^{-1}\frac{\pa H}{\pa t}],\psi_H)_{H}\dvol_g
\\&=-2\int_M(D_HD_H^{\ast}\psi_H,\psi_H)_{H}\dvol_g
\\&=-2||D_H^{\ast}\psi_H||_{H,2}^2-2\int_Md\eta_H.
\end{split}\end{equation}
Next, $(\ref{DHstarpsiHt})$ indicates that
\begin{equation}\begin{split}
\frac{\pa D_{H}^\ast\psi_{H}}{\pa t}=-D_H^\ast D_HD_H^\ast\psi_H-g^{ij}[\psi_H(\frac{\pa}{\pa x^i}),[\psi_H(\frac{\pa}{\pa x^j}),D_H^\ast\psi_H]],
\end{split}\end{equation}
as it is easy to check $D_H^\ast\psi_H$ is self-adjoint with respect to $H$, it then follows
\begin{equation}\begin{split}
\frac{d}{dt}||D_H^{\ast}\psi_H||_{H,L^2}^2
&=-2\int_M(D_H^\ast D_HD_H^\ast\psi_H,D_H^\ast\psi_H)_H\dvol_g
\\&-2\int_Mg^{ij}([\psi_H(\frac{\pa}{\pa x^i}),[\psi_H(\frac{\pa}{\pa x^j}),D_H^\ast\psi_H]],D_H^{\ast}\psi_H)_H\dvol_g
\\&=-2||D_HD_H^{\ast}\psi_H||_{H,L^2}^2-2||[\psi_H,D_H^\ast\psi_H]||_{H,L^2}^2+2\int_Md\tilde{\eta}_H,
\end{split}\end{equation}
and
\begin{equation}\begin{split}
(\frac{\pa}{\pa t}-\Delta)|D_H^{\ast}\psi_H|_H^2
&=-2(D_H^\ast D_HD_H^\ast\psi_H,D_H^{\ast}\psi_H)_H
\\&-2g^{ij}([\psi_H(\frac{\pa}{\pa x^i}),[\psi_H(\frac{\pa}{\pa x^j}),D_H^\ast\psi_H]],D_H^{\ast}\psi_H)_H
\\&+2(D_H^\ast D_HD_H^\ast\psi_H,D_H^\ast\psi_H)_H-2|D_HD_H^\ast\psi_H|_H^2
\\&=-2|D_HD_H^{\ast}\psi_H|_H^2-2|[\psi_H,D_H^\ast\psi_H]|_H^2.
\end{split}\end{equation}
\par On the other hand, we have
\begin{equation}\begin{split}
(\frac{\pa}{\pa t}-\Delta)|\psi_H|_H^2
&=2(-\frac{1}{2}D_H(H^{-1}\frac{\pa H}{\pa t})+\frac{1}{2}[\psi_H,H^{-1}\frac{\pa H}{\pa t}],\psi_H)_H
\\&+2(\nabla_H^\ast\nabla_H\psi_H,\psi_H)_H-2|\nabla_H\psi_H|_H^2
\\&=-2(D_HD_H^\ast\psi_H-\nabla_H^\ast\nabla_H\psi_H,\psi_H)_H-2|\nabla_H\psi_H|_H^2,
\end{split}\end{equation}
Now using the flatness of $D$: $D_H\psi_H=0$ and $D_H^2=-\frac{1}{2}[\psi_H,\psi_H]$, it is equal to
\begin{equation}\begin{split}
&2g^{ij}(D_H^2(\cdot,\frac{\pa}{\pa x^i})(\psi_H(\frac{\pa}{\pa x^j}))-\psi_H(R_{\nabla}(\cdot,\frac{\pa}{\pa x^i})(\frac{\pa}{\pa x^j})),\psi_H)_H
-2|\nabla_H\psi_H|_H^2
\\&=-(g^{ij}[[\psi_H,\psi_H(\frac{\pa}{\pa x^i})],\psi_H(\frac{\pa}{\pa x^j})]+2\psi_H\circ\Ric,\psi_H)_H
-2|\nabla_H\psi_H|_H^2
\\&=-|[\psi_H,\psi_H]|^2_H-2(\psi_H\circ\Ric,\psi_H)_H-2|\nabla_H\psi_H|^2_H.
\end{split}\end{equation}
\end{proof}
\section{Analytically stability and existence of Poisson metrics}
Given a vector bundle $(E,D)$ over a compact Riemannian manifold $(X,g)$, consider the following heat flow
\begin{equation}\label{flow1}\begin{split}
\left\{ \begin{array}{ll}
H^{-1}\frac{\pa H}{\pa t}=2D^\ast_{H}\psi_{H},\\
H(0)=K.
\end{array}\right.
\end{split}\end{equation}
And if $X$ has nonempty smooth boundary $\pa X$, for any given compatible data $\tilde{H}$, we impose the boundary condition
$H(t)|_{\pa X}=\tilde{H}$ in the above system.
\par With the calculations in the last sections in hand and following \cite{Do1985}, we have
\begin{proposition}\label{longtimes}
Let $(E,D)$ be a vector bundle over a compact Riemannian manifold $(X,g)$(with possibly nonempty boundary),
then heat flow $(\ref{flow1})$ admits unique solution defined on $[0,\infty)$.
\end{proposition}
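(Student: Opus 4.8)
The plan is to follow Donaldson's strategy for the Hermitian--Einstein heat flow \cite{Do1985}, adapted to the flow $(\ref{flow1})$, proceeding through short-time existence, global a priori estimates, and uniqueness. The whole argument is essentially a maximum-principle argument made possible by the differential identities already established.

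First I would recast $(\ref{flow1})$ as an evolution equation for $h=K^{-1}H$. Since $H^{-1}\frac{\pa H}{\pa t}=h^{-1}\frac{\pa h}{\pa t}$, the flow reads $\frac{\pa h}{\pa t}=2hD_H^\ast\psi_H$, and by $(\ref{DHKstarpsiHK})$ the right-hand side is a quasilinear second order operator in $h$ whose principal part is $g^{ij}D_{\pa_i}(h^{-1}\delta_{K,\pa_j}h)$. A computation of the linearization shows that the top-order symbol is a positive multiple of $g^{ij}\xi_i\xi_j$ times the identity, so the flow is forward parabolic. Short-time existence and uniqueness of a smooth solution on a maximal interval $[0,T_{\max})$, with initial datum $K$ and, in the boundary case, the Dirichlet condition $H(t)|_{\pa X}=\tilde{H}$, then follow from the standard theory of nonlinear parabolic systems on compact manifolds with compatible boundary data.

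The heart of the argument is to rule out finite-time blow-up, i.e. to show $T_{\max}=\infty$. The key observation is the inequality $(\ref{modulet})$, which holds for arbitrary $D$: along $(\ref{flow1})$ one has $(\frac{\pa}{\pa t}-\Delta)|D_H^\ast\psi_H|_H^2\leq0$, so $|D_H^\ast\psi_H|_H^2$ is a subsolution of the heat equation. On the compact $X$ the parabolic maximum principle (controlling the boundary values through $\tilde{H}$ when $\pa X\neq\emptyset$, and otherwise only the initial datum) bounds $|D_H^\ast\psi_H|_H$ uniformly on $[0,T]$ for every finite $T$. Since $|H^{-1}\frac{\pa H}{\pa t}|_H=2|D_H^\ast\psi_H|_H$, a Gr\"onwall estimate in $t$ shows that $h$ and $h^{-1}$ remain mutually bounded with $K$ on any finite interval, so the flow cannot degenerate in finite time; alternatively, $(\ref{keylemma1})$ gives the same $C^0$ control for $\tr h$ and $\tr h^{-1}$ directly via the maximum principle. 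Once $h,h^{-1}$ are bounded in $C^0$ the equation is uniformly parabolic, and interior (and, in the boundary case, boundary) parabolic Schauder estimates bootstrap to uniform $C^\infty$ bounds on $[0,T]$; standard continuation then extends the solution to $[0,\infty)$.

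Finally, uniqueness follows from Lemma \ref{distancet}: if $H(t)$ and $K(t)$ are two solutions of $(\ref{flow1})$ with the same initial and boundary data, then $(\frac{\pa}{\pa t}-\Delta)\sigma(H(t),K(t))\leq0$, and $\sigma$ vanishes at $t=0$ and on $\pa X$, so the maximum principle forces $\sigma(H(t),K(t))\equiv0$, whence $H(t)=K(t)$. The main obstacle I expect is the global-in-time $C^0$ estimate up to the boundary: while $(\ref{modulet})$ renders the interior control almost immediate, the boundary case requires compatible Dirichlet data and boundary a priori estimates in order to invoke the maximum principle and the Schauder theory cleanly, and it is there that the regularity must be handled with care.
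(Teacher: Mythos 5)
Your proposal is correct and follows essentially the same route the paper intends: the paper's "proof" is simply an appeal to Donaldson's method in \cite{Do1985} together with the calculations of Section 2, and you have filled in exactly those ingredients --- parabolicity of the equation for $h=K^{-1}H$ via $(\ref{DHKstarpsiHK})$, the maximum principle applied to $|D_H^\ast\psi_H|_H^2$ through $(\ref{modulet})$ (with the Dirichlet condition forcing $D_H^\ast\psi_H=0$ on $\pa X$ for $t>0$), the resulting $C^0$ control and bootstrap, and uniqueness via Lemma \ref{distancet}. The only place you compress a standard but nontrivial step is the passage from the $C^0$ bound on $h,h^{-1}$ (plus the bound on $D_H^\ast\psi_H$) to higher-order estimates, which for this quasilinear flow with quadratic gradient terms requires the usual intermediate gradient estimate before Schauder theory applies, as in Donaldson's Lemma 19.
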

\begin{proposition}\label{approximateharmonic}
Assume $(E,D)$ be a vector bundle over a compact Riemannian manifold $(M,g)$, then for any $\epsilon>0$, there exists a
fiber metric $H_\epsilon$ such that $||D_{H_\epsilon}\psi_{H_\epsilon}||_{L^\infty}\leq\epsilon$.
\end{proposition}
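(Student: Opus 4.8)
The plan is to run the heat flow $(\ref{flow1})$ with $H(0)=K$ for the given (arbitrary, not necessarily stable) metric $K$, and to extract $H_\epsilon$ from a sufficiently late time slice. By Proposition \ref{longtimes} the flow admits a solution $H(t)$ on all of $[0,\infty)$; here $M$ is closed, so the divergence terms $\int_M d\eta_H$ and $\int_M d\tilde\eta_H$ appearing in the evolution identities vanish by Stokes' theorem, and all the integration-by-parts computations are unobstructed. This is the reason no stability or simplicity hypothesis is needed.

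First I would record two monotonicity facts. With the boundary terms gone, the variational identity for $\|\psi_H\|_{H,L^2}^2$ and the identity $(\ref{timemodule})$ reduce to
\[
\frac{d}{dt}\|\psi_H\|_{H,L^2}^2=-2\|D_H^\ast\psi_H\|_{H,L^2}^2\leq 0,
\]
\[
\frac{d}{dt}\|D_H^\ast\psi_H\|_{H,L^2}^2=-2\|D_HD_H^\ast\psi_H\|_{H,L^2}^2-2\|[\psi_H,D_H^\ast\psi_H]\|_{H,L^2}^2\leq 0.
\]
The first shows that $\|\psi_{H(t)}\|_{H,L^2}^2$ is non-increasing and bounded below by $0$, so $\int_0^\infty\|D_H^\ast\psi_H\|_{H,L^2}^2\,dt<\infty$; the second shows that $t\mapsto\|D_H^\ast\psi_H\|_{H,L^2}^2$ is itself non-increasing. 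A non-negative, non-increasing, integrable function on $[0,\infty)$ must tend to $0$, hence $\|D_{H(t)}^\ast\psi_{H(t)}\|_{H,L^2}^2\to 0$ as $t\to\infty$.

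Second, I would upgrade this $L^2$-decay to $L^\infty$-decay. Setting $u(x,t)=|D_{H(t)}^\ast\psi_{H(t)}|_{H(t)}^2$, the identity $(\ref{modulet})$ says precisely that $u\geq 0$ is a subsolution of the heat equation, $(\pa_t-\Delta)u\leq 0$ on $M\times[0,\infty)$, with $\Delta$ the Laplace-Beltrami operator of the fixed metric $g$. Applying the parabolic mean value inequality (local boundedness for parabolic subsolutions) on the closed manifold $M$, for $t\geq 1$ one gets
\[
\sup_M u(\cdot,t)\leq C\int_{t-1}^{t}\int_M u\,\dvol_g\,ds=C\int_{t-1}^{t}\|D_{H(s)}^\ast\psi_{H(s)}\|_{H,L^2}^2\,ds,
\]
with $C$ depending only on $(M,g)$. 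The right-hand side tends to $0$ by the $L^2$-decay just established, so $\|D_{H(t)}^\ast\psi_{H(t)}\|_{L^\infty}\to 0$; choosing $T$ with $\|D_{H(T)}^\ast\psi_{H(T)}\|_{L^\infty}\leq\epsilon$ and taking $H_\epsilon=H(T)$ completes the argument.

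The main obstacle is exactly this passage from $L^2$ to $L^\infty$: the monotonicity identities only control integrated quantities, and the bare maximum principle applied to $(\ref{modulet})$ gives merely that $\sup_M u(\cdot,t)$ is non-increasing, which does not by itself force it to zero. It is the subsolution structure of $(\ref{modulet})$ combined with the Moser-type parabolic estimate that bridges the gap. A subsidiary point to keep in mind is that the whole argument relies on the vanishing of the boundary terms, so it is genuinely restricted to $M$ without boundary; with a boundary the surviving integration-by-parts contributions would have to be controlled separately and the clean monotonicity would be lost.
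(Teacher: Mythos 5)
Your proposal is correct and follows essentially the same route as the paper: run the heat flow $(\ref{flow1})$ on the closed manifold, deduce $\|D_{H(t)}^\ast\psi_{H(t)}\|_{H,L^2}^2\to 0$ from the gradient-flow structure together with the monotonicity in $(\ref{timemodule})$, and then upgrade to $L^\infty$ via the subsolution inequality $(\ref{modulet})$ and parabolic Moser iteration. The only difference is that you spell out the details (vanishing of the Stokes terms, the integrable-plus-monotone argument, and the mean value inequality) that the paper's proof leaves implicit.
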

\begin{proof}Let $H=H(t)$ be the solution to $(\ref{flow1})$, defined on $[0,+\infty)$. Since $(\ref{flow1})$ is the negative gradient flow
of the energy $||\psi_H||_{H,L^2}$,
by $(\ref{timemodule})$ we know $||D_H^{\ast}\psi_H||_{H,L^2}^2\rightarrow0$, as $t$ goes infinity.
Then the proof can be finished by using $(\ref{modulet})$ and parabolic Moser's iteration.
\end{proof}
\begin{proposition}\label{Dirichlet1}
Let $(E,D)$ be a vector bundle over a compact Riemannian manifold $(X,g)$ with smooth boundary $\pa X$. Then for any given data $K$,
there exists a unique harmonic metric $H$ on $(E,D)$ such that $H=K$ on $\pa X$.
\end{proposition}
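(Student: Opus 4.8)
The plan is to obtain $H$ as the limit of the heat flow $(\ref{flow1})$ on $X$ subject to the time-independent Dirichlet condition $H(t)|_{\pa X}=K$, with initial value $H(0)=K$. Proposition \ref{longtimes} already supplies a unique solution $H(t)$ on $[0,\infty)$, so the whole problem reduces to showing that $H(t)$ converges, as $t\to\infty$, to a metric $H_\infty$ solving $D_{H_\infty}^\ast\psi_{H_\infty}=0$ with $H_\infty|_{\pa X}=K$; uniqueness I treat separately at the end. The first observation is that, since the boundary data are frozen in time, $\frac{\pa H}{\pa t}=0$ on $\pa X$, so the flow equation $H^{-1}\frac{\pa H}{\pa t}=2D_H^\ast\psi_H$ forces $D_H^\ast\psi_H=0$ on $\pa X$ for every $t>0$.

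The heart of the matter is to upgrade this boundary vanishing into decay of $D_H^\ast\psi_H$ in the interior, and this is the step I expect to be the main obstacle. For it I would use the evolution identity $(\ref{modulet})$, which shows that $u:=|D_H^\ast\psi_H|_H^2$ is a nonnegative subsolution of the heat equation, $(\frac{\pa}{\pa t}-\Delta)u\leq0$, vanishing on $\pa X$ for $t>0$. Comparing $u$ with the solution $v$ of the linear Dirichlet heat equation on $X$ with the same initial value $v(0)=u(0)=|D_K^\ast\psi_K|^2$ (bounded, as $K$ is smooth on $\ol X$) and zero lateral boundary values, the parabolic maximum principle gives $0\leq u\leq v$. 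Because the first Dirichlet eigenvalue $\lambda_1$ of $(X,g)$ is strictly positive, $v$, and hence $u$, decays exponentially, yielding $\sup_X|D_H^\ast\psi_H|(t)\leq Ce^{-\lambda_1 t/2}$. This exponential, as opposed to merely $L^2$-in-time, decay is precisely what the boundary condition buys; without the positive spectral gap one only gets polynomial decay, which is too weak for the convergence step.

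Granting this, convergence follows cleanly. The bound $\int_0^\infty\|H^{-1}\frac{\pa H}{\pa t}\|_{C^0}\,dt<\infty$ keeps the $H(t)$ mutually bounded, confines them to a fixed compact family of positive fiber metrics, and makes them Cauchy in $C^0$, converging to a positive-definite $H_\infty$ with $H_\infty|_{\pa X}=K$. (Alternatively, once the uniform bound on $H(t)$ is in hand, one can apply Lemma \ref{distancet} to the pair $H(t)$ and the time-shift $H(t+s)$: then $\sigma(H(t),H(t+s))$ is a subsolution vanishing on $\pa X$, and the same Dirichlet comparison gives $\sup_X\sigma(H(t),H(t+s))\leq Ce^{-\lambda_1 t}$ uniformly in $s$, which is the Cauchy property directly.) Uniform parabolicity on this compact family lets me invoke interior and boundary parabolic Schauder estimates to bootstrap to smooth convergence; passing to the limit in $\sup_X|D_H^\ast\psi_H|\to0$ then shows $D_{H_\infty}^\ast\psi_{H_\infty}=0$, so $H_\infty$ is the desired harmonic metric.

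For uniqueness, suppose $H_1,H_2$ are harmonic with $H_1=H_2=K$ on $\pa X$, and set $h=H_2^{-1}H_1$. By Corollary \ref{laplacedistance}, $\Delta\sigma(H_1,H_2)=|h^{-\frac{1}{2}}\delta_{H_2}h|^2+|h^{\frac{1}{2}}\delta_{H_1}h^{-1}|_{H_1}^2\geq0$, so $\sigma(H_1,H_2)$ is subharmonic; since it vanishes on $\pa X$ and is nonnegative throughout $X$, the maximum principle forces $\sigma(H_1,H_2)\equiv0$, that is $H_1=H_2$.
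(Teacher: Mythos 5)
Your proposal is correct, and its skeleton (run the Dirichlet heat flow of Proposition \ref{longtimes}, prove decay of $D_H^\ast\psi_H$, convert decay into the $C^0$-Cauchy property via $\sigma$, bootstrap by parabolic regularity, and get uniqueness from Corollary \ref{laplacedistance} plus the maximum principle) matches the paper's; but your central a priori estimate is obtained by a genuinely different device. The paper never uses the spectral gap: it solves the \emph{elliptic} barrier problem $\Delta u=-|D_K^\ast\psi_K|$, $u|_{\pa X}=0$, applies the parabolic maximum principle to $v(x,t)=\int_0^t|D_{H(s)}^\ast\psi_{H(s)}|_{H(s)}ds-u(x)$ (which is a subsolution by $(\ref{modulet})$ and vanishes on the lateral boundary precisely because of the frozen boundary data, the same observation you make), and concludes the time-integral bound $\int_0^t|D_{H(s)}^\ast\psi_{H(s)}|_{H(s)}ds\leq\max_X u$ uniformly in $t$; this $L^1$-in-time bound is then fed into the same inequality $\sigma(H(t_1),H(t_2))\leq2\rank(E)\bigl(e^{2\int_{t_1}^{t_2}|D_{H(s)}^\ast\psi_{H(s)}|ds}-1\bigr)$ that you use. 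Your route instead compares $|D_H^\ast\psi_H|_H^2$ with the linear Dirichlet heat semigroup and invokes $\lambda_1>0$. What each buys: your estimate is stronger (sup-norm exponential decay), and it makes the harmonicity of the limit immediate, since $\sup_X|D_{H(t)}^\ast\psi_{H(t)}|\to0$ uniformly; the paper's bound only controls the time-integral pointwise, so it must lean on regularity theory (the argument of Lemma 3.3 in \cite{Zh2005}) to see that the subsequential limit is harmonic. Conversely, the paper's barrier trick uses nothing beyond the maximum principle---no spectral theory---and is the standard device of \cite{Do1992,Zh2005}, which also transfers to settings where a spectral gap is unavailable but an elliptic barrier still exists.

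One technical caveat, which you should patch but which does not damage the architecture: your comparison ``with the same initial value $v(0)=u(0)=|D_K^\ast\psi_K|^2$'' tacitly assumes regularity of $u$ at the parabolic corner $\pa X\times\{0\}$. If $D_K^\ast\psi_K\neq0$ somewhere on $\pa X$, the first-order compatibility condition for the flow fails there (the equation wants $\pa_tH\neq 0$ on $\pa X$ at $t=0$, while the boundary condition forces $\pa_tH=0$), so $u$ is discontinuous, and possibly unbounded, near that corner, and the naive maximum-principle comparison from $t=0$ is not literally justified. The repair is immediate: since the flow is smooth on $\ol{X}\times(0,\infty)$, start the comparison at $t=1$ with initial value $u(\cdot,1)\in L^\infty$; exponential decay for $t\geq1$ is all your convergence step requires. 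The paper's own proof passes over exactly the same corner point, so this is a shared refinement rather than a defect specific to your argument.
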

\begin{proof}
Due to Corollary \ref{laplacedistance}, it remain to prove the existence statement. By the standard elliptic equation theory,
let $u$ be the solution to the following Dirichlet problem:
\begin{equation}\begin{split}
\Delta u=-|D_K^\ast\psi_K|,u|_{\pa X}=0.
\end{split}\end{equation}
Proposition \ref{longtimes} ensures the existence of long-time solution $H(t)$ of $(\ref{flow1})$ with boundary condition $H(t)|_{\pa X}=K$. Set
\begin{equation}\begin{split}
v(x,t)=\int_0^t|D_{H(s)}^\ast\psi_{H(s)}|_{H(s)}ds-u,
\end{split}\end{equation}
by $(\ref{modulet})$ it is easy to see
\begin{equation}\begin{split}
\left\{ \begin{array}{ll}
(\frac{\pa}{\pa t}-\Delta)v(x,t)\leq0,\\
v(x,0)=-u(x),\\
v(x,t)|_{\pa X}=0.
\end{array}\right.
\end{split}\end{equation}
Then the maximum principle gives
\begin{equation}\begin{split}\label{dirchlet1}
\int_0^t|D_{H(s)}^\ast\psi_{H(s)}|_{H(s)}ds\leq\max\limits_X u.
\end{split}\end{equation}
\par Now for any $t_1\leq t\leq t_2$ and $\tilde{h}(t)=H^{-1}(t_1)H(t),\hat{h}(t)=H^{-1}(t)H(t_1)$, it is straightforward to check
\begin{equation}\begin{split}
&|\frac{\pa}{\pa t}\log\tr\tilde{h}(t)|
\leq2|D_{H(t)}^\ast\psi_{H(t)}|_{H(t)},
|\frac{\pa}{\pa t}\log\tr\hat{h}(t)|
\leq2|D_{H(t)}^\ast\psi_{H(t)}|_{H(t)}.
\end{split}\end{equation}
A simple integration yields
\begin{equation}\begin{split}\label{dirchlet2}
\sigma(H(t_1),H(t_2))\leq2\rank(E)(e^{2\int_{t_1}^{t_2}|D_{H(s)}^\ast\psi_{H(s)}|_{H(s)}ds}-1).
\end{split}\end{equation}
\par So by combining $(\ref{dirchlet1})$ and $(\ref{dirchlet2})$, we know that $H(t)$ converge in $C^0$-topology to $H_\infty$
as $t\rightarrow\infty$. Using $(\ref{dirchlet1})$ and arguing as Lemma 3.3 in \cite{Zh2005}, the elliptic regularity theory implies
that there exists a subsequence $H(t_i)$ converging to $H_\infty$ in $C^\infty$-topology and $H_\infty$ is a harmonic metric satisfying
the desired boundary condition.
\end{proof}
\begin{proposition}\label{Dirichlet2}
Let $(E,D)$ be a vector bundle over a compact Riemannian manifold $(X,g)$ with smooth boundary $\pa X$. Then for any given data $K$,
there exists a unique Poisson metric $H$ on $(E,D)$ such that $H=K$ on $\pa X$ and $\det H=\det K$.
\end{proposition}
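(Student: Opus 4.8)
The plan is to reduce this Poisson Dirichlet problem to the harmonic one already settled in Proposition \ref{Dirichlet1}, exploiting the observation that the Poisson equation constrains only the trace-free part of $D_H^\ast\psi_H$, and that this trace-free part is unchanged under a conformal rescaling $H\mapsto e^{2f}H$ of the fiber metric. First I would invoke Proposition \ref{Dirichlet1} to obtain the unique harmonic metric $H_0$ with $H_0=K$ on $\pa X$, and then rescale it by a conformal factor chosen to fix the determinant; because that factor is forced to vanish on the boundary, the condition $H=K$ on $\pa X$ is automatically preserved and no additional equation has to be solved.

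For the existence I would set $\tilde H=e^{2f}H$ with $f$ smooth, so that $\tilde h:=K^{-1}\tilde H=e^{2f}h$. Using $(\ref{DHDK})$, $(\ref{psiHpsiK})$ and the Leibniz rule one gets $\tilde h^{-1}\delta_K\tilde h=h^{-1}\delta_K h+2\,df\,\id_E$, hence $D_{\tilde H}=D_H+df\,\id_E$ and $\psi_{\tilde H}=\psi_H-df\,\id_E$. Since $df\,\id_E$ is central, neither the induced connection nor the induced fiber metric on $\End(E)$ changes, so $D_{\tilde H}^\ast=D_H^\ast$ on $\End(E)$-valued forms and $D_H^\ast(df\,\id_E)=-(\Delta f)\id_E$; therefore
\[
D_{\tilde H}^\ast\psi_{\tilde H}=D_H^\ast\psi_H+(\Delta f)\id_E .
\]
This identity shows the trace-free part of $D_H^\ast\psi_H$ is a conformal invariant, so every conformal rescaling of the harmonic metric $H_0$ is a Poisson metric. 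It then remains to choose $f$: since $\det\tilde h=e^{2\rank(E)f}\det h_0$ with $h_0=K^{-1}H_0$, I would take $f=-\frac{1}{2\rank(E)}\log\det h_0$, which yields $\det\tilde H=\det K$ and is smooth; moreover $h_0=\id_E$ on $\pa X$ gives $f|_{\pa X}=0$, so $\tilde H|_{\pa X}=H_0|_{\pa X}=K$. Thus $\tilde H$ is the required Poisson metric.

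For the uniqueness I would take two such Poisson metrics $H_1,H_2$ and set $h=H_1^{-1}H_2$, so that $\det h=1$ and $h=\id_E$ on $\pa X$. Writing $D_{H_i}^\ast\psi_{H_i}=c_i\,\id_E$, the trace relation following from $(\ref{DHKstarpsiHK})$ gives $\tr D_{H_2}^\ast\psi_{H_2}-\tr D_{H_1}^\ast\psi_{H_1}=\frac12\Delta\log\det h=0$, so $c_1=c_2$ and hence $D_{H_2}^\ast\psi_{H_2}=D_{H_1}^\ast\psi_{H_1}$. Feeding this into $(\ref{keylemma1})$ with $K$ replaced by $H_1$ makes the left-hand side vanish and leaves $\Delta\tr h=|h^{-\frac12}\delta_{H_1}h|^2\geq0$, i.e.\ $\tr h$ is subharmonic. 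The maximum principle then gives $\tr h\leq\max_{\pa X}\tr h=\rank(E)$, while $\det h=1$ and the arithmetic-geometric mean inequality give $\tr h\geq\rank(E)$; hence $\tr h\equiv\rank(E)$, and equality in that inequality forces $h=\id_E$, that is $H_1=H_2$.

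The step I expect to be the main obstacle is the conformal computation leading to the displayed identity above, namely checking that altering the connection by the central one-form $df\,\id_E$ perturbs $D_H^\ast\psi_H$ only by $(\Delta f)\id_E$; granting this conformal invariance of the trace-free part, both the existence (reduction to Proposition \ref{Dirichlet1} followed by an algebraic choice of $f$) and the uniqueness (subharmonicity of $\tr h$ together with $\det h=1$) are routine.
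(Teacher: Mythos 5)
Your proposal is correct and takes essentially the same route as the paper: the paper also reduces to Proposition \ref{Dirichlet1} and sets $H=\tilde He^{f}$ with $f=\frac{\log\det(\tilde H^{-1}K)}{\rank(E)}$, which is exactly your conformal rescaling (up to the harmless factor $2$ in the exponent), using the same observation that $D_{H}^\ast\psi_{H}$ changes only by $(\Delta f)\id_E$ so that the rescaled harmonic metric is Poisson and matches the boundary data. Your uniqueness argument (equality of the constants $c_i$ via the trace identity, then $(\ref{keylemma1})$ with reference metric $H_1$, the maximum principle and arithmetic--geometric mean) is a correct and welcome spelling-out of a point the paper's proof leaves implicit.
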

\begin{proof}
Due to Proposition \ref{Dirichlet1}, there exists a unique harmonic metric $\tilde{H}$ such that $\tilde{H}=K$ on $\pa X$. Set $H=\tilde{H}e^f$,
it follows
\begin{equation}\begin{split}
D_{H}^\ast\psi_{H}&=-\frac{1}{2}D_{\tilde{H}}^\ast(df\otimes\id_E)=\frac{\tr D_H^\ast\psi_H}{\rank(E)}\id_E.
\end{split}\end{equation}
Now we set $f=\frac{\log\det(\tilde{H}^{-1}K)}{\rank(E)}$, then we have $H=K$ on $\pa X$ and $\det H=\det K$.
\end{proof}
On each $M_s$, the following Dirichlet problem is solvable,
\begin{equation}\begin{split}
\left\{ \begin{array}{ll}
D_{H_s}^\ast\psi_{H_s}=c_s\id_E,\\
H|_{\pa M_s}=K,\\
\det h_s=1,h_s=K^{-1}H_s.
\end{array}\right.
\end{split}\end{equation}
Due to the elementary inequality $\log(\frac{\tr h_s}{\rank(E)})\geq\log(\det h_s)^{\frac{1}{r}}=0$ and the boundary condition, we have
\begin{equation}\begin{split}
\frac{\pa\log(\frac{\tr h_s}{\rank(E)})}{\pa\vec{n}}\geq0,
\end{split}\end{equation}
where $\vec{n}$ is the inward unit normal vector field at $\pa M_s$. We extend $\log(\frac{\tr h_s}{\rank(E)})$ to the function $\widetilde{\log(\frac{\tr h_s}{\rank(E)})}$ on whole $M$ by setting $0$ outside $M_s$.
For any non-negative compactly supported function $\vp$, the Green's formula yields
\begin{equation}\begin{split}
\int_M\widetilde{\log(\frac{\tr h_s}{\rank(E)})}\Delta\vp\dvol_g
&=\int_{M_s}\log(\frac{\tr h_s}{\rank(E)})\Delta\vp\dvol_g
\\&=\int_{M_s}\vp\Delta\log(\frac{\tr h_s}{\rank(E)})\dvol_g
\\&+\int_{\pa M_s}\vp\frac{\pa\log(\frac{\tr h_s}{\rank(E)})}{\pa\vec{n}}\dvol_{\pa M_s}
\\&-\int_{\pa M_s}\log(\frac{\tr h_s}{\rank(E)})\frac{\pa\vp}{\pa\vec{n}}\dvol_{\pa M_s}
\\&\geq\int_{M_s}\vp\Delta\log(\frac{\tr h_s}{\rank(E)})\dvol_g.
\end{split}\end{equation}
Now by Lemma \ref{keylemma}, we see
\begin{equation}\begin{split}
\Delta\log(\frac{\tr h_s}{\rank(E)})
&=\frac{2(D_{H_s}^\ast\psi_{H_s}-D_K^\ast\psi_K,h_s)+|h_s^{-\frac{1}{2}}\delta_Kh_s|^2}{\tr h_s}-\frac{|\tr\delta_Kh_s|^2}{(\tr h_s)^2}
\\&\geq\frac{2(D_{H_s}^\ast\psi_{H_s}-D_K^\ast\psi_K,h_s)}{\tr h_s}
\\&=\frac{-2((D_K^\ast\psi_K)^{\perp},h_s)}{\tr h_s}
+\frac{2((\tr D_{H_s}^\ast\psi_{H_s}-\tr D_K^\ast\psi_K)\id_E,h_s)}{\rank(E)\tr h_s}
\\&\geq-2|(D_K^\ast\psi_K)^{\perp}|
+\frac{\Delta\log\det h_s}{\rank(E)}
\\&\geq-2C\phi,
\end{split}\end{equation}
where we have used the following simple inequality
\begin{equation}\begin{split}
\frac{|\tr\delta_Kh_s|^2}{\tr h_s}\leq|h_s^{-\frac{1}{2}}\delta_Kh_s|^2.
\end{split}\end{equation}
And therefore
\begin{equation}\begin{split}
\int_M\widetilde{\log(\frac{\tr h_s}{\rank(E)})}\Delta\vp\dvol_g
&\geq-2C\int_{M_s}\vp\phi\dvol_g
\geq-2C\int_{M}\vp\phi\dvol_g.
\end{split}\end{equation}
That is, $\widetilde{\Delta\log(\frac{\tr h_s}{\rank(E)})}\geq-2C\phi$ as a distribution. So by Assumption \ref{assump1} we get
\begin{equation}\begin{split}
\max\limits_{M_s}\log(\frac{\tr h_s}{\rank(E)})\leq C_1(1+\int_{M_s}\log(\frac{\tr h_s}{\rank(E)})\phi\dvol_g),
\end{split}\end{equation}
where $C_1$ depends on $C$.
Combing this with $\det h_s=1$ yields
\begin{equation}\begin{split}\label{sec31}
\max\limits_{M_s}|\log h_s|\leq C_2(1+\int_{M_s}|\log h_s|\phi\dvol_g),
\end{split}\end{equation}
for a constant $C_2$ depending only on $C_1$ and $\rank(E)$, where we have used that
\begin{equation}\begin{split}
\log(\frac{\tr h_s}{\rank(E)})\leq|\log h_s|\leq\rank(E)^{\frac{3}{2}}\log\tr h_s.
\end{split}\end{equation}
\par Our goal is to show the quantity
\begin{equation}\begin{split}\label{sec32}
\int_{M_s}|\log h_s|\phi\dvol_g
\end{split}\end{equation}
is uniformly bounded, under the assumption that $D$ is $K$-analytically stable.
\par For further consideration, we recall some notations. Given $\rho\in C^\infty(\mathbb{R},\mathbb{R}),
\Theta\in C^\infty(\mathbb{R}\times\mathbb{R},\mathbb{R})$, $\chi\in\Omega^{\bullet}(\End(E))$ and a self-adjoint bundle
endomorphism $\sigma$, we define $\rho[\sigma]$ and $\Theta[\sigma](\chi)$ as follows. For every $x\in M$, we set
\begin{equation}\begin{split}\label{definition}
\rho[\sigma]=\rho(\lambda_\alpha)e^\alpha\otimes e_\alpha, \Theta[\sigma](\chi)
=\Theta(\lambda_\alpha,\lambda_\beta)\chi_\alpha^\beta e^\alpha\otimes e_\beta,
\end{split}\end{equation}
where $\{e_\alpha\}_{\alpha=1}^{\rank(E)}$ is an orthogonal basis with respect to $K$, such that
\begin{equation}\begin{split}
\sigma(e_\alpha)=\lambda_\alpha e_\alpha, \chi=\chi_\alpha^\beta e^\alpha\otimes e_\beta.
\end{split}\end{equation}
\par Next we prove the following proposition.
\begin{proposition}\label{keyproposition}
Let $(E,D)$ be a vector bundle over a compact Riemannian manifold $(X,g)$ with empty boundary,
and $H,K$ be two fiber metrics with $s=\log(K^{-1}H)$ and $s|_{\pa X}=0$. Then we have
\begin{equation}\begin{split}\label{outlinekeyidentity}
\int_X(D_{H}^{\ast}\psi_{H}-D_K^{\ast}\psi_K,s)\dvol_g=-\frac{1}{2}\int_X(\Theta[s](Ds),Ds)\dvol_g,
\end{split}\end{equation}
in the sense of $(\ref{definition})$, where
\begin{equation}\begin{split}\Theta(x,y)=
 \begin{cases}
	\frac{e^{y-x}-1}{y-x},x\neq y,\\
    1,x=y.
 \end{cases}
 \end{split}\end{equation}
\end{proposition}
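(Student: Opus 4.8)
The plan is to join $K$ and $H$ by the path of fiber metrics $H(u)=Ke^{us}$, $u\in[0,1]$, for which $h(u):=K^{-1}H(u)=e^{us}$ and hence $h^{-1}(u)\frac{\pa h(u)}{\pa u}=s$ is independent of $u$. Setting $\Phi(u)=\int_X\tr\!\big(D_{H(u)}^\ast\psi_{H(u)}\cdot s\big)\dvol_g$, the left-hand side of $(\ref{outlinekeyidentity})$ equals $\Phi(1)-\Phi(0)=\int_0^1\Phi'(u)\,du$, since $s$ does not vary with $u$. A small but useful observation to record first is that $s$ is self-adjoint not only with respect to $K$ but also with respect to every $H(u)$, because $s$ commutes with $e^{us}$; consequently the trace pairing $\tr(A\cdot s)$ coincides with both $(A,s)_K$ and $(A,s)_{H(u)}$, which lets me pass freely to the $H(u)$-adjoint when integrating by parts.

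Next I would differentiate under the integral sign and insert the variation formula $(\ref{DHstarpsiHt})$ with $h^{-1}(u)\frac{\pa h(u)}{\pa u}=s$, producing three terms paired against $s$. The commutator term $\frac12[D_{H(u)}^\ast\psi_{H(u)},s]$ contributes nothing, as $\tr([A,s]s)=0$ by cyclicity of the trace. The term $-\frac12 D_{H(u)}^\ast D_{H(u)}s$ integrates by parts to $-\frac12\|D_{H(u)}s\|_{H(u)}^2$, the boundary contribution vanishing thanks to $s|_{\pa X}=0$. The double-commutator term is rewritten, using the trace identity $\tr([A,[B,C]]C)=\tr([B,C][C,A])$ together with the fact that $[\psi_{H(u)},s]$ is $H(u)$-anti-self-adjoint, as $-\frac12\|[\psi_{H(u)},s]\|_{H(u)}^2$. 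Since $D=D_{H(u)}+\psi_{H(u)}$ and the summands $D_{H(u)}s$ (self-adjoint part) and $[\psi_{H(u)},s]$ (anti-self-adjoint part) are pointwise $H(u)$-orthogonal, these combine into
\[
\Phi'(u)=-\frac{1}{2}\int_X|Ds|_{H(u)}^2\dvol_g.
\]

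It then remains to perform the $u$-integration and to recognise $\Theta$. Fixing $x\in X$ and choosing a $K$-orthonormal eigenbasis $\{e_\alpha\}$ with $s(e_\alpha)=\lambda_\alpha e_\alpha$, I write $Ds=(Ds)_\alpha^\beta\,e^\alpha\otimes e_\beta$ in the notation of $(\ref{definition})$. Because $H(u)(e_\alpha,e_\beta)=e^{u\lambda_\alpha}\delta_{\alpha\beta}$, the rank-one endomorphisms $e^\alpha\otimes e_\beta$ remain $H(u)$-orthogonal with $|e^\alpha\otimes e_\beta|_{H(u)}^2=e^{u(\lambda_\beta-\lambda_\alpha)}$, so that $|Ds|_{H(u)}^2=\sum_{\alpha,\beta}e^{u(\lambda_\beta-\lambda_\alpha)}|(Ds)_\alpha^\beta|_g^2$. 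Integrating in $u$ yields exactly $\int_0^1 e^{u(\lambda_\beta-\lambda_\alpha)}\,du=\Theta(\lambda_\alpha,\lambda_\beta)$, whence $\int_0^1|Ds|_{H(u)}^2\,du=(\Theta[s](Ds),Ds)$ pointwise, and $(\ref{outlinekeyidentity})$ follows upon integrating over $X$.

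The argument is essentially algebraic once the path $H(u)=Ke^{us}$ is fixed, and I expect the main obstacle to be purely bookkeeping: keeping careful track of which metric each adjoint and each norm refers to, and exploiting the joint $K$- and $H(u)$-self-adjointness of $s$ to move between the two. A secondary technical nuisance is that the eigenbasis $\{e_\alpha\}$ and eigenvalues $\lambda_\alpha$ need not be globally smooth where eigenvalues collide; this is harmless, since the integrand $(\Theta[s](Ds),Ds)$ is defined intrinsically through $(\ref{definition})$ and the identity is proved pointwise before the integration over $X$.
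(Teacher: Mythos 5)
Your proposal is correct, and it takes a genuinely different route from the paper's proof. The paper argues directly: it expands $(D_H^\ast\psi_H-D_K^\ast\psi_K,s)$ pointwise via the formula $(\ref{DHKstarpsiHK})$ relating the two operators, obtaining an exact term plus $-\tfrac12(h^{-1}\delta_Kh,\delta_Ks)$, and then identifies $(h^{-1}\delta_Kh,\delta_Ks)=(\Theta[s](Ds),Ds)$ by choosing, on an open dense subset of $X$ (citing Section 7.4 of \cite{LT1995}), a locally smooth $K$-orthogonal eigenframe $\{e_\alpha\}$ of $h$ and computing $Dh\circ h^{-1}=\Theta[s](Ds)$ in that frame; Stokes' theorem then kills the exact term. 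You instead deform $K$ to $H$ along the exponential path $H(u)=Ke^{us}$, differentiate $\Phi(u)=\int_X\tr\bigl(D_{H(u)}^\ast\psi_{H(u)}\,s\bigr)\dvol_g$ using the paper's variational formula $(\ref{DHstarpsiHt})$, and obtain $\Phi'(u)=-\tfrac12\int_X|Ds|^2_{H(u)}\dvol_g$ from the orthogonal splitting $Ds=D_{H(u)}s+[\psi_{H(u)},s]$ into $H(u)$-self-adjoint and anti-self-adjoint parts; the kernel $\Theta$ then appears as the elementary integral $\int_0^1 e^{u(\lambda_\beta-\lambda_\alpha)}\,du$. Both arguments are sound, and your intermediate computations (vanishing of the $\tr([A,s]s)$ term, the double-commutator identity giving $-\tfrac12|[\psi_H,s]|^2_H$, the weights $e^{u(\lambda_\beta-\lambda_\alpha)}$ for the rank-one endomorphisms) all check out. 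What your route buys: it needs only pointwise linear algebra at each $x\in X$ rather than a locally smooth eigenframe on a dense open set, it explains conceptually where the function $\Theta$ comes from, and it exhibits the identity as the integrated first variation of a Donaldson-type functional along the path $e^{us}$ (Simpson's original perspective). What the paper's route buys: it is shorter and entirely self-contained at the level of one pointwise computation plus Stokes, with no need to invoke the variational formula or justify differentiation under the integral sign.
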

\begin{proof}
Consider the local normal coordinate at the considered point and for $h=K^{-1}H$, noting that
$(h^{-1}\delta_{K,\frac{\pa}{\pa x^i}}h,s)=\tr(s\delta_{K,\frac{\pa}{\pa x^i}}s)$, through computing, we have
\begin{equation}\begin{split}\label{key01}
(D_H^\ast\psi_H-D_K^\ast\psi_K,s)
&=-\frac{1}{2}(D_K^\ast(h^{-1}\delta_Kh),s)-\frac{1}{2}([h^{-1}\delta_{K,\frac{\pa}{\pa x^i}}h,\psi_K(\frac{\pa}{\pa x^i})],s)
\\&=\frac{1}{2}(D_{K,\frac{\pa}{\pa x^i}}(h^{-1}\delta_{K,\frac{\pa}{\pa x^i}}h),s)+\frac{1}{2}(h^{-1}\delta_{K,\frac{\pa}{\pa x^i}}h,
[\psi_K(\frac{\pa}{\pa x^i}),s])
\\&=\frac{1}{2}\frac{\pa}{\pa x^i}(h^{-1}\delta_{K,\frac{\pa}{\pa x^i}}h,s)-\frac{1}{2}(h^{-1}\delta_{K,\frac{\pa}{\pa x^i}}h,
D_{K,\frac{\pa}{\pa x^i}}s)
\\&+\frac{1}{2}(h^{-1}\delta_{K,\frac{\pa}{\pa x^i}}h,[\psi_K(\frac{\pa}{\pa x^i}),s])
\\&=\frac{1}{2}\frac{\pa}{\pa x^i}\tr(s\delta_{K,\frac{\pa}{\pa x^i}}s)-\frac{1}{2}(h^{-1}\delta_{K,\frac{\pa}{\pa x^i}}h,
\delta_{K,\frac{\pa}{\pa x^i}}s)
\\&=\frac{1}{2}\frac{\pa}{\pa x^i}\tr(s\frac{\pa}{\pa x^i}s)-\frac{1}{2}(h^{-1}\delta_Kh,\delta_Ks).
\end{split}\end{equation}
\par On the other hand, according to Section 7.4 in \cite{LT1995}, there is an open dense subset $W\subset X$ and for every $x\in W$,
we may choose an orthogonal basis $\{e_\alpha\}_{\alpha=1}^{\rank(E)}$ for $E$ with respect to $K$, defined over a neighborhood of $x$, such that
\begin{equation}\begin{split}
h=\sum\limits_{\alpha=1}^{\rank(E)}e^{\lambda_\alpha}e_\alpha\otimes e^\alpha,s=\sum\limits_{\alpha=1}^{\rank(E)}\lambda_\alpha e_\alpha\otimes e^\alpha.
\end{split}\end{equation}
We set $De_\alpha=A_\alpha^\beta e_\beta$, then it follows
\begin{equation}\begin{split}
Dh\circ h^{-1}=\Theta[s](Ds)=\sum\limits_{\alpha=1}^{\rank(E)}d\lambda_\alpha e_\alpha\otimes e^\alpha+\sum\limits_{\alpha\neq\beta}(1-e^{\lambda_\alpha-\lambda_\beta})A_\beta^\alpha e_\alpha\otimes e^\beta,
\end{split}\end{equation}
and therefore
\begin{equation}\begin{split}\label{key02}
(h^{-1}\delta_Kh,\delta_Ks)&=(Ds,Dh\circ h^{-1})=(\Theta[s](Ds),Ds).
\end{split}\end{equation}
\par Finally, by $(\ref{key01})$ and $(\ref{key02})$, we have
\begin{equation}\begin{split}
\int_X(D_{H}^{\ast}\psi_{H}-D_K^{\ast}\psi_K,s)\dvol_g
&=\frac{1}{4}\int_Xd(\ast d|s|^2)-\frac{1}{2}\int_X(\Theta[s](Ds),Ds)\dvol_g
\\&=-\frac{1}{2}\int_X(\Theta[s](Ds),Ds)\dvol_g.
\end{split}\end{equation}
\end{proof}
\par We shall prove $(\ref{sec32})$ by contradictory argument and the method is a combination of Proposition \ref{keyproposition} and Simpson's trick.
Otherwise, we may assume that there exists a sequence $s\rightarrow\infty$, such that
\begin{equation}\begin{split}
l_s=\int_{M_s}|\log h_s|\phi\dvol_g\rightarrow+\infty.
\end{split}\end{equation}
Let's set $u_s=\log h_s/l_s$ with $\tr u_s=0$ and $\int_{M_s}|u_s|\phi\dvol_g=1$. By $(\ref{sec31})$ we see
\begin{equation}\begin{split}\label{sec33}
\sup\limits_{M_s}|u_s|\leq\frac{C_2}{l_s}(1+l_s)\leq C_3,
\end{split}\end{equation}
for a constant $C_3$ doesn't depend on $s$. Using Proposition \ref{keyproposition} and the fact that $H_s$ is a Poisson metric, one has
\begin{equation}\begin{split}
\int_{M_s}l_s(\Theta[l_su_s](Du_s),Du_s)\dvol_g
&=2\int_{M_s}(D_K^{\ast}\psi_K-D_{H_s}^{\ast}\psi_{H_s},u_s)\dvol_g
\\&=2\int_{M_s}((D_K^\ast\psi_K)^{\perp},u_s)\dvol_g.
\end{split}\end{equation}
Notice that
\begin{equation}
\begin{split}
l\Theta(lx,ly)\rightarrow
\begin{cases}
\frac{1}{x-y},x>y,\\+\infty,x\leq y,
\end{cases}
\end{split}\end{equation}
increases monotonically as $l\rightarrow+\infty$. It holds that
\begin{equation}\label{sec34}\begin{split}
\int_{M_s}(\rho[u_s](Du_s),Du_s)\dvol_g\leq2\int_{M_s}((D_K^\ast\psi_K)^{\perp},u_s)\dvol_g.
\end{split}\end{equation}
for any $\rho:\mathbb{R}\times\mathbb{R}\rightarrow\mathbb{R}$ with $\rho(x,y)<\frac{1}{x-y}$ whenever $x>y$ and $s$ large enough.
\par  Due to the zeroth order estimate of $u_s$, and $|(D_K^\ast\psi_K)^{\perp}|\leq C\phi$, by taking $\rho$ small enough, it follows that $Du_s$
is bounded in $L^2$-norm on any compact subset of $M$. We may assume that $u_s\rightarrow u_\infty$ weakly in $L_{1,\loc}^2$-topology with
$||u_\infty||_{L^\infty}\leq C_3$. Since $\phi\in L^1$, for any $\epsilon>0$ there exists a $s_\epsilon$ such that for any $s\geq s_\epsilon$, it holds
\begin{equation}\label{sec35}\begin{split}
\int_{M-M_s}\phi\dvol_g\leq\epsilon.
\end{split}\end{equation}
Hence for $s_\epsilon\leq s_1\leq s_2$, $(\ref{sec33})$ indicates
\begin{equation}\begin{split}
1-C_3\epsilon
&\leq\int_{M_{s_2}}|u_{s_2}|\phi\dvol_g-\int_{M_{s_2}-M_{s_1}}|u_{s_2}|\phi\dvol_g
=\int_{M_{s_1}}|u_{s_2}|\phi\dvol_g
\leq1.
\end{split}\end{equation}
Fixing $s_1$ and by the weak compactness of $\{u_s\}$ in $L_{1,\loc}^2$-topology, we get
\begin{equation}\begin{split}
1-C_3\epsilon\leq\int_{M_{s_1}}|u_\infty|\phi\dvol_g\leq1,
\end{split}\end{equation}
and then taking $s_1\rightarrow\infty$ and $\epsilon\rightarrow0$ yields $\int_M|u_\infty|\phi\dvol_g=1$, which implies $u_\infty\neq0$.
\par Now it follows from $(\ref{sec33}),(\ref{sec34}),(\ref{sec35})$ and $|(D_K^\ast\psi_K)^{\perp}|\leq C\phi$ that
\begin{equation}\begin{split}
\int_{M_{s_1}}(\rho[u_{s_2}](Du_{s_2}),Du_{s_2})\dvol_g
&\leq\int_{M_{s_2}}(\rho[u_{s_2}](Du_{s_2}),Du_{s_2})\dvol_g
\\&\leq2(\int_{M_{s_2}-M_{s_1}}+\int_{M_{s_1}})((D_K^\ast\psi_K)^{\perp},u_{s_2})\dvol_g
\\&\leq2CC_3\epsilon+2\int_{M_{s_1}}((D_K^\ast\psi_K)^{\perp},u_{s_2})\dvol_g.
\end{split}\end{equation}
Let's take $s_2\rightarrow\infty, s_1\rightarrow\infty$ and $\epsilon\rightarrow0$ successively, it yields
\begin{equation}\label{sec36}\begin{split}
\int_M(\rho[u_\infty](Du_\infty),Du_\infty)\dvol_g
&\leq2\int_M((D_K^\ast\psi_K)^{\perp},u_\infty)\dvol_g
\leq2CC_3\int_M\phi\dvol_g.
\end{split}\end{equation}
\par As long as $(\ref{sec36})$ is established, by using Simpson's discussion in \cite{Si1988}(p886-888) and Loftin's observation in \cite{Lo2009},
it's easy to conclude $(\ref{sec32})$ and therefore we obtain
\begin{proposition}\label{poissonc0}
If $(M,g)$ satisfies the Assumption \ref{assump1} and the flat bundle $(E,D)$ is $K$-analytically stable with $|(D_K^\ast\psi_K)^{\perp}|\leq C\phi$ for a constant $C$, it holds
\begin{equation}\begin{split}
\max\limits_{M_s}|\log h_s|\leq\tilde{C}_1,
\end{split}\end{equation}
for a constant $\tilde{C}_1$ that doesn't depend on $s$.
\end{proposition}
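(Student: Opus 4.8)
The analytic scheme preceding the statement has already reduced everything to the single claim (\ref{sec32}), namely that $l_s=\int_{M_s}|\log h_s|\phi\,\dvol_g$ stays bounded as $s\to\infty$; and the negation of this has been shown to produce a nonzero limit $u_\infty$ which is $K$-self-adjoint, trace-free, bounded with $\|u_\infty\|_{L^\infty}\le C_3$, has $Du_\infty\in L^2_{\loc}$, and satisfies the master inequality (\ref{sec36}) for every $\rho$ obeying $\rho(x,y)<(x-y)^{-1}$ when $x>y$. The plan is therefore to convert (\ref{sec36}) into a violation of $K$-analytic stability, which is exactly Simpson's construction in \cite{Si1988} transported to the flat-bundle setting by Loftin \cite{Lo2009}.

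First I would run the spectral construction on $u_\infty$. Let $\gamma_1<\cdots<\gamma_k$ be its distinct eigenvalues and let $\pi_\beta$ be the spectral projection onto the eigenspaces with eigenvalue $\le\gamma_\beta$, so $\pi_k=\id$ and
\begin{equation*}
u_\infty=\gamma_k\,\id-\sum_{\beta=1}^{k-1}(\gamma_{\beta+1}-\gamma_\beta)\pi_\beta.
\end{equation*}
For each threshold $\beta$ I would feed into (\ref{sec36}) smooth functions $\rho$ chosen to blow up across $\gamma_\beta$ while vanishing on pairs of eigenvalues on the same side of it; passing to the monotone limit forces $\pi_\beta\in L^2_1$ with $(\id-\pi_\beta)D\pi_\beta=0$ almost everywhere, i.e.\ each $\pi_\beta$ is a weakly $D$-holomorphic projection. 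By the Uhlenbeck-Yau regularity theorem (\cite{UY1986}, in Loftin's flat formulation) each $\pi_\beta$ then determines a proper nontrivial $D$-invariant sub-sheaf $V_\beta\subsetneq\mathcal{O}_M(E)$, a $D$-invariant sub-bundle outside a singular set, whose degree $\deg_g(V_\beta,D,K)$ is computed by the Chern-Weil formula (\ref{Chernweilbundle}).

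Next I would assemble the two sides of (\ref{sec36}) in terms of the $\pi_\beta$. Because $u_\infty$ is trace-free, its pairing with $(D_K^\ast\psi_K)^\perp$ does not see the $\gamma_k\,\id$ term, and expanding the remaining terms through (\ref{Chernweilbundle}) turns the right-hand side of (\ref{sec36}) into $2\sum_\beta(\gamma_{\beta+1}-\gamma_\beta)\big(\deg_g(V_\beta,D,K)+\tfrac12\int_M|D\pi_\beta|^2-\tfrac{\rank(V_\beta)}{\rank(E)}\deg_g(E,D,K)\big)$; meanwhile, by the very design of $\Theta$, the monotone limit of the left-hand side reproduces exactly $\sum_\beta(\gamma_{\beta+1}-\gamma_\beta)\int_M|D\pi_\beta|^2$. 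The $|D\pi_\beta|^2$ contributions cancel, and (\ref{sec36}) collapses to
\begin{equation*}
0\le\sum_{\beta=1}^{k-1}(\gamma_{\beta+1}-\gamma_\beta)\,\rank(V_\beta)\left(\frac{\deg_g(V_\beta,D,K)}{\rank(V_\beta)}-\frac{\deg_g(E,D,K)}{\rank(E)}\right).
\end{equation*}
Since $\gamma_{\beta+1}-\gamma_\beta>0$ and $u_\infty\ne0$ guarantees at least one genuine threshold, $K$-analytic stability makes every slope difference strictly negative, so the sum is strictly negative, a contradiction. Hence $l_s$ is bounded, (\ref{sec32}) holds, and combining with (\ref{sec31}) gives $\max_{M_s}|\log h_s|\le\tilde C_1$.

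I expect the middle paragraph to be the main obstacle. Proving that the measurable spectral projections $\pi_\beta$ genuinely lie in $L^2_1$, satisfy $(\id-\pi_\beta)D\pi_\beta=0$, and saturate to coherent sub-sheaves carrying finite, Chern-Weil-computable degrees is the delicate Uhlenbeck-Yau regularity step; moreover, on the noncompact $M$ one must check that these degree integrals converge and that all the limits (in $\rho$, in $s_1,s_2\to\infty$, and $\epsilon\to0$) are taken compatibly with the weight $\phi\in L^1$. Once regularity and this bookkeeping are secured, the stability inequality closes the argument at once.
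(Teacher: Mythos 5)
Your proposal is correct and takes essentially the same route as the paper: the paper's own proof is exactly the rescaling/contradiction scheme culminating in $(\ref{sec36})$, after which it invokes Simpson's discussion in \cite{Si1988} (pp.~886--888) and Loftin's observation in \cite{Lo2009} --- precisely the spectral-projection construction, Uhlenbeck--Yau regularity step, Chern--Weil bookkeeping via $(\ref{Chernweilbundle})$, and stability contradiction that you spell out --- to conclude $(\ref{sec32})$ and hence, with $(\ref{sec31})$, the uniform bound. The only caveat is that your decomposition of $u_\infty$ tacitly assumes its eigenvalues are constant on $M$ (Simpson's first step, which also relies on the master inequality), but this is part of the cited argument rather than a divergence from it.
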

Using Lemma \ref{keylemma} again shows
\begin{equation}\begin{split}
|h_s^{-\frac{1}{2}}\delta_Kh_s||_{L^2(M_s)}^2
&=\int_{M_s}\left(\Delta \tr h_s-2(D_{H_s}^\ast\psi_{H_s}-D_K^\ast\psi_K,h_s)\right)\dvol_g
\\&=\int_{M_s}\left(\Delta \tr h_s+2((D_K^\ast\psi_K)^{\perp},h_s)\right)\dvol_g
\\&=-\int_{\pa M_s}\frac{\pa\tr h_s}{\pa\vec{n}}\dvol_{\pa M_s}+2\int_{M_s}((D_K^\ast\psi_K)^{\perp},h_s)\dvol_g
\\&\leq2\int_{M_s}((D_K^\ast\psi_K)^{\perp},h_s)\dvol_g
\\&\leq CC_4\int_M\phi\dvol_g,
\end{split}\end{equation}
where $C_4$ depends only on $\tilde{C}_1$ and $\rank(E)$. So we obtain
\begin{proposition}\label{poissonl2}
If $(M,g)$ satisfies the Assumption \ref{assump1} and the flat bundle $(E,D)$ is $K$-analytically stable with $|(D_K^\ast\psi_K)^{\perp}|\leq C\phi$ for a constant $C$, we have
\begin{equation}\begin{split}||Dh_s||_{L^2(M_s)}\leq\tilde{C_2}||\phi||_{L^1}^{\frac{1}{2}},
\end{split}\end{equation}
for a constant $\tilde{C}_2$ that depends only on $C,\tilde{C}_1$ and $\rank(E)$.
\end{proposition}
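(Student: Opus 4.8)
The plan is to read off the estimate from the pointwise identity $(\ref{keylemma1})$ of Lemma \ref{keylemma}, integrated over $M_s$. Rewriting $(\ref{keylemma1})$ with $H=H_s$ in the form
\begin{equation*}
|h_s^{-\frac{1}{2}}\delta_K h_s|^2=\Delta\tr h_s-2(D_{H_s}^\ast\psi_{H_s}-D_K^\ast\psi_K,h_s),
\end{equation*}
I would integrate over $M_s$ and control the two resulting terms using only the boundary data and the zeroth order bound already supplied by Proposition \ref{poissonc0}. The target quantity $\|Dh_s\|_{L^2(M_s)}$ is not literally what appears here, so a final pointwise comparison between $Dh_s$ and $\delta_K h_s$ will be required; I postpone this to the end.

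The inner-product term simplifies sharply. Since $\det h_s=1$, identity $(\ref{DHKstarpsiHK})$ gives $\tr D_{H_s}^\ast\psi_{H_s}=\tr D_K^\ast\psi_K$, and because $H_s$ solves the Poisson equation $D_{H_s}^\ast\psi_{H_s}=c_s\id_E$ this forces $c_s=\frac{1}{\rank(E)}\tr D_K^\ast\psi_K$; hence $D_{H_s}^\ast\psi_{H_s}-D_K^\ast\psi_K=-(D_K^\ast\psi_K)^\perp$ and the term collapses to $((D_K^\ast\psi_K)^\perp,h_s)$. For the Laplacian term I would integrate by parts, $\int_{M_s}\Delta\tr h_s\dvol_g=-\int_{\pa M_s}\frac{\pa\tr h_s}{\pa\vec{n}}\dvol_{\pa M_s}$, and observe that this boundary integral is nonpositive: the constraint $\det h_s=1$ together with the arithmetic-geometric mean inequality yields $\tr h_s\geq\rank(E)$ throughout $M_s$, while $h_s=\id_E$ on $\pa M_s$ makes $\tr h_s$ attain its minimum exactly on the boundary, so the inward normal derivative is nonnegative. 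Combining these with $|(D_K^\ast\psi_K)^\perp|\leq C\phi$ and the uniform bound $|h_s|\leq C_4$ from Proposition \ref{poissonc0} gives $\|h_s^{-\frac{1}{2}}\delta_K h_s\|_{L^2(M_s)}^2\leq CC_4\|\phi\|_{L^1}$, with $C_4$ depending only on $\tilde{C}_1$ and $\rank(E)$.

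The remaining step, and the one I would treat most carefully, is the passage from $h_s^{-\frac{1}{2}}\delta_K h_s$ to $Dh_s$. Writing $D=D_K+\psi_K$ and $\delta_K=D_K-\psi_K$ on $\End(E)$, we have $Dh_s=D_Kh_s+[\psi_K,h_s]$ and $\delta_Kh_s=D_Kh_s-[\psi_K,h_s]$. Since $h_s$ and $\psi_K$ are $K$-self-adjoint, $D_Kh_s$ is $K$-self-adjoint while $[\psi_K,h_s]$ is $K$-anti-self-adjoint, and these are pointwise $K$-orthogonal, so the cross terms cancel and $|Dh_s|=|\delta_Kh_s|$ identically. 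The $C^0$ bound $|\log h_s|\leq\tilde{C}_1$ then gives $|\delta_Kh_s|\leq e^{\tilde{C}_1/2}|h_s^{-\frac{1}{2}}\delta_Kh_s|$, and combined with the previous paragraph this yields $\|Dh_s\|_{L^2(M_s)}\leq\tilde{C}_2\|\phi\|_{L^1}^{1/2}$ for a constant $\tilde{C}_2$ depending only on $C,\tilde{C}_1$ and $\rank(E)$. I expect the main subtlety to lie not in any single estimate but in verifying that every constant is genuinely independent of $s$: the favourable sign of the boundary term and the $s$-uniform zeroth order bound of Proposition \ref{poissonc0} are exactly what permit this, while the orthogonality identity $|Dh_s|=|\delta_Kh_s|$ is what keeps the final comparison free of any uncontrolled $\psi_K$ contribution.
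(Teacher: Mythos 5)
Your proposal is correct and follows essentially the same route as the paper: integrate the identity $(\ref{keylemma1})$ over $M_s$, reduce the inner-product term to $((D_K^\ast\psi_K)^\perp,h_s)$ using $\det h_s=1$ and the Poisson equation, discard the boundary term by the sign argument $\tr h_s\geq\rank(E)$ with equality on $\pa M_s$, and invoke the $C^0$ bound of Proposition \ref{poissonc0}. Your final step --- the pointwise identity $|Dh_s|=|\delta_Kh_s|$ via $K$-self-adjoint/anti-self-adjoint orthogonality, followed by $|\delta_Kh_s|\leq e^{\tilde{C}_1/2}|h_s^{-\frac{1}{2}}\delta_Kh_s|$ --- is left implicit in the paper, and you supply it correctly.
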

\begin{proposition}\label{poissonexistence}
If $(M,g)$ satisfies the Assumption \ref{assump1} and the flat bundle $(E,D)$ is $K$-analytically stable with $|(D_K^\ast\psi_K)^{\perp}|\leq C\phi$ for a constant $C$, then there exists a Poisson metric $H$ such that $\det h=1$, $|h|\in L^\infty$ and $|Dh|\in L^2$ for $h=K^{-1}H$.
\end{proposition}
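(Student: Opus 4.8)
The plan is to obtain $H$ as a $C^\infty_{\loc}$ limit of the sequence of solutions $H_s$ to the Dirichlet problems on the exhausting domains $M_s$ furnished by Proposition \ref{Dirichlet2}, and then to read off the three quantitative properties directly from the uniform estimates of Propositions \ref{poissonc0} and \ref{poissonl2}. First I would pin down the Lagrange multiplier: since each $H_s$ solves $D_{H_s}^\ast\psi_{H_s}=c_s\,\id_E$ with $\det h_s=1$, taking the trace in $(\ref{DHKstarpsiHK})$ gives $\tr D_{H_s}^\ast\psi_{H_s}-\tr D_K^\ast\psi_K=\frac{1}{2}\Delta\log\det h_s=0$, so in fact $c_s=\frac{1}{\rank(E)}\tr D_K^\ast\psi_K$ on $M_s$, a single function independent of $s$. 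Thus all the $H_s$ solve the same equation on their respective domains, and the target equation for the limit is the Poisson equation $D_H^\ast\psi_H=\frac{1}{\rank(E)}(\tr D_K^\ast\psi_K)\id_E$.

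Next I would set up the convergence. By Proposition \ref{poissonc0} we have $|\log h_s|\le\tilde C_1$ uniformly in $s$, so $h_s$ and $h_s^{-1}$ are uniformly bounded and $H_s$ is uniformly equivalent to $K$; in particular the leading operator in $(\ref{DHKstarpsiHK})$ stays uniformly elliptic. Fix a relatively compact $U\Subset M$; for $s$ large enough $U\Subset M_s$, and on $U$ the Poisson equation reads as a second-order quasilinear elliptic system for $h_s$ whose coefficients are controlled by the $C^0$-bound on $h_s$ together with the fixed data $K$, $D$, $\phi$. Interior $L^p$-estimates then bound $h_s$ in $W^{2,p}(U')$ for every $U'\Subset U$, Sobolev embedding gives a uniform $C^{1,\alpha}$-bound, and a standard Schauder bootstrap upgrades this to uniform $C^{k}_{\loc}$-bounds for all $k$.

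Then I would run a diagonal argument over an exhaustion of $M$ by such sets $U$: Arzel\`a--Ascoli extracts a subsequence, still denoted $H_s$, converging in $C^\infty_{\loc}$ to a smooth positive-definite $H=Kh_\infty$. Since $\det h_s\equiv 1$ and $h_s$ is uniformly bounded below, the limit satisfies $\det h_\infty=1$ and $|h_\infty|\le e^{\tilde C_1}$, giving $|h|\in L^\infty$; passing to the limit in the locally smoothly convergent equation produces $D_H^\ast\psi_H=\frac{1}{\rank(E)}(\tr D_K^\ast\psi_K)\id_E$, so $H$ is a Poisson metric. For the gradient, the uniform bound $\|Dh_s\|_{L^2(M_s)}\le\tilde C_2\|\phi\|_{L^1}^{1/2}$ of Proposition \ref{poissonl2}, combined with lower semicontinuity of the $L^2$-norm under the weak $L^2_{1,\loc}$-convergence, yields $|Dh|\in L^2$.

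The main obstacle I anticipate is not the compactness itself---once the uniform $C^0$-bound of Proposition \ref{poissonc0} is in hand, uniform ellipticity and the bootstrap are routine---but rather making sure the \emph{global} $L^2$-gradient bound survives the limit, since the estimate of Proposition \ref{poissonl2} lives on the truncated domains $M_s$. To handle this I would exhaust $M$ from inside: on each fixed $U\Subset M$ I would bound $\|Dh_\infty\|_{L^2(U)}\le\liminf_s\|Dh_s\|_{L^2(U)}\le\tilde C_2\|\phi\|_{L^1}^{1/2}$ using the local strong convergence, and only then let $U\uparrow M$ and invoke monotone convergence to conclude $\|Dh_\infty\|_{L^2(M)}\le\tilde C_2\|\phi\|_{L^1}^{1/2}$.
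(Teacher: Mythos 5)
Your overall skeleton---solving the Dirichlet problems of Proposition \ref{Dirichlet2} on the exhaustion $M_s$, noting that $\det h_s=1$ forces $c_s=\frac{1}{\rank(E)}\tr D_K^\ast\psi_K$ independently of $s$, extracting a $C^\infty_{\loc}$ limit by a diagonal argument, and recovering $|Dh|\in L^2$ by lower semicontinuity from Proposition \ref{poissonl2}---is exactly the paper's, and those parts are correct. The genuine gap is in the step you dismiss as routine: the claim that the uniform $C^0$ bound of Proposition \ref{poissonc0} plus uniform ellipticity yields interior $W^{2,p}$ bounds by ``interior $L^p$-estimates''. Written out via $(\ref{DHKstarpsiHK})$, the equation satisfied by $h_s$ is
\begin{equation*}
D_K^\ast(h_s^{-1}\delta_Kh_s)=2(D_K^\ast\psi_K)^\perp-g^{ij}[h^{-1}_s\delta_{K,\frac{\pa}{\pa x^i}}h_s,\psi_K(\frac{\pa}{\pa x^j})],
\end{equation*}
and expanding $D_K^\ast(h_s^{-1}\delta_Kh_s)$ produces a term quadratic in first derivatives, schematically $\Delta h_s=h_s^{-1}\nabla h_s\,h_s^{-1}\nabla h_s+(\text{lower order})$. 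With only $\|h_s\|_{C^0}$ and $\|Dh_s\|_{L^2}$ under control, this right-hand side is merely in $L^1_{\loc}$, which is not enough to launch either Calder\'on--Zygmund or Schauder theory; this is the critical, harmonic-map-type nonlinearity for which a $C^0$ bound alone never yields $C^1$ bounds through linear elliptic estimates. So the bootstrap you describe cannot start, and your anticipated ``main obstacle'' (globalizing the $L^2$ gradient bound) is in fact the easy part.

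This missing step is precisely where the paper's proof has its real analytic content. Using the flatness of $D$ together with the Poisson equation and your own multiplier observation $\tr D_{H_s}^\ast\psi_{H_s}=\tr D_K^\ast\psi_K$ (so that $D_{H_s}D_{H_s}^\ast\psi_{H_s}=\frac{1}{\rank(E)}d\tr D_K^\ast\psi_K\otimes\id_E$ is a fixed smooth term), the paper derives the Bochner-type inequality $(\ref{sec37})$,
\begin{equation*}
\Delta|\psi_{H_s}|^2_{H_s}\geq-C_6|\psi_{H_s}|_{H_s}^2-C_7,
\end{equation*}
valid locally with $C_6$ controlled by a local lower Ricci bound. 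The comparison estimates $(\ref{sec38})$ and $(\ref{sec39})$ convert the uniform $L^2$ bound of Proposition \ref{poissonl2} into uniform local $L^1$ control of $|\psi_{H_s}|^2_{H_s}$, and then the local maximum principle (Theorem 9.20 in \cite{GT2001}) upgrades this to a uniform local $L^\infty$ bound on $|\psi_{H_s}|$, hence on $|Dh_s|$. Only after this uniform $C^1$ estimate does the displayed equation have bounded right-hand side, and only then do the $L^p$ and Schauder bootstraps proceed as you describe. To repair your proposal you must insert this gradient estimate (or an equivalent one, e.g.\ an argument in the style of Simpson's Lemma 6.4 in \cite{Si1988}) between the $C^0$ bound and the bootstrap; the remainder of your argument then goes through and coincides with the paper's proof.
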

\begin{proof}
Using the flatness of $D$, we deduce
\begin{equation}\label{sec37}\begin{split}
\Delta|\psi_{H_s}|^2_{H_s}&\geq-2g^{ij}(D_{H_s}^2(\cdot,\frac{\pa}{\pa x^i})(\psi_{H_s}(\frac{\pa}{\pa x^j})),\psi_{H_s})_{H_s}
\\&+2g^{ij}(\psi_{H_s}(R_{\nabla}(\cdot,\frac{\pa}{\pa x^i})(\frac{\pa}{\pa x^j})),\psi_{H_s})_{H_s}
-2(D_{H_s}D_{H_s}^\ast\psi_{H_s},\psi_{H_s})_{H_s}
\\&=|[\psi_{H_s},\psi_{H_s}]|_{H_s}^2+2(\psi_{H_s}\circ\Ric,\psi_{H_s})-2(D_{H_s}D_{H_s}^\ast\psi_{H_s},\psi_{H_s})_{H_s}
\\&\geq-C_5|\psi_{H_s}|_{H_s}^2-\frac{2}{\rank(E)}(d\tr D_{H_s}^\ast\psi_{H_s}\otimes\id_E,\psi_{H_s})_{H_s}
\\&=-C_5|\psi_{H_s}|_{H_s}^2-\frac{2}{\rank(E)}(d\tr D_K^\ast\psi_K\otimes\id_E,\psi_{H_s})_{H_s}
\\&\geq-C_6|\psi_{H_s}|_{H_s}^2-C_7,
\end{split}\end{equation}
where $C_6$ depends only on the lower bounded of the Ricci curvature at the considered point, $C_7$ depends only on $|d\tr D_K^\ast\psi_K|$
and $\rank(E)$.
\par On the other hand, $(\ref{psiHpsiK})$ implies there exists two constants $C_8,C_9$ that depend only on $\tilde{C}_1$
and $\rank(E)$, such that
\begin{equation}\label{sec38}\begin{split}
|\psi_{H_s}|_{H_s}^2\leq C_8(|\psi_K|^2+|Dh_s|^2),
\end{split}\end{equation}
\begin{equation}\label{sec39}\begin{split}
|Dh_s|^2\leq C_9(|\psi_K|^2+|\psi_{H_s}|^2_{H_s}).
\end{split}\end{equation}
So combing $(\ref{sec38})$, $(\ref{sec39})$ with Proposition \ref{poissonl2}, $(\ref{sec37})$ and the Theorem 9.20 in \cite{GT2001},
we get the uniform local boundedness of $|Dh_s|$. Now on each $M_s$, by the Poisson metric equation, we have
\begin{equation}\begin{split}
D_K^\ast(h_s^{-1}\delta_Kh_s)=2(D_K^\ast\psi_K)^\perp-g^{ij}[h^{-1}_s\delta_{K,\frac{\pa}{\pa x^i}}h_s,\psi_K(\frac{\pa}{\pa x^j})],
\end{split}\end{equation}
the standard bootstrapping procedure implies the uniform local higher order estimates of $\{h_s\}$ and by diagonal subsequence argument,
$H_s$ converge to a metric $H_\infty$ on the whole $M$ in $C_{\loc}^\infty$-topology. It obvious that $H_\infty$ is a Poisson metric on
$(E,D)$ such that $\det h=1$, $|h|\in L^\infty$ and $|Dh|\in L^2$.
\end{proof}
\begin{proof}[\textup{\textbf{Proof of Theorem \ref{thm1}}}]
The existence is given by Proposition \ref{poissonexistence} and the uniqueness will be proved in Proposition \ref{uniqueness1}. Conversely, we assume $D$ is irreducible and $H$ be a Poisson metric which shares the same properties with $H_\infty$, we shall demonstrate that it is also analytically stable with respect to $K$. For any $D$-invariant sub-bundle $S\subset E$, $K$ and $H$ restrict to the metrics $K_S$ and $H_S$ on $S$.
Taking the orthogonal complement of $S$ in $E$ with respect to $K$, we have the orthogonal decomposition $E=S\oplus S^\perp$ and the
projection $\pi_K$ onto $S$.
\par If $|D\pi_K|\not\in L^2$, by $(\ref{Chernweilbundle})$ and $|D_K^\ast\psi_K|\in L^1$, we know
\begin{equation}\begin{split}
\deg_g(S,D,K)=-\infty.
\end{split}\end{equation}
\par If $|D\pi_K|\in L^2$, we know $\deg_g(S,D,K)$ is finite. Moreover, we have
\begin{equation}\begin{split}
h_S=\pi_K\circ h\circ\pi_K,
\end{split}\end{equation}
\begin{equation}\begin{split}
D_Sh_S=\pi_K\circ Dh\circ\pi_K+D\pi_K\circ(\id_E-\pi_K)\circ h\circ\pi_K,
\end{split}\end{equation}
where $h_S=K_S^{-1}H_S$ and $D_S$ is the induced connection on $S$. It then follows
\begin{equation}\begin{split}
\int_M|\tr(h_S^{-1}\delta_{D_S,K_S}h_S)|^2\dvol_g
&\leq\int_M|h_S^{-1}|^2|D_Sh_S|^2\dvol_g<\infty,
\end{split}\end{equation}
where $\delta_{D_S,K_S}=D_{S,K}-\psi_{D_S,K_S}$. So Simpson's Lemma 5.2 in \cite{Si1988}(or Yau's Lemma in \cite{Ya1976}) implies
\begin{equation}\begin{split}
\lim\limits_{j\rightarrow\infty}\int_{M_j}\frac{1}{2}d(\ast d\log\det h_S)
=\lim\limits_{j\rightarrow\infty}\int_{M_j}\frac{1}{2}d(\ast\tr(h_S^{-1}\delta_{D_S,K_S}h_S))=0,
\end{split}\end{equation}
where each $M_j$ is an exhaustion subset. It then follows from the convergence theorem that
\begin{equation}\begin{split}
\lim\limits_{j\rightarrow\infty}\int_{M_j}\tr D_{H_S}^\ast\psi_{H_S}\dvol_g
&=\lim\limits_{j\rightarrow\infty}\int_{M_j}\tr D_{K_S}^\ast\psi_{K_S}\dvol_g
\\&+\lim\limits_{j\rightarrow\infty}\int_{M_j}\frac{1}{2}\Delta\log\det h_S\dvol_g
\\&=-\deg_g(S,D,K),
\end{split}\end{equation}
and hence
\begin{equation}\begin{split}
\lim\limits_{j\rightarrow\infty}\int_{M_j}\tr D_{H_S}^\ast\psi_{H_S}\dvol_g
&=\int_{M}\tr D_{H_S}^\ast\psi_{H_S}\dvol_g
=-\deg_g(S,D,H).
\end{split}\end{equation}
\par In summary, we conclude either
\begin{equation}\begin{split}\label{sec310}
\deg_g(S,D,K)=-\infty
\end{split}\end{equation}
or
\begin{equation}\begin{split}\label{sec311}
\deg_g(S,D,K)=\deg_g(S,D,H).
\end{split}\end{equation}
On the other hand, it follows from Proposition \ref{polystable} that $E$ is $H$-analytically stable and hence the proof is completed.
\end{proof}
\section{Some applications and consequences}
\subsection{Uniqueness of Poisson metrics}
Firstly, we get the following uniqueness result, which implies the limiting Poisson metric in Theorem \ref{thm1} is unique.
\begin{proposition}\label{uniqueness1}
Let $(E,D)$ be a $K$-analytically stable bundle over a Riemannian manifold $(M,g)$ satisfying the Assumptions \ref{assump1} and either Assumption
\ref{assump2} or complete with finite volume. Suppose $\tr D_K^\ast\psi_K\in L^1$ and $H$ be a Poisson metric such that $\det h=1,|h|\in L^\infty$ and $|Dh|\in L^2$, where $h=K^{-1}H$.
If $\tilde{H}$ is another Poisson metric which is mutually bounded with $H$ and $\det H=\det\tilde{H}$, we have $H=\tilde{H}$.
\end{proposition}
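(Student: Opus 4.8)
The plan is to run the uniqueness argument for harmonic metrics (Corollary \ref{laplacedistance}) with the Poisson equation in place of the harmonic one, and then to feed the resulting parallel endomorphism into the $K$-analytic stability hypothesis. Write $q=H^{-1}\tilde{H}$, a positive $H$-self-adjoint endomorphism with $\det q=1$ since $\det H=\det\tilde{H}$. First I would record that both metrics are Poisson, $D_H^\ast\psi_H=c_H\id_E$ and $D_{\tilde{H}}^\ast\psi_{\tilde{H}}=c_{\tilde{H}}\id_E$; taking the trace in $(\ref{DHKstarpsiHK})$ for the pair $(H,\tilde{H})$ and using $\log\det q=0$ gives $c_H=c_{\tilde{H}}$ pointwise, so $(D_{\tilde{H}}^\ast\psi_{\tilde{H}}-D_H^\ast\psi_H,q)=0$. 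Applying Lemma \ref{keylemma} to $(H,\tilde{H})$ (i.e. replacing $K,H,h$ there by $H,\tilde{H},q$) and adding $(\ref{keylemma1})$ and $(\ref{keylemma2})$ then yields the Poisson analogue of Corollary \ref{laplacedistance},
\[
\Delta\sigma(H,\tilde{H})=|q^{-\frac{1}{2}}\delta_Hq|^2+|q^{\frac{1}{2}}\delta_{\tilde{H}}q^{-1}|_{\tilde{H}}^2\geq0.
\]
Since $H$ and $\tilde{H}$ are mutually bounded, $\sigma(H,\tilde{H})$ is a nonnegative bounded function, so the second clause of Assumption \ref{assump1} forces $\Delta\sigma(H,\tilde{H})=0$ and hence $\delta_Hq=0$.

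Next I would upgrade $\delta_Hq=0$ to full $D$-parallelism. By $(\ref{DHDK})$ and $(\ref{psiHpsiK})$ applied to $(H,\tilde{H})$, the vanishing of $\delta_Hq$ gives $D_{\tilde{H}}=D_H$ and $\psi_{\tilde{H}}=\psi_H$. The connection $D_H$ now preserves both $H$ and $\tilde{H}$, hence preserves $q$, so $D_Hq=0$; since $\delta_Hq=D_Hq-[\psi_H,q]$ this also forces $[\psi_H,q]=0$, and therefore $Dq=D_Hq+[\psi_H,q]=0$. Thus $q$ is a $D$-parallel, self-adjoint, positive endomorphism with $\det q=1$, so its eigenvalues are globally constant (flatness of $D$).

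Suppose for contradiction $q\neq\id_E$. Since $\det q=1$ some eigenvalue $\mu\neq1$ occurs, and the corresponding eigenbundle $S$ is a proper $D$-invariant sub-bundle whose $H$-orthogonal projection satisfies $D\pi_H=0$. The decomposition $E=S\oplus S^{\perp_H}$ being $H$-orthogonal and $D$-invariant, $H|_S$ is again Poisson with the same constant $c_H$, whence $\deg_g(S,D,H)=\tfrac{\rank S}{\rank E}\deg_g(E,D,H)$; moreover $\det h=1$ makes $\tfrac12\Delta\log\det h=0$, giving $\deg_g(E,D,K)=\deg_g(E,D,H)$. To transport this to the $K$-side, note that $\det h=1$ and $|h|\in L^\infty$ make $H$ mutually bounded with $K$, so together with $|Dh|\in L^2$ and $D\pi_H=0$ the $K$-orthogonal projection $\pi_K$ onto $S$ has $|D\pi_K|\in L^2$ (via $(\ref{Chernweilbundle})$); hence $\deg_g(S,D,K)$ is finite and the dichotomy in the proof of Theorem \ref{thm1} puts us in the case $\deg_g(S,D,K)=\deg_g(S,D,H)$. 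Combining these identities gives $\tfrac{\deg_g(S,D,K)}{\rank S}=\tfrac{\deg_g(E,D,K)}{\rank E}$, contradicting the strict inequality of $K$-analytic stability. Hence $q=c\,\id_E$, and $\det q=1$ with $c>0$ forces $c=1$, i.e. $H=\tilde{H}$.

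The hard part is the final step: the invariant decomposition produced by $q$ is orthogonal for $H$, while stability is phrased for $K$, so one must (i) rule out the degenerate alternative $\deg_g(S,D,K)=-\infty$ by checking $|D\pi_K|\in L^2$, which is precisely where $|Dh|\in L^2$ and the mutual boundedness of $H$ and $K$ enter, and (ii) invoke the degree-comparison dichotomy of Theorem \ref{thm1} — itself resting on Assumption \ref{assump2} or finite volume through a Stokes/Gaffney argument and Simpson's Lemma — to convert the $H$-slope identity into a $K$-slope identity. The remaining computations (the commutator identities and the integration by parts behind $\Delta\sigma\geq0$) are routine given Lemma \ref{keylemma} and $(\ref{DHDK})$–$(\ref{psiHpsiK})$.
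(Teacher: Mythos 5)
Your proposal is correct and takes essentially the same route as the paper's proof: Assumption \ref{assump1} forces the comparison endomorphism $q$ to be parallel (the paper computes $\Delta\tr\tilde h\geq 0$ from Lemma \ref{keylemma} rather than your symmetrized $\Delta\sigma(H,\tilde H)\geq 0$, but this is the same mechanism), its eigendecomposition yields $D$-invariant subbundles, and the degree-comparison dichotomy from the proof of Theorem \ref{thm1} transports the slope identity to the $K$-side to contradict $K$-analytic stability. The only organizational difference is that the paper first deduces that $D$ is $H$-analytically stable and then runs a slope-sum over all eigenbundles, whereas you contradict $K$-stability directly on a single eigenbundle with eigenvalue $\mu\neq 1$; both versions rest on the same implicit step (that $D\pi_H=0$, $|Dh|\in L^2$ and mutual boundedness give $|D\pi_K|\in L^2$), which neither you nor the paper spells out.
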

\begin{proof}
Let $\tilde{h}=H^{-1}\tilde{H}$ and then we have
\begin{equation}\begin{split}
\Delta\tr\tilde{h}
&=2(D_{\tilde{H}}^\ast\psi_{\tilde{H}}-D_{H}^\ast\psi_{H},\tilde{h})_H+|\tilde{h}^{-\frac{1}{2}}\delta_{H}\tilde{h}|^2_{H}
\\&=\frac{(\Delta\log\det\tilde{h}\id_E,\tilde{h})_H}{\rank(E)}+|\tilde{h}^{-\frac{1}{2}}\delta_{H}\tilde{h}|^2_{H}
\\&=|\tilde{h}^{-\frac{1}{2}}\delta_{H}\tilde{h}|^2_{H}.
\end{split}\end{equation}
So Assumption \ref{assump1} implies $\tilde{h}$ is $D$-parallel and set $E=\mathop{\bigoplus}\limits_{j=1}^mE_j$ be the
eigendecomposition of $\tilde{h}$, then this decomposition is orthogonal with respect $H$ and $\tilde{H}$.
Moreover, we have $\tilde{H}|_{E_j}=c_jH|_{E_j}$ for some constants $c_j$ and
\begin{equation}\begin{split}
-\int_M\tr D_{H}^\ast\psi_H\dvol_g=-\sum\limits_{j=1}^m\int_M\tr D_{H|_{E_j}}^\ast\psi_{H|_{E_j}}\dvol_g.
\end{split}\end{equation}
\par Then running the same argument in the proof of Theorem \ref{thm1} shows that $D$ being $H$-analytically stable. As a consequence,
\begin{equation}\begin{split}
\frac{\deg(E,D,H)}{\rank(E)}
&=-\sum\limits_{j=1}^m\frac{\int_M\tr D_{H|_{E_j}}^\ast\psi_{H|_{E_j}}\dvol_g}{\rank(E_j)}\frac{\rank(E_j)}{\rank(E)}
\\&\leq\frac{\deg(E,D,H)}{\rank(E)},
\end{split}\end{equation}
the strict inequality occurs if and only if $m>1$ and therefore we complete the proof.
\end{proof}
If $D$ being flat, the following is a flat bundle analogy of Mochizuki's uniqueness result on Hermitian-Einstein metrics in \cite{Mo2020}.
\begin{proposition}\label{uniqueness2}
Let $(E,D)$ be a $K$-analytically stable flat bundle over a Riemannian manifold $(M,g)$ satisfying Assumption \ref{assump1} and there is a
positive exhaustion function $\rho$ such that $|d\log\rho|\in L^1$. Assume that $|\psi_K|\in L^2$ and $H_1$ be a Poisson metric such that
$|h_1|,|h_1^{-1}|\in L^\infty$ and $|Dh_1|\in L^2$, where $h_1=K^{-1}H_1$. If $H_2$ is another Poisson metric which is mutually bounded with
$H_1$ and $\det H_1=\det H_2$, then $H_1=H_2$.
\end{proposition}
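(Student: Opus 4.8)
The plan is to follow the scheme of Proposition \ref{uniqueness1}, replacing the integrability input there ($\tr D_K^\ast\psi_K\in L^1$ together with Assumption \ref{assump2} or finite volume) by the present hypotheses: the exhaustion function with $|d\log\rho|\in L^1$ and the $L^2$-bounds $|\psi_K|,|Dh_1|\in L^2$. First I would set $\tilde h=H_1^{-1}H_2$, which is positive and $H_1$-self-adjoint, is bounded together with its inverse since $H_1,H_2$ are mutually bounded, and satisfies $\det\tilde h=1$ because $\det H_1=\det H_2$. Applying Lemma \ref{keylemma} to the pair $(H_1,H_2)$ and using that both are Poisson metrics, so that $D_{H_i}^\ast\psi_{H_i}=c_i\id_E$ with $2(c_2-c_1)=\frac{1}{\rank(E)}\Delta\log\det\tilde h=0$, gives $\Delta\tr\tilde h=|\tilde h^{-\frac12}\delta_{H_1}\tilde h|^2_{H_1}\geq0$. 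Since $\tr\tilde h-\rank(E)$ is a nonnegative bounded function with nonnegative distributional Laplacian, the second part of Assumption \ref{assump1} forces $\Delta\tr\tilde h\equiv0$, hence $\delta_{H_1}\tilde h=0$. Taking $H_1$-adjoints and using that $\tilde h$ and $\psi_{H_1}$ are self-adjoint splits this into $D_{H_1}\tilde h=0$ and $[\psi_{H_1},\tilde h]=0$, so that $D\tilde h=0$; that is, $\tilde h$ is $D$-parallel.

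Next I would diagonalize $\tilde h$. Because $D_{H_1}\tilde h=0$ its eigenvalues are constant, say $0<\mu_1<\cdots<\mu_m$, and the spectral decomposition $E=\bigoplus_{j=1}^m E_j$ is $D$-invariant and orthogonal with respect to both $H_1$ and $H_2$, with $H_2|_{E_j}=\mu_j H_1|_{E_j}$. The spectral projections $\pi_j$ are polynomials in $\tilde h$, hence also $D$-parallel, $D\pi_j=0$. The whole problem then reduces to excluding $m>1$.

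The hard part will be the degree bookkeeping at infinity, and this is exactly where the exhaustion hypothesis enters. I would first transfer analytic stability from $K$ to $H_1$, arguing as in the converse part of the proof of Theorem \ref{thm1}: for a $D$-invariant sub-bundle $S$ with projections $\pi_K,\pi_{H_1}$, mutual boundedness of $K$ and $H_1$ makes $|D\pi_K|\in L^2\iff|D\pi_{H_1}|\in L^2$, and in the $L^2$ case the Chern-Weil formula $(\ref{Chernweilbundle})$ together with a Simpson--Yau type integration by parts gives $\deg_g(S,D,K)=\deg_g(S,D,H_1)$, while both degrees are $-\infty$ otherwise; since $H_1$ is Poisson, Proposition \ref{polystable} and $K$-stability then yield that $D$ is $H_1$-analytically stable, and in particular $\deg_g(E,D,H_1)=\deg_g(E,D,K)$ is finite. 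The crucial point is that these integration-by-parts steps, which on a compact manifold are automatic, must now be justified over the level sets $\{\rho=R\}$: the role of $|d\log\rho|\in L^1$ is precisely to make the boundary contributions tend to zero as $R\to\infty$ (as in Simpson's Lemma 5.2 in \cite{Si1988} and Mochizuki \cite{Mo2020}), with $|\psi_K|\in L^2$ and $|Dh_1|\in L^2$ guaranteeing that the relevant Chern-Weil integrands are integrable. Once $H_1$-analytic stability is in hand, I would apply it to the $D$-parallel proper sub-bundle $E_1$: since $D\pi_1=0$, $(\ref{Chernweilbundle})$ gives $\deg_g(E_1,D,H_1)=-\int_M c\,\rank(E_1)\dvol_g$ where $D_{H_1}^\ast\psi_{H_1}=c\,\id_E$, so that $E_1$ has the same $H_1$-slope as $E$. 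If $m>1$ this contradicts the strict inequality in the definition of $H_1$-analytic stability. Hence $m=1$, $\tilde h=\mu_1\id_E$, and $\det\tilde h=1$ forces $\mu_1=1$, i.e. $H_1=H_2$.
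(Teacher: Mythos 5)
Your opening moves coincide with the paper's: setting $\tilde h=H_1^{-1}H_2$, using Lemma \ref{keylemma} together with $\det\tilde h=1$ to get $\Delta\tr\tilde h=|\tilde h^{-\frac12}\delta_{H_1}\tilde h|^2_{H_1}\geq0$, invoking the second part of Assumption \ref{assump1} to conclude $\delta_{H_1}\tilde h=0$, hence $D\tilde h=0$, and passing to the $D$-parallel spectral decomposition $E=\bigoplus_{j}E_j$. The divergence — and the gap — is in how you exclude $m>1$. You propose to transfer analytic stability from $K$ to $H_1$ ``as in the converse part of the proof of Theorem \ref{thm1}'' and then contradict $H_1$-stability on $E_1$. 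But that converse argument is not available here: it used two hypotheses of Theorem \ref{thm1} that Proposition \ref{uniqueness2} does not have, namely $|D_K^\ast\psi_K|\leq C\phi$ (hence $|D_K^\ast\psi_K|\in L^1$, which drives the Chern--Weil dichotomy $\deg_g(S,D,K)=-\infty$ or finite), and Assumption \ref{assump2} or completeness with finite volume (which justifies the Simpson/Yau limiting argument). You correctly propose to replace the latter by the cutoff $\chi_N=\eta(\rho/N)$ with $|d\log\rho|\in L^1$, but to conclude anything from $\int_M\chi_N\Delta f\,\dvol_g$ one must first know $\Delta f\in L^1$ (otherwise even $\int_M\Delta f$ is undefined and dominated convergence cannot be applied), and for your functions this integrability is simply not available: the comparison of total degrees of $E$ needs $\int_M\Delta\log\det h_1\dvol_g=0$ with $h_1=K^{-1}H_1$, and $\Delta\log\det h_1=2(\tr D_{H_1}^\ast\psi_{H_1}-\tr D_K^\ast\psi_K)$, where neither $\tr D_K^\ast\psi_K$ nor the Poisson function of $H_1$ is assumed integrable in this proposition, and $|Dh_1|\in L^2$ controls only first derivatives of $h_1$. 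The same obstruction hits $\Delta\log\det h_S$ for your destabilizing sub-bundles (and, as a smaller point, your equivalence $|D\pi_K|\in L^2\iff|D\pi_{H_1}|\in L^2$ also needs $|Dh_1|\in L^2$, not just mutual boundedness, and needs proof).

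The paper's proof is structured precisely to avoid any comparison with $H_1$-degrees. It compares $K$ with the direct-sum metric $H=\bigoplus_jK|_{E_j}$, so that $h:=K^{-1}H=\sum_j\pi_j^{\ast K}\circ\pi_j$ is built entirely from the $D$-parallel projections. Then $(\ref{deltaHdeltaK})$ gives $\delta_K\pi_j=-[h_1^{-1}\delta_Kh_1,\pi_j]\in L^2$ (this is where $|Dh_1|\in L^2$ and $|\psi_K|\in L^2$ enter), and the flatness of $D$ yields
\begin{equation}\begin{split}
D(\delta_Kh)=\sum_j\left(D\pi_j^{\ast K}\circ\delta_K\pi_j-2\pi_j^{\ast K}\circ[[\psi_K,\psi_K],\pi_j]\right),
\end{split}\end{equation}
a sum of products of $L^2$ quantities, whence $\Delta\log\det h\in L^1$. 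Only for this particular $h$ can the cutoff argument with $|d\log\rho|\in L^1$ be run, giving $\int_M\Delta\log\det h\,\dvol_g=0$, i.e. $\deg_g(E,D,K)=\sum_j\deg_g(E_j,D,K|_{E_j})$; summing the strict inequalities from $K$-analytic stability of the proper sub-bundles $E_j$ then forces $m=1$ directly, with no mention of $H_1$-stability. This computation — the only place where flatness and $|\psi_K|\in L^2$ are really used, and the reason the comparison metric $\bigoplus_jK|_{E_j}$ rather than $H_1$ is chosen — is the analytic heart of the proof and is missing from your proposal; without it, your degree bookkeeping at infinity cannot get started.
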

\begin{proof}
Similar, set $E=\mathop{\bigoplus}\limits_{j=1}^mE_j$ be the eigendecomposition of $h_{12}=H_1^{-1}H_2$ and $H_1|_{E_j}=c_iH_2|_{E_j}$
for some constants $c_j$. Let $\pi_j$ denotes the projection onto $E_j$ and $h_i=K^{-1}H_i$, then $\pi_j$ are bounded with respect to $K$,
as $K$ and $H_i$ are mutually bounded.
\par Now $(\ref{deltaHdeltaK})$ indicates $\delta_K\pi_j=\delta_{H_1}\pi_j-[h_1^{-1}\delta_Kh_1,\pi_j]=-[h_1^{-1}\delta_Kh_1,\pi_j]$, this implies $\delta_K\pi_j$ and $D\pi_j^{\ast K}$ are $L^2$. We consider the fiber metric $H$ determined by the direct sum of $K|_{E_j}$, then $H$
and $K$ are mutually bounded. By definition, we have $h:=K^{-1}H=\sum\limits_{j=1}^m\pi_j^{\ast K}\circ\pi_j$ and it follows
\begin{equation}\begin{split}
h^{-1}Dh=h^{-1}\sum\limits_{j=1}^mD\pi_j^{\ast K}\circ\pi_j,
h^{-1}\delta_Kh=h^{-1}\sum\limits_{j=1}^m\pi_j^{\ast K}\circ\delta_K\pi_j,
\end{split}\end{equation}
which implies $h^{-1}Dh$ and $h^{-1}\delta_Kh$ are $L^2$.
\par Next we compute $D(h^{-1}\delta_Kh)=-h^{-1}Dhh^{-1}\delta_Kh+h^{-1}D(\delta_Kh)$ and
\begin{equation}\begin{split}
D(\delta_Kh)
&=D(\sum\limits_{j=1}^m\pi_j^{\ast K}\circ\delta_K\pi_j)
\\&=\sum\limits_{j=1}^m\left(D\pi_j^{\ast K}\circ\delta_K\pi_j+\pi_j^{\ast K}\circ D(\delta_K\pi_j)\right)
\\&=\sum\limits_{j=1}^m\left(D\pi_j^{\ast K}\circ\delta_K\pi_j+4\pi_j^{\ast K}\circ [D_K^2,\pi_j]\right)
\\&=\sum\limits_{j=1}^m\left(D\pi_j^{\ast K}\circ\delta_K\pi_j-2\pi_j^{\ast K}\circ [[\psi_K,\psi_K],\pi_j]\right).
\end{split}\end{equation}
where we have use the flatness of $D$. As a consequence, we find $\Delta\log\det h$ is $L^1$.
\par Finally we set $\chi_N=\eta(\frac{\rho}{N})$, where $\eta$ is a nonnegative function with $\eta(t)=0$ if $t\geq2$ and $\eta(t)=1$ if $t\leq1$.
We then deduce
\begin{equation}\begin{split}
\int_M\chi_N\Delta\log\det h\dvol_g
&=-\int_M\nabla\chi_N\cdot\nabla\log\det h\dvol_g
\\&=-\int_{\{N\leq\rho\leq2N\}}\frac{\eta^{'}(\frac{\rho}{N})}{N}\nabla\rho\cdot\nabla\log\det h\dvol_g
\\&\leq\int_{\{N\leq\rho\leq2N\}}\frac{2|\eta^{'}(\frac{\rho}{N})|}{\rho}|\nabla\rho||\nabla\log\det h|\dvol_g
\\&=\int_{\{N\leq\rho\leq2N\}}2|\eta^{'}(\frac{\rho}{N})||\nabla\log\rho||\tr(h^{-1}\delta_Kh)|\dvol_g.
\end{split}\end{equation}
Combing this and using the fact that $\Delta\log\det h$ lies in $L^1$, it follows
\begin{equation}\begin{split}
-\int_M\tr D_{K}^\ast\psi_K\dvol_g&=-\int_M\tr D_H^\ast\psi_H\dvol_g+\frac{1}{2}\int_M\Delta\log\det h\dvol_g
\\&=-\sum\limits_{j=1}^m\int_M\tr D_{K|_{E_j}}^\ast\psi_{K|_{E_j}}\dvol_g.
\end{split}\end{equation}
Since $D$ is $K$-analytically stable, we obtain $m=1$ and the proof is completed.
\end{proof}
\subsection{Applications to the nonableian Hodge correspondence}
\begin{proposition}\label{Higgsstructure1}
Let $(X,\omega)$ be a noncompact complex curve satisfying the Assumption \ref{assump1}, and $(E,D)$ be a $K$-analytically stable flat bundle over $X$
with $|(D_K^\ast\psi_K)^\perp|\leq C\phi$ for a constant $C$, then there exists a Higgs structure on $E$.
\end{proposition}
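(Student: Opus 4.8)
The plan is to build the Higgs structure directly out of a Poisson metric. First I would invoke Proposition \ref{poissonexistence} (the existence half of Theorem \ref{thm1}): since $(X,\omega)$ satisfies Assumption \ref{assump1} and $(E,D)$ is flat, $K$-analytically stable with $|(D_K^\ast\psi_K)^\perp|\leq C\phi$, there is a Poisson metric $H$, so $D_H^\ast\psi_H=c_H\id_E$ for a function $c_H$, with $\det h=1$, $|h|\in L^\infty$, $|Dh|\in L^2$. Using the type decomposition of the metric connection and of the self-adjoint part, I set $\ol\pa_E:=D_H^{0,1}$, $\pa_E:=D_H^{1,0}$, $\theta:=\psi_H^{1,0}$ and $\theta^{\ast H}:=\psi_H^{0,1}$. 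Because $X$ is a complex curve there are no nonzero $(0,2)$- or $(2,0)$-forms, so $\ol\pa_E^2=F_{D_H}^{0,2}=0$ and $\theta\wedge\theta=0$ hold automatically; the entire content of the claim is therefore the holomorphicity of the Higgs field.

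To get holomorphicity I would extract two facts. Flatness $D^2=0$, split into its $H$-self-adjoint and $H$-skew-adjoint parts (note $D_H\psi_H$ is self-adjoint-valued while $F_{D_H}$ and $\psi_H\wedge\psi_H$ are skew-adjoint-valued), forces $D_H\psi_H=0$ together with $F_{D_H}+\psi_H\wedge\psi_H=0$. On the curve the $(1,1)$-component of $D_H\psi_H=0$ reads $\ol\pa_E\theta+\pa_E\theta^{\ast H}=0$, i.e. $\ol\pa_E\theta=-(\ol\pa_E\theta)^{\ast H}$ is skew-adjoint. Next, applying the K\"ahler identities on the curve (where $\Lambda\theta=\Lambda\theta^{\ast H}=0$) and using this relation, one computes $D_H^\ast\psi_H=\pa_E^\ast\theta+\ol\pa_E^\ast\theta^{\ast H}=-2\sqrt{-1}\,\Lambda\ol\pa_E\theta$ (up to the normalization of $\omega$). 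Since on a one-dimensional base every $(1,1)$-form equals its $\Lambda$-contraction times $\omega$, the Poisson equation $D_H^\ast\psi_H=c_H\id_E$ becomes
\[
\ol\pa_E\theta=\tfrac{\sqrt{-1}}{2}\,c_H\,\id_E\otimes\omega ,
\]
so that $\ol\pa_E\theta$ is central, a scalar multiple of $\id_E$.

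Finally I would remove this central part. Put $\theta_0:=\theta-\tfrac{1}{\rank(E)}(\tr\theta)\,\id_E$, the trace-free part of the Higgs field, which is a globally defined section of $\Omega^{1,0}(\End E)$. Since $\tr(\ol\pa_E\theta)=\ol\pa_E\tr\theta$ and $\ol\pa_E\theta$ is central, the trace-free part of $\ol\pa_E\theta$ vanishes; equivalently $\ol\pa_E\theta_0=(\ol\pa_E\theta)^\perp=\tfrac{\sqrt{-1}}{2}(D_H^\ast\psi_H)^\perp\otimes\omega=0$ by the Poisson property. As $\theta_0\wedge\theta_0=0$ and $\ol\pa_E^2=0$ automatically, $(\ol\pa_E,\theta_0)$ is a Higgs structure on $E$.

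The main obstacle is precisely this central/trace term: the naive Higgs field $\psi_H^{1,0}$ is holomorphic only when the Poisson metric is genuinely harmonic ($c_H\equiv0$), which need not hold under the present hypotheses, since only the trace-free part of $D_K^\ast\psi_K$ is controlled. The computation above is what makes the argument work, as it shows the obstruction $\ol\pa_E\theta$ is always central and hence killed by passing to the trace-free part; alternatively one could absorb $c_H$ by a conformal change $H\mapsto e^\lambda H$ solving the scalar Poisson equation $\Delta\lambda=-2c_H$ (cf. the use of Theorem 4.3 in \cite{WZ2011}), producing a harmonic metric and thus a flat Higgs structure, at the cost of having to solve that equation. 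The points I expect to require the most care are the sign and normalization in the K\"ahler-identity computation $D_H^\ast\psi_H=-2\sqrt{-1}\,\Lambda\ol\pa_E\theta$ and the verification that $\theta_0$ is globally well defined and independent of local frames.
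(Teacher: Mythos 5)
Your proposal is correct and follows essentially the same route as the paper: both extract the Higgs structure $(D_H^{0,1},\psi_H^{\perp,1,0})$ from the Poisson metric of Theorem \ref{thm1}, use the self-adjoint part of flatness to get $D_H\psi_H=0$, and use the Poisson equation to kill the trace-free part of the obstruction to holomorphicity, with one-dimensionality of $X$ making $\ol\pa_E^2=0$ and $\theta\wedge\theta=0$ automatic. The only differences are cosmetic: the paper takes trace-free parts first and phrases the key identity via the Hodge star ($D_H^\ast\psi_H^\perp=\sqrt{-1}\ast D_H(\psi_H^{\perp,1,0}-\psi_H^{\perp,0,1})$), whereas you work with the full $\psi_H^{1,0}$, use the $\Lambda$-contraction K\"ahler identities, and subtract the central part at the end.
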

\begin{proof}
According to Theorem \ref{thm1}, there is a Poisson metric $H$ on $(E,D)$ and we have
\begin{equation}\begin{split}
D_H\psi_H^{\perp}=D_H(\psi_H^{\perp,1,0}+\psi_H^{\perp,0,1})=0,
\end{split}\end{equation}
\begin{equation}\begin{split}
D_H^\ast\psi_H^\perp=\sqrt{-1}\ast D_H(\psi_H^{\perp,1,0}-\psi_H^{\perp,0,1})=0,
\end{split}\end{equation}
where $\psi_H^\perp$ is the trace-free part of $\psi_H$. Then we easily get $D_H^{0,1}\psi_H^{\perp,1,0}=0$ and the pair
$(D_H^{0,1},\psi_H^{\perp,1,0})$ gives rise a Higgs structure on $E$.
\end{proof}
For higher dimensional case, we have
\begin{proposition}\label{Higgsstructure2}
Let $(X,\omega)$ be a complete K\"{a}hler manifold with bounded Ricci curvature from below
and satisfying the Assumption \ref{assump1}. Assume $(E,D)$ be a $K$-analytically stable flat bundle over $X$ with
$|(D_K^\ast\psi_K)^\perp|\leq C\phi$ for a constant $C$ and $|\psi_K|\in L^2$. Then there exists a Higgs structure on $E$.
\end{proposition}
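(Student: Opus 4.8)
The plan is to build the Higgs structure from the Poisson metric produced by Theorem~\ref{thm1} and then to promote the harmonicity of the associated self-adjoint form to pluriharmonicity by a Siu--Sampson Bochner argument, carried out on the complete noncompact base with cutoff functions. First I would invoke Proposition~\ref{poissonexistence} --- whose hypotheses (Assumption~\ref{assump1}, $K$-analytic stability, and $|(D_K^\ast\psi_K)^\perp|\leq C\phi$) are all in force --- to obtain a Poisson metric $H$ with $\det h=1$, $|h|\in L^\infty$ and $|Dh|\in L^2$, where $h=K^{-1}H$. Let $\psi_H^\perp$ be the trace-free part of $\psi_H$. Flatness gives $D_H\psi_H=0$, whence $d\tr\psi_H=0$ and $D_H\psi_H^\perp=0$, while the Poisson equation gives $D_H^\ast\psi_H^\perp=(D_H^\ast\psi_H)^\perp=0$; thus $\psi_H^\perp$ is a harmonic $\End(E)$-valued $1$-form. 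Putting $\theta=\psi_H^{\perp,1,0}$, $\ol\pa_E=D_H^{0,1}$ and noting $\psi_H^{\perp,0,1}=\theta^{\ast H}$, the three defining conditions of a Higgs structure reduce to the single pluriharmonicity statement
\begin{equation}\begin{split}
D_H^{0,1}\psi_H^{\perp,1,0}=0,\qquad [\psi_H^{\perp,1,0},\psi_H^{\perp,1,0}]=0,
\end{split}\end{equation}
since $\ol\pa_E\theta=D_H^{0,1}\psi_H^{\perp,1,0}$, and both $\theta\wedge\theta=\tfrac12[\theta,\theta]$ and $\ol\pa_E^2=-\tfrac12[\psi_H^{\perp,0,1},\psi_H^{\perp,0,1}]$ vanish once $[\theta,\theta]=0$ (the scalar part $\tfrac1{\rank(E)}(\tr\psi_H)\id_E$ commutes with everything and drops out of the brackets). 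Finally, feeding $|\psi_K|\in L^2$ together with $|h|\in L^\infty$ and $|Dh|\in L^2$ into the pointwise bound $(\ref{sec38})$ yields $|\psi_H^\perp|\in L^2$, the integrability input for what follows.

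I would then set up two Bochner identities. The first is the Weitzenböck identity for the harmonic form $\psi_H^\perp$, the static counterpart of $(\ref{phimodulet})$,
\begin{equation}\begin{split}
\Delta|\psi_H^\perp|^2=|[\psi_H^\perp,\psi_H^\perp]|^2+2(\psi_H^\perp\circ\Ric,\psi_H^\perp)_H+2|\nabla_H\psi_H^\perp|^2,
\end{split}\end{equation}
where the term $D_HD_H^\ast\psi_H=dc_H\otimes\id_E$ contributes nothing, as it pairs to zero against the trace-free $\psi_H^\perp$. Integrating this against a cutoff $\chi_N^2$ and integrating by parts, the Ricci lower bound controls $(\psi_H^\perp\circ\Ric,\psi_H^\perp)_H\geq -C|\psi_H^\perp|^2$, which is integrable by the previous step, and the divergence term is absorbed; letting $N\to\infty$ produces the global gradient bound $|\nabla_H\psi_H^\perp|\in L^2$. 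This is exactly where completeness (for cutoffs $\chi_N\uparrow1$ with $\int|\nabla\chi_N|^2$ under control) and the Ricci lower bound are consumed. The second is the refined Siu--Sampson identity on the Kähler manifold $X$: combining the Weitzenböck formula, the flatness relation $D_H^2=-\tfrac12[\psi_H,\psi_H]$ and the Kähler identities gives, as in the compact theory of Siu~\cite{Si1980}, Sampson~\cite{Sa1986} and Corlette~\cite{Co1988} (see also \cite{Si1992}),
\begin{equation}\begin{split}
\tfrac12\Delta|\theta|^2\geq|D_H^{0,1}\theta|^2+\tfrac14|[\theta,\theta]|^2.
\end{split}\end{equation}
The decisive Kähler feature is that the Ricci curvature of the domain cancels in the formula for the $(1,0)$-part $\theta$, so the right-hand side consists solely of the two nonnegative terms whose vanishing is the sought pluriharmonicity.

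It then remains to integrate the Siu--Sampson inequality against $\chi_N^2$. Integration by parts gives $\int_X\chi_N^2(|D_H^{0,1}\theta|^2+\tfrac14|[\theta,\theta]|^2)\dvol_g\leq -\int_X\chi_N\,\nabla\chi_N\cdot\nabla|\theta|^2\dvol_g$, and the right-hand side is estimated by Cauchy--Schwarz through $(\int_X\chi_N^2|\nabla_H\theta|^2)^{1/2}(\int_X|\nabla\chi_N|^2|\theta|^2)^{1/2}$. The first factor is bounded by the gradient estimate just obtained, and the second tends to $0$ as $N\to\infty$ since $|\theta|\in L^2$ and the $\chi_N$ are supported on receding annuli. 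Hence $\int_X(|D_H^{0,1}\theta|^2+\tfrac14|[\theta,\theta]|^2)\dvol_g=0$, so $D_H^{0,1}\psi_H^{\perp,1,0}=0$ and $[\psi_H^{\perp,1,0},\psi_H^{\perp,1,0}]=0$, and therefore $(\ol\pa_E,\theta)=(D_H^{0,1},\psi_H^{\perp,1,0})$ is a Higgs structure on $E$, as in the curve case of Proposition~\ref{Higgsstructure1}. I expect the main obstacle to be the noncompact boundary analysis rather than the algebra: all three extra hypotheses (completeness, Ricci bounded below, $|\psi_K|\in L^2$) are needed precisely to secure the global bounds $|\psi_H^\perp|,|\nabla_H\psi_H^\perp|\in L^2$ and the consequent vanishing of the cutoff boundary terms, phenomena that are invisible in the compact Siu--Sampson--Corlette argument.
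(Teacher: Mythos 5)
Your first half coincides exactly with the paper's own argument: producing the Poisson metric via Proposition \ref{poissonexistence}, the bound $|\psi_H^\perp|_H\in L^2$ as in $(\ref{sec422})$, and the Bochner-plus-cutoff step giving $|\nabla_H\psi_H^\perp|_H\in L^2$ are precisely $(\ref{sec421})$--$(\ref{sec423})$, and your reduction of the Higgs conditions to $D_H^{0,1}\psi_H^{\perp,1,0}=0$ and $\psi_H^{\perp,1,0}\wedge\psi_H^{\perp,1,0}=0$ is correct. The gap is the ``refined Siu--Sampson identity'' $\tfrac12\Delta|\theta|^2\geq|D_H^{0,1}\theta|^2+\tfrac14|[\theta,\theta]|^2$: no such pointwise inequality exists, because the Ricci curvature of the base does \emph{not} cancel pointwise in the Bochner formula for the $(1,0)$-part; it only cancels under the integral sign. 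Indeed, already in the abelian case one finds, for a harmonic metric on a flat line bundle over a K\"ahler manifold,
\begin{equation*}
\tfrac12\Delta|\theta|^2=\Ric(\theta,\ol\theta)+|\nabla_H^{1,0}\theta|^2+|\nabla_H^{0,1}\theta|^2,
\end{equation*}
with a genuine Ricci term. Concrete counterexample: let $\Sigma$ be a compact hyperbolic Riemann surface, $\theta_0\neq0$ a holomorphic $1$-form, and take the trivial bundle with flat connection $D=d+2\mathrm{Re}(\theta_0)$ and the standard metric $H$; then $H$ is harmonic (hence Poisson), $\theta=\psi_H^{1,0}=\theta_0$, and $D_H^{0,1}\theta=0$, $[\theta,\theta]=0$, so your inequality asserts that $|\theta|^2$ is subharmonic on the compact surface $\Sigma$, hence constant --- impossible, since $\theta_0$ must vanish somewhere ($\deg$ of the canonical bundle is $2g-2>0$). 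Equivalently, at the maximum of $|\theta|^2$ (which lies away from the zeros) one has $\Delta|\theta|^2=|\theta|^2\,\Delta\log|\theta|^2<0$. Pulling this example back to $\mathbb{C}\times\Sigma$, which is complete, has Ricci bounded below and satisfies Assumption \ref{assump1}, shows the failure persists in dimension $\geq2$, where the proposition has content; and since your inequality is a local statement, the global hypotheses of the proposition (stability, $L^2$ conditions) cannot rescue it.

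The repair is to run the Siu--Sampson mechanism in integrated form, which is what the paper does and what your two $L^2$ estimates are exactly designed for. Setting $G_H=(D_H^{0,1}+\psi_H^{1,0})^2$, the Poisson equation and the K\"ahler identities give $\sqrt{-1}\Lambda_\omega G_H^\perp=\tfrac12(D_H^\ast\psi_H)^\perp=0$, i.e.\ $G_H^\perp$ is primitive; the flatness relations $(\ref{sec424})$ then yield the two pointwise identities $(\ref{sec425})$ and $(\ref{sec426})$:
\begin{equation*}
|G_H^\perp|_H^2\dvol_\omega=\tr(G_H^\perp\wedge G_H^\perp)\wedge\tfrac{\omega^{n-2}}{(n-2)!}
=-\ol\pa\left(\tr(\psi_H^{1,0}\wedge D_H^{1,0}\psi_H^\perp)\wedge\tfrac{\omega^{n-2}}{(n-2)!}\right),
\end{equation*}
so the nonnegative density $|G_H^\perp|_H^2\dvol_\omega$ is exact with primitive in $L^1$ (Cauchy--Schwarz from $\psi_H^\perp,\nabla_H\psi_H^\perp\in L^2$, your estimates). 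Yau's lemma on the complete manifold then forces $\int_X|G_H^\perp|_H^2\dvol_\omega=0$, hence $G_H^\perp\equiv0$, which is exactly $D_H^{0,1}\psi_H^{\perp,1,0}=0$ and $\psi_H^{\perp,1,0}\wedge\psi_H^{\perp,1,0}=0$. So the Ricci cancellation you invoke is real, but it happens in this Stokes-type identity for an exact $(n,n)$-form, not in a pointwise differential inequality; your cutoff integration must be applied to that identity (via Yau's lemma) rather than to a Laplacian of $|\theta|^2$.
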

\begin{proof}
Due to Theorem \ref{thm1}, there is a Poisson metric $H$ and we compute
\begin{equation}\begin{split}\label{sec421}
\Delta|\psi_H^\perp|^2_H&=-2(\nabla_H^\ast\nabla_H\psi_H^\perp,\psi_H^\perp)_H+2|\nabla_H\psi_H^\perp|^2_H
\\&=-2g^{ij}(D_H^2(\cdot,\frac{\pa}{\pa x^i})(\psi_H^\perp(\frac{\pa}{\pa x^j})),\psi_H^\perp)_H
\\&+2g^{ij}(\psi_H^\perp(R_{\nabla}(\cdot,\frac{\pa}{\pa x^i})(\frac{\pa}{\pa x^j})),\psi_H^\perp)_H+2|\nabla_H\psi_H^\perp|_H^2
\\&=g^{ij}([[\psi_H,\psi_H(\frac{\pa}{\pa x^i})],\psi_H^\perp(\frac{\pa}{\pa x^j})]+2\psi_H^\perp\circ\Ric,\psi_H^\perp)_H
+2|\nabla_H\psi_H^\perp|_H^2
\\&=|[\psi_H^\perp,\psi_H^\perp]|_H^2+2(\psi_H^\perp\circ\Ric,\psi_H^\perp)_H+2|\nabla_H\psi_H^\perp|^2_H
\\&\geq-C_\ast|\psi_H^\perp|_H^2+2|\nabla_H\psi_H^\perp|^2_H,
\end{split}\end{equation}
where $-C_\ast$ the lower bound of Ricci curvature. Let $R$ be any positive constant and we fix point $x_0$, choose a cut-off function $\eta$ satisfying
\begin{equation}\begin{split}
\left\{ \begin{array}{ll}
\eta(x)=1, x\in B_{x_0}(R),\\
\eta(x)=0,x\in X\setminus B_{x_0}(2R),\\
0\leq\eta\leq1, |\nabla\eta|\leq\frac{C_1}{R},
\end{array}\right.
\end{split}\end{equation}
where $C_1$ is a positive constant and $B_{x_0}(R)$ is the geodesics ball centered at $x_0$ with radius $R$. Then we have
\begin{equation}\begin{split}
2\int_X\eta^2|\nabla_H\psi_H^\perp|^2_H\dvol_\omega
&\leq\int_X\eta^2(\Delta|\psi_H^\perp|^2_H+C_\ast|\psi_H^\perp|^2_H)\dvol_\omega
\\&\leq\int_X4\eta|\nabla\eta||\psi_H^\perp|_H|\nabla_H\psi_H^\perp|_H\dvol_\omega+C_\ast\int_X\eta^2|\psi_H^\perp|_H^2\dvol_\omega
\\&\leq\int_X\eta^2|\nabla_H\psi_H^\perp|_H^2\dvol_\omega+\int_X(C_\ast\eta^2+4|\nabla\eta|^2)|\psi_H^\perp|_H^2\dvol_\omega
\\&\leq\int_X\eta^2|\nabla_H\psi_H^\perp|_H^2\dvol_\omega+\int_X(C_\ast\eta^2+\frac{4C_1^2}{R^2})|\psi_H^\perp|_H^2\dvol_\omega.
\end{split}\end{equation}
And it also holds
\begin{equation}\begin{split}\label{sec422}
\int_{X}|\psi_H^\perp|_H^2\dvol_\omega
&=\int_{X}|\psi_K^\perp-\frac{1}{2}h^{-1}Dh+h^{-1}[\psi_K^\perp,h]|_H^2\dvol_\omega
<\infty,
\end{split}\end{equation}
where $h=K^{-1}H$. So by letting $R$ goes infinity, we see
\begin{equation}\begin{split}\label{sec423}
\int_X|\nabla_H\psi_H^\perp|^2_H\dvol_\omega\leq C_\ast\int_{X}|\psi_H^\perp|_H^2\dvol_\omega<\infty.
\end{split}\end{equation}
\par Now let's consider the pseudo-curvature operator $G_H=(D_H^{0,1}+\psi_H^{1,0})^2$. Notice the flatness of $D$ implies
\begin{equation}\begin{split}\label{sec424}
D_H^{1,0}\psi_H^{0,1}=-D_H^{0,1}\psi_H^{1,0},D_H^{1,0}D_H^{1,0}=-\frac{1}{2}[\psi_H^{1,0},\psi_H^{1,0}],
D_H^{0,1}D_H^{0,1}=-\frac{1}{2}[\psi_H^{0,1},\psi_H^{0,1}],
\end{split}\end{equation}
and it follows that
\begin{equation}\begin{split}
G_H^\ast
&=\left((D_H^{0,1})^2+D_H^{0,1}\psi_H^{1,0}+\psi_H^{1,0}\wedge\psi_H^{1,0}\right)^\ast
\\&=-D_H^{1,0}D_H^{1,0}+D_H^{1,0}\psi_H^{0,1}-\psi_H^{0,1}\wedge\psi_H^{0,1}
\\&=G^{2,0}_H-G_H^{1,1}+G^{0,2}_H.
\end{split}\end{equation}
By K\"{a}hler identity we know $\sqrt{-1}\Lambda_\omega G_H^\perp=\frac{1}{2}(D_H^\ast\psi_H)^\perp=0$ and
\begin{equation}\begin{split}\label{sec425}
\tr(G_H^\perp\wedge G_H^\perp)\wedge\frac{\omega^{n-2}}{(n-2)!}
&=\tr\left(G_H^\perp\wedge\ast(G_H^\perp)^\ast\right)
=|G_H^\perp|_H^2\dvol_\omega,
\end{split}\end{equation}
where $n$ is the complex dimension of $X$. On the other hand, a direct calculation shows
\begin{equation}\begin{split}\label{sec426}
\tr(G_H^\perp\wedge G_H^\perp)\wedge\omega^{n-2}
&=-2\tr(\psi_H^{0,1}\wedge\psi_H^{0,1}\wedge\psi_H^{1,0}\wedge\psi_H^{1,0})\wedge\omega^{n-2}
\\&-\tr(D_H^{0,1}\psi_H^{1,0}\wedge D_H^{1,0}\psi_H^{0,1})\wedge\omega^{n-2}
\\&-\frac{\tr(D_H^{0,1}\psi_H^{1,0})\wedge\tr(D_H^{0,1}\psi_H^{1,0})}{\rank(E)}\wedge\omega^{n-2}
\\&=-\ol\pa\left(\tr(\psi_H^{1,0}\wedge D_H^{1,0}\psi_H^{0,1})\wedge\omega^{n-2}\right)
\\&+\ol\pa\left(\frac{\tr \psi_H^{1,0}\wedge\tr(D_H^{1,0}\psi_H^{0,1})}{\rank(E)}\wedge\omega^{n-2}\right)
\\&=-\ol\pa\left(\tr(\psi_H^{1,0}\wedge D_H^{1,0}\psi_H^\perp)\wedge\omega^{n-2}\right).
\end{split}\end{equation}
where we have used $(\ref{sec424})$, $D_H^{0,1}\psi_H^{0,1}=0$ and $[D_H^{1,0},D_H^{0,1}]=-[\psi_H^{1,0},\psi_H^{0,1}]$.
\par Now as it holds
\begin{equation}\begin{split}
D_H^{1,0}\psi_H^\perp=\sum\limits_{i=1}^ndz^{i}\wedge\nabla_{H,\frac{\pa}{\pa z^i}}\psi_H^\perp,
\end{split}\end{equation} it follows from $(\ref{sec423})$ that
\begin{equation}\begin{split}\label{sec427}
\int_X|\tr(\psi_H^{1,0}\wedge D_H^{1,0}\psi_H^\perp)\wedge\omega^{n-2}|\dvol_\omega<\infty.
\end{split}\end{equation}
Therefore, by $(\ref{sec425})$, $(\ref{sec426})$, $(\ref{sec427})$ and Yau's Lemma in \cite{Ya1976}, we conclude $G_H^\perp$ vanishes
identically and the pair $(D_H^{0,1},\psi_H^{\perp,1,0})$ is a Higgs structure on $E$.
\end{proof}
\begin{proof}[\textup{\textbf{Proof of Theorem \ref{thm2}}}]
Proposition \ref{Higgsstructure1} and Proposition \ref{Higgsstructure2} indicate $(D_H^{0,1},\psi_H^{\perp,1,0})$
determines a Higgs structure on $E$. Moreover, we see the corresponding Hitchin-Simpson curvature
\begin{equation}\begin{split}
F_{D_H^{0,1},\psi_H^{\perp,1,0},H}
=D_H^2+D_H\psi_H^{\perp}+\frac{1}{2}[\psi_H^{\perp},\psi_H^{\perp}]=0,
\end{split}\end{equation}
where we have used the flatness of $D$.
Hence we complete of proof of $(1)$ and $(2)$. For $(3)$, if $(X,\omega)$ has nonnegative Ricci curvature, we have for any $\epsilon>0$ that
\begin{equation}\begin{split}
\Delta(\frac{1}{2}|\psi_H^\perp|^2_H+\epsilon)^{\frac{1}{2}}
&=\frac{1}{4}\frac{\Delta|\psi_H^\perp|^2_H}{(\frac{1}{2}|\psi_H^\perp|^2_H+\epsilon)^{\frac{1}{2}}}
-\frac{1}{4}\frac{|\frac{1}{2}d|\psi_H^\perp|^2_H|^2}{(\frac{1}{2}|\psi_H^\perp|^2_H+\epsilon)^{\frac{3}{2}}}
\\&\geq\frac{1}{4}\frac{2|\nabla_H\psi_H^\perp|^2_H}{(\frac{1}{2}|\psi_H^\perp|^2_H+\epsilon)^{\frac{1}{2}}}
-\frac{1}{4}\frac{|\psi_H^\perp|^2_H|\nabla_H\psi_{H}^\perp|_H^2}{(\frac{1}{2}|\psi_H^\perp|^2_H+\epsilon)^{\frac{3}{2}}}
\\&=\frac{1}{2}\frac{|\nabla_H\psi_{H}^\perp|_H^2}{(\frac{1}{2}|\psi_H^\perp|^2_H+\epsilon)^{\frac{1}{2}}}(1-\frac{\frac{1}{2}|\psi_H^\perp|^2_H}
{\frac{1}{2}|\psi_{H}^\perp|_H^2+\epsilon})
\\&\geq0,
\end{split}\end{equation}
and by letting $\epsilon$ goes zero, it follows that $|\psi_H^\perp|_H$ is subharmonic. On the other hand, as $||\psi_H^\perp||_{H,L^2}<\infty$,
Yau's Liouville theorem in \cite{Ya1976} applies and we conclude $|\psi_H^\perp|_H$ is constant.
But the volume of $(X,\omega)$ is infinite, so we know $\psi_H^\perp=0$ and then $(3)$ follows easily.
\end{proof}
In a similar way, using Corollary \ref{cor2} instead, we obtain the following proposition, which means the harmonic metric produced there is
pluriharmonic.
\begin{proposition}\label{Higgsstructure3}
Let $(X,\omega)$ be a K\"{a}hler manifold satisfying the Assumptions \ref{assump1}, \ref{assump2'} with $\phi=1$ and $(E,D)$ be a
$K$-analytically stable flat bundle over $X$ such that $|D_K^\ast\psi_K|\in L^\infty$ and $|\psi_K|\in L^2$.
Assume either $\dim_{\mathbb{C}}X=1$ or $(X,\omega)$ being complete with bounded Ricci curvature from below, then $(E,D)$ comes from a Higgs bundle.
\end{proposition}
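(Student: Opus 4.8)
The plan is to repeat the arguments of Propositions \ref{Higgsstructure1} and \ref{Higgsstructure2} almost verbatim, the only change being that I feed in the \emph{harmonic} metric of Corollary \ref{cor2} in place of the Poisson metric of Theorem \ref{thm1}. The payoff of this substitution is that a harmonic metric satisfies $D_H^\ast\psi_H=0$ in full, rather than only in its trace-free part, so the entire Higgs field $\psi_H^{1,0}$ (not merely $\psi_H^{\perp,1,0}$) will turn out to be holomorphic. Concretely, I would first invoke Corollary \ref{cor2}, whose hypotheses are exactly the present ones -- Assumption \ref{assump1}, Assumption \ref{assump2'} with $\phi=1$, $|D_K^\ast\psi_K|\in L^\infty$, $|\psi_K|\in L^2$, flatness and $K$-analytic stability -- to obtain a harmonic metric $H$ with $\det h=1$, $|h|\in L^\infty$ and $|Dh|\in L^2$ for $h=K^{-1}H$. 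It then suffices to show that the pseudo-curvature $G_H=(D_H^{0,1}+\psi_H^{1,0})^2$ vanishes identically, since $G_H=0$ is precisely the statement that $(D_H^{0,1},\psi_H^{1,0})$ is a Higgs structure, i.e.\ that $H$ is pluriharmonic.

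When $\dim_{\mathbb{C}}X=1$ the argument is pointwise and immediate, as in Proposition \ref{Higgsstructure1}. Flatness of $D$ gives $D_H\psi_H=D_H(\psi_H^{1,0}+\psi_H^{0,1})=0$, while harmonicity gives $D_H^\ast\psi_H=\sqrt{-1}\ast D_H(\psi_H^{1,0}-\psi_H^{0,1})=0$ for the full $\psi_H$; adding and subtracting these yields $D_H^{0,1}\psi_H^{1,0}=0$. On a curve the $(0,2)$ and $(2,0)$ components of $G_H$ vanish for dimensional reasons, so $G_H=0$ follows at once.

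For the higher-dimensional case I would follow Proposition \ref{Higgsstructure2} with $\psi_H^\perp$ replaced throughout by $\psi_H$. The static Weitzenb\"ock identity, the harmonic analogue of (\ref{phimodulet}) and of (\ref{sec421}), reads
\begin{equation*}
\Delta|\psi_H|_H^2=|[\psi_H,\psi_H]|_H^2+2(\psi_H\circ\Ric,\psi_H)_H+2|\nabla_H\psi_H|_H^2\geq-C_\ast|\psi_H|_H^2+2|\nabla_H\psi_H|_H^2,
\end{equation*}
where $-C_\ast$ is the lower Ricci bound. Testing against the square of a geodesic cut-off $\eta$ supported in $B_{x_0}(2R)$ with $|\nabla\eta|\leq C_1/R$, and using that $|\psi_H|\in L^2$ -- which follows from $|\psi_K|\in L^2$, $|h|\in L^\infty$ and $|Dh|\in L^2$ via (\ref{psiHpsiK}) -- I let $R\to\infty$ to conclude $\int_X|\nabla_H\psi_H|_H^2\dvol_\omega<\infty$, exactly as in (\ref{sec421})--(\ref{sec423}). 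Completeness is what makes the exhausting balls available and the lower Ricci bound is what absorbs the curvature term.

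With these integrability bounds secured, the K\"ahler identity gives $\sqrt{-1}\Lambda_\omega G_H=\tfrac12 D_H^\ast\psi_H=0$ for the full $G_H$, whence $\tr(G_H\wedge G_H)\wedge\frac{\omega^{n-2}}{(n-2)!}=|G_H|_H^2\dvol_\omega$ pointwise, as in (\ref{sec425}). A Chern-Weil manipulation parallel to (\ref{sec426}), using the flatness relations (\ref{sec424}) and $\tr(\psi_H^{1,0}\wedge\psi_H^{1,0})=0$, rewrites $\tr(G_H\wedge G_H)\wedge\omega^{n-2}$ as a $\ol\pa$-exact form whose primitive involves $\tr(\psi_H^{1,0}\wedge D_H^{1,0}\psi_H)\wedge\omega^{n-2}$, and this primitive is $L^1$ by the $L^2$-bound on $\nabla_H\psi_H$. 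Yau's Lemma in \cite{Ya1976} then forces $|G_H|_H^2\equiv0$, so $G_H=0$ and the proof is complete. The step I expect to be the main obstacle is precisely this higher-dimensional integrability: obtaining $\nabla_H\psi_H\in L^2$ and an $L^1$ primitive simultaneously is where completeness and the Ricci lower bound are indispensable, and it is the only place where the argument genuinely leaves the soft pointwise reasoning available on a curve.
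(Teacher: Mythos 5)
Your proposal is correct and is precisely the paper's intended argument: the paper gives no separate proof of Proposition \ref{Higgsstructure3}, remarking only that it follows ``in a similar way, using Corollary \ref{cor2} instead,'' i.e.\ by feeding the harmonic metric of Corollary \ref{cor2} into the proofs of Propositions \ref{Higgsstructure1} and \ref{Higgsstructure2} with $\psi_H$ replacing $\psi_H^\perp$ throughout. Your write-up fleshes this out exactly as intended, including the key payoff that harmonicity makes the full contraction $\sqrt{-1}\Lambda_\omega G_H$ (not just its trace-free part) vanish, so that $G_H=0$ and the flat connection $D$ itself coincides with the Hitchin--Simpson connection of the resulting Higgs structure.
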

\par Suppose $(E,D)$ be a flat bundle comeing from a Higgs bundle $(E,\ol\pa_E,\theta)$ via a harmonic metric $H$, then we call $(E,D,H)$ is a
harmonic bundle. Denote by $H_{DR}^\ast(X,E)$ and $H_{Dol}^\ast(X,E)$ be the set of $L^\infty$-elements in the cohomology groups of the complexes $(\Omega^\ast(E),D)$
and $(\Omega^\ast(E),\ol\pa_{E,\theta})$ respectively.
\begin{lemma}\label{isomorphic}
Assume $(X,\omega)$ be a K\"{a}hler manifold satisfying Assumption \ref{assump1}.
\begin{enumerate}
\item If $(E,D)$ be a flat bundle and $H$ is a harmonic metric, then any $D$-parallel $L^\infty$-section is also $D_H^{0,1}+\psi_H^{1,0}$-parallel.
\item If $(E,\ol\pa_E,\theta)$ be a Higgs bundle and $H$ is a Hermitian-Einstein metric such that the $\sqrt{-1}\Lambda_\omega F_{\ol\pa_E,\theta,H}=0$,
then any $\ol\pa_{E,\theta}$-parallel $L^\infty$-section is also $D_{\ol\pa_E,\theta,H}$-parallel.
\item If $(E,D,H)$ be a harmonic bundle, then $H_{DR}^0(X,E)=H_{Dol}^0(X,E)$.
\end{enumerate}
\end{lemma}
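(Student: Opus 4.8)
The plan is to establish all three statements through one Weitzenb\"ock-type mechanism: in each case I compute $\Delta|s|_H^2$ and show it is a sum of pointwise squares, so that $|s|_H^2$ is subharmonic. Since $s\in L^\infty$ makes $|s|_H^2$ a nonnegative bounded function, the second clause of Assumption \ref{assump1} forces $\Delta|s|_H^2=0$ and hence the vanishing of each square. It is worth recording first that for a harmonic bundle the Hitchin-Simpson connection coincides with the flat connection: with $\ol\pa_E=D_H^{0,1}$, $\theta=\psi_H^{1,0}$, $\theta^{\ast H}=\psi_H^{0,1}$ and $\pa_H=D_H^{1,0}$ one has $D_{\ol\pa_E,\theta,H}=D_H+\psi_H=D$, so that $(3)$ will follow formally once $(1)$ and $(2)$ are in hand.

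For $(1)$, decompose $D=D_H+\psi_H$ as in $(\ref{decomposition})$; a $D$-parallel section obeys $D_Hs=-\psi_Hs$. Working in a normal coordinate frame $\{e_i\}$ at a fixed point, I would expand
\[
\Delta|s|_H^2=2\,\mathrm{Re}\,H\Big(\sum_iD_{H,e_i}D_{H,e_i}s,s\Big)+2|D_Hs|_H^2,
\]
substitute $D_{H,e_i}s=-\psi_H(e_i)s$, and use the harmonic equation $(\ref{harmonicequation})$ in the form $\sum_i(D_{H,e_i}\psi_H)(e_i)=-D_H^\ast\psi_H=0$ to cancel the term carrying the derivative of $\psi_H$. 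What remains is
\[
\Delta|s|_H^2=4|\psi_Hs|_H^2\geq0.
\]
Assumption \ref{assump1} then forces $\Delta|s|_H^2=0$, whence $\psi_Hs=0$ and $D_Hs=0$; in particular $(D_H^{0,1}+\psi_H^{1,0})s=0$, which is exactly $\ol\pa_{E,\theta}$-parallelism.

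For $(2)$, the hypothesis $(\ol\pa_E+\theta)s=0$ separates by bidegree into $\ol\pa_Es=0$ and $\theta s=0$. From $\ol\pa_Es=0$ the Bochner-Kodaira identity for the Chern connection $\pa_H+\ol\pa_E$ reads
\[
\sqrt{-1}\Lambda_\omega\pa\ol\pa|s|_H^2=|\pa_Hs|_H^2-(\sqrt{-1}\Lambda_\omega F_{\ol\pa_E,H}s,s)_H.
\]
The Hermitian-Einstein hypothesis $\sqrt{-1}\Lambda_\omega F_{\ol\pa_E,\theta,H}=0$ lets me trade the Chern curvature for the Higgs field: the $(1,1)$-part of $F_{\ol\pa_E,\theta,H}$ equals $F_{\ol\pa_E,H}+[\theta,\theta^{\ast H}]$, so $\sqrt{-1}\Lambda_\omega F_{\ol\pa_E,H}=-\sqrt{-1}\Lambda_\omega[\theta,\theta^{\ast H}]$, and since $\theta s=0$ only the summand $\theta\circ\theta^{\ast H}$ acts nontrivially on $s$, giving $(\sqrt{-1}\Lambda_\omega[\theta,\theta^{\ast H}]s,s)_H=|\theta^{\ast H}s|_H^2$. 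Recalling $\Delta=2\sqrt{-1}\Lambda_\omega\pa\ol\pa$ on functions, I obtain
\[
\Delta|s|_H^2=2|\pa_Hs|_H^2+2|\theta^{\ast H}s|_H^2\geq0.
\]
Assumption \ref{assump1} forces $\pa_Hs=0$ and $\theta^{\ast H}s=0$, i.e. $(\pa_H+\theta^{\ast H})s=0$; together with $\ol\pa_Es=0$ and $\theta s=0$ this is $D_{\ol\pa_E,\theta,H}s=0$.

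For $(3)$, degree-zero classes carry no coboundaries, and both $D$-parallel and $\ol\pa_{E,\theta}$-parallel sections are smooth by ellipticity, so the bounded elements of $H_{DR}^0(X,E)$ and $H_{Dol}^0(X,E)$ are exactly the bounded $D$- respectively $\ol\pa_{E,\theta}$-parallel sections. Part $(1)$ yields $H_{DR}^0(X,E)\subseteq H_{Dol}^0(X,E)$; and since the bundle is harmonic, $F_{\ol\pa_E,\theta,H}=D^2=0$, so in particular $\sqrt{-1}\Lambda_\omega F_{\ol\pa_E,\theta,H}=0$ and part $(2)$ applies, giving the reverse inclusion via $D_{\ol\pa_E,\theta,H}=D$; hence equality. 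I expect the delicate point to be the sign bookkeeping in step $(2)$: one must verify that the Hermitian-Einstein relation turns the Chern-curvature term into $+|\theta^{\ast H}s|_H^2$ rather than its negative, which hinges on using $\theta s=0$ to discard the $\theta^{\ast H}\circ\theta$ part of $[\theta,\theta^{\ast H}]$ and on the normalization $\Delta=2\sqrt{-1}\Lambda_\omega\pa\ol\pa$. The remaining issue is only to note that $s\in L^\infty$ makes $|s|_H^2$ bounded so that Assumption \ref{assump1} is applicable, which is precisely what allows us to replace the global integration by parts of the compact theory with a pointwise maximum principle on the noncompact base.
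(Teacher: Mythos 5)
Your proposal is correct and rests on the same mechanism as the paper's proof: in each case one computes $\Delta|s|_H^2$, exhibits it as a sum of pointwise squares, and then invokes the Liouville clause of Assumption \ref{assump1} applied to the bounded nonnegative function $|s|_H^2$; your parts $(2)$ and $(3)$ coincide with the paper's argument essentially line by line, including the trade $\sqrt{-1}\Lambda_\omega F_{\ol\pa_E,H}=-\sqrt{-1}\Lambda_\omega[\theta,\theta^{\ast H}]$ and the identity $(\sqrt{-1}\Lambda_\omega[\theta,\theta^{\ast H}]s,s)_H=|\theta^{\ast H}s|_H^2$ valid when $\theta s=0$, whose signs you handle correctly. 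The only genuine divergence is in part $(1)$: the paper complexifies, using the K\"ahler identities and the bidegree decomposition to arrive at $\Delta|f|_H^2=2|(D_H^{0,1}+\psi_H^{1,0})f|_H^2$, from which the conclusion reads off directly as the vanishing of the Dolbeault operator applied to $f$; you instead run a purely Riemannian Weitzenb\"ock computation in a normal frame, using $D_H^\ast\psi_H=0$ to cancel the derivative of $\psi_H$ and obtain $\Delta|s|_H^2=4|\psi_Hs|_H^2$. Your route buys two small things: it does not use the K\"ahler structure at all in this step (so part $(1)$ holds over any Riemannian base satisfying Assumption \ref{assump1}), and it yields the formally stronger conclusion that $\psi_Hs=0$ and $D_Hs=0$ separately, of which $(D_H^{0,1}+\psi_H^{1,0})s=0$ is a consequence. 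The paper's route keeps all three parts within one complex-geometric formalism. Both arguments are complete, and part $(3)$ follows in either case from $(1)$ and $(2)$ via $D_{\ol\pa_E,\theta,H}=D$ and $F_{\ol\pa_E,\theta,H}=D^2=0$ for a harmonic bundle, exactly as you say.
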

\begin{proof}
The statement $(3)$ follows from $(1)$ and $(2)$,
Firstly suppose $f$ is $D$-parallel, we have
\begin{equation}\begin{split}
\Delta|f|_H^2
&=\sqrt{-1}\Lambda_\omega(D_H^{1,0}D_H^{0,1}f,f)_H
+\sqrt{-1}\Lambda_\omega(D_H^{1,0}f,D_H^{1,0}f)_H
\\&-\sqrt{-1}\Lambda_\omega(D_H^{0,1}f,D_H^{0,1}f)_H
+\sqrt{-1}\Lambda_\omega(f,D_H^{0,1}D_H^{1,0}f)_H
\\&=|D_H^{0,1}f|_H^2+|\psi_H^{1,0}f|^2_H-\sqrt{-1}\Lambda_\omega(D_H^{1,0}\psi_H^{0,1}f,f)_H
-\sqrt{-1}\Lambda_\omega(f,D_H^{0,1}\psi_H^{1,0}f)_H
\\&=|D_H^{0,1}f|_H^2+|\psi_H^{1,0}f|^2_H+\sqrt{-1}\Lambda_\omega(\psi_H^{0,1}D_H^{1,0}f,f)_H
+\sqrt{-1}\Lambda_\omega(f,\psi_H^{1,0}D_H^{0,1}f)_H
\\&=|D_H^{0,1}f|_H^2+|\psi_H^{1,0}f|^2_H-\sqrt{-1}\Lambda_\omega(D_H^{1,0}f,\psi_H^{1,0}f)_H
+\sqrt{-1}\Lambda_\omega(\psi_H^{0,1}f,D_H^{0,1}f)_H
\\&=2|(D_H^{0,1}+\psi_H^{1,0})f|^2_H.
\end{split}\end{equation}
By Assumption \ref{assump1} we have $f$ is $D_H^{0,1}+\psi_H^{1,0}$-parallel.
\par On the other hand, if $f$ is $\ol\pa_{E,\theta}$-parallel, we deduce
\begin{equation}\begin{split}
\Delta|f|_H^2
&=\sqrt{-1}\Lambda_\omega(\pa_Hf,\pa_Hf)_H+\sqrt{-1}\Lambda_\omega(f,\ol\pa_E\pa_Hf)_H
\\&=|\pa_Hf|_H^2+\sqrt{-1}\Lambda_\omega(f,F_{\ol\pa_E,\theta,H}f)_H-\sqrt{-1}\Lambda_\omega(f,[\theta,\theta^{\ast H}]f)_H
\\&=|\pa_{\theta,H}f|_H^2,
\end{split}\end{equation}
and then we see $f$ is $D_{\ol\pa_E,\theta,H}$-parallel.
\end{proof}
Given a vector bundle $E$ over a complex manifold $(X,\omega)$, equipped with a Hermitian metric $H$ and a connection $D$.
The contraction of pseudo-curvature operator can be written as
\begin{equation}\begin{split}
\sqrt{-1}\Lambda_\omega G_H&=\frac{\sqrt{-1}}{4}\Lambda_\omega[D^{0,1}+\delta_H^{0,1},D^{1,0}-\delta_H^{1,0}]
\\&=\frac{\sqrt{-1}}{4}\Lambda_\omega\left(D^2-(\delta_H^{1,0}+D^{0,1})^2+(D^{1,0}+\delta_H^{0,1})^2-\delta_H^2\right),
\end{split}\end{equation}
If $D$ is flat, we see
\begin{equation}\begin{split}\label{sec428}
\sqrt{-1}\Lambda_\omega G_H&=\frac{\sqrt{-1}}{4}\Lambda_\omega\left(-(\delta_H^{1,0}+D^{0,1})^2+(D^{1,0}+\delta_H^{0,1})^2\right)
\\&=\frac{\sqrt{-1}}{4}\Lambda_\omega(-[D^{0,1},\delta_K^{1,0}+h^{-1}\delta_K^{1,0}h]+[\delta_K^{0,1}+h^{-1}\delta_K^{0,1}h,D^{1,0}])
\\&=\sqrt{-1}\Lambda_\omega G_K+
\frac{\sqrt{-1}}{4}\Lambda_\omega D(h^{-1}D_K^ch),
\end{split}\end{equation}
where $h=K^{-1}H$ and $D_K^c=\delta_K^{0,1}-\delta_K^{1,0}$.
\par Finally, we are in the position prove Theorem \ref{thm3}.
\begin{proof}[\textup{\textbf{Proof of Theorem \ref{thm3}}}]
Given $[D]\in\mathcal{M}_{Flat,K,\phi}$, by Theorem \ref{thm1}, we yield a Poisson metric $H$ with
$\det h=1,|h|\in L^\infty, ||Dh||_{L^2}\leq C||\phi||_{L^1}^{\frac{1}{2}}$ for a positive constant $C$ and $h=K^{-1}H$. Then
Proposition \ref{Higgsstructure1} implies $(\ol\pa_E,\theta)=(D_H^{0,1},\psi_H^{\perp,1,0})$ determines an irreducible Higgs structure
with $c_1(E,H)=c_1(E,K)=0$. Moreover, it can be seen $H$ is harmonic with respect to $D_H+\psi_H^\perp$
and due to Lemma \ref{isomorphic}, the assignment $[D]\mapsto[(\ol\pa_E,\theta)]$ is well-defined. Then we compute
\begin{equation}\begin{split}
\sqrt{-1}\Lambda_\omega F_{\ol\pa_E,\theta,K}
&=\sqrt{-1}\Lambda_\omega F_{\ol\pa_E,\theta,H}+\sqrt{-1}\Lambda_\omega\ol\pa_{E,\theta}(h\pa_{\theta,H}h^{-1})
\\&=\sqrt{-1}\Lambda_\omega D_{\ol\pa_E,\theta,H}(h\pa_{\theta,H}h^{-1})-\sqrt{-1}\Lambda_\omega\pa_{\theta,H}(h\pa_{\theta,H}h^{-1})
\\&=-\sqrt{-1}\Lambda_\omega D_{\ol\pa_E,\theta,H}(hD_{\ol\pa_E,\theta,H}^ch^{-1})+\sqrt{-1}\Lambda_\omega D_{\ol\pa_E,\theta,H}(h\ol\pa_{E,\theta}h^{-1})
\\&-\sqrt{-1}\Lambda_\omega\pa_{\theta,H}(h\pa_{\theta,H}h^{-1})
\\&=4\sqrt{-1}\Lambda_\omega G_H^\perp-4\sqrt{-1}\Lambda_\omega G_K^\perp
+\sqrt{-1}\Lambda_\omega D_{\ol\pa_E,\theta,H}(h\ol\pa_{E,\theta}h^{-1})
\\&-\sqrt{-1}\Lambda_\omega\pa_{\theta,H}(h\pa_{\theta,H}h^{-1})
\\&=-2(D_K^\ast\psi_K)^\perp+\sqrt{-1}\Lambda_\omega\pa_{\theta,H}(h\ol\pa_{E,\theta}h^{-1})
\\&+\sqrt{-1}\Lambda_\omega\ol\pa_{E,\theta}(h\ol\pa_{E,\theta}h^{-1})-\sqrt{-1}\Lambda_\omega\pa_{\theta,H}(h\pa_{\theta,H}h^{-1}),
\end{split}\end{equation}
and
\begin{equation}\begin{split}
\sqrt{-1}\Lambda_\omega\pa_{\theta,H}(h\ol\pa_{E,\theta}h^{-1})
&=\sqrt{-1}\Lambda_\omega(\pa_{\theta,H}h\wedge\ol\pa_{E,\theta}h^{-1})
+\sqrt{-1}\Lambda_\omega(\ol\pa_{E,\theta}h\wedge\pa_{\theta,H}h^{-1})
\\&-\sqrt{-1}\Lambda_\omega(\ol\pa_{E,\theta}h\wedge\pa_{\theta,H}h^{-1})
-\sqrt{-1}\Lambda_\omega(h\ol\pa_{E,\theta}\pa_{\theta,H}h^{-1})
\\&=\sqrt{-1}\Lambda_\omega(\pa_{\theta,H}h\wedge\ol\pa_{E,\theta}h^{-1})
+\sqrt{-1}\Lambda_\omega(\ol\pa_{E,\theta}h\wedge\pa_{\theta,H}h^{-1})
\\&-\sqrt{-1}\Lambda_\omega F_{\ol\pa_E,\theta,K}.
\end{split}\end{equation}
where we have used $(\ref{sec428})$. We obtain from above two equalities that
\begin{equation}\begin{split}\label{sec429}
\sqrt{-1}\Lambda_\omega F_{\ol\pa_E,\theta,K}
&=-(D_K^\ast\psi_K)^\perp+\frac{\sqrt{-1}}{2}\Lambda_\omega(\pa_{\theta,H}h\wedge\ol\pa_{E,\theta}h^{-1})
\\&+\frac{\sqrt{-1}}{2}\Lambda_\omega(\ol\pa_{E,\theta}h\wedge\pa_{\theta,H}h^{-1})+\frac{\sqrt{-1}}{2}\ol\pa_{E,\theta}(h\ol\pa_{E,\theta}h^{-1})
\\&-\frac{\sqrt{-1}}{2}\Lambda_\omega\pa_{\theta,H}(h\pa_{\theta,H}h^{-1}),
\end{split}\end{equation}
from which we conclude $|\Lambda_\omega F_{\ol\pa_E,\theta,K}|\in L^1$.
\par For any $\theta$-invariant sub-holomorphic bundle $V\subset E$, we have the orthogonal decomposition $E=V\oplus V^\perp$ with
respect to $K$ and the projection $\pi_K$ onto $V$. We denote by $K_V$, $H_V$ the metrics on $V$ induced by $K$ and $H$.
If $|\ol\pa_{E,\theta}\pi_K|\not\in L^2$, by Chern-Weil formula $(\ref{ChernHiggs})$ and $|\Lambda_\omega F_{\ol\pa_E,\theta,K}|\in L^1$, we know
\begin{equation}\begin{split}\label{sec4210}
\deg_\omega(V,K)=-\infty.
\end{split}\end{equation}
If $|\ol\pa_{E,\theta}\pi_K|\in L^2$, the same reason indicates $\deg_\omega(V,K)$ is finite. It also holds
\begin{equation}\begin{split}
h_V=\pi_K\circ h\circ\pi_K,
\end{split}\end{equation}
\begin{equation}\begin{split}
\ol\pa_{V,\theta_V}h_V=\pi_K\circ\ol\pa_{E,\theta}h\circ\pi_K+\ol\pa_{E,\theta}\pi_K\circ(\id_E-\pi_K)\circ h\circ\pi_K,
\end{split}\end{equation}
where $h_V=K_V^{-1}H_V$ and $\ol\pa_{V,\theta_V}$ is the induced Higgs structure on $V$. Therefore
\begin{equation}\begin{split}
||\tr(h_V^{-1}\pa_{\theta_V,K_V}h_V)||_{L^2}
&\leq||h_V^{-1}||_{L^\infty}||\ol\pa_{V,\theta_V}h_V||_{L^2}
<\infty.
\end{split}\end{equation}
So Simpson's Lemma 5.2 in \cite{Si1988}(or Yau's Lemma in \cite{Ya1976}) applies,
\begin{equation}\begin{split}
\lim\limits_{j\rightarrow\infty}\int_{X_j}\sqrt{-1}\ol\pa\tr(h_V^{-1}\pa_{\theta_V,K_V}h_V)=0,
\end{split}\end{equation}
where $\{X_j\}$ is a sequence of exhaustion subsets.
And
\begin{equation}\begin{split}\label{sec4211}
\lim\limits_{j\rightarrow\infty}\int_{X_j}\sqrt{-1}\tr F_{\ol\pa_V,\theta_V,H_V}
&=\lim\limits_{j\rightarrow\infty}\int_{X_j}
\sqrt{-1}\tr F_{\ol\pa_V,\theta_V,K_V}
\\&+\lim\limits_{j\rightarrow\infty}\int_{X_j}
\sqrt{-1}\tr(\ol\pa_{V,\theta_V}(h_V^{-1}\pa_{\theta_V,K}h_V))
\\&=\deg_\omega(V,K),
\end{split}\end{equation}
then we have
\begin{equation}\begin{split}\label{sec4212}
\lim\limits_{j\rightarrow\infty}\int_{X_j}\sqrt{-1}\tr F_{\ol\pa_V,\theta_V,H_V}
&=\int_{X}\sqrt{-1}\tr F_{\ol\pa_V,\theta_V,H_V}
=\deg_\omega(V,H).
\end{split}\end{equation}
Since $(\ol\pa_E,\theta)$ is $H$-analytically stable, we conclude by $(\ref{sec4210})$, $(\ref{sec4211})$ and $(\ref{sec4212})$
that $(\ol\pa_E,\theta)$ is $K$-analytically stable.
\par Next assume $\tilde{H}$ be another Hermitian-Einstein metric such that
$\det\tilde{h}=1$ and $|\tilde{h}|\in L^\infty$ for $\tilde{h}=K^{-1}\tilde{H}$. Set $\hat{h}=H^{-1}\tilde{H}$, we have $\sqrt{-1}\Lambda_\omega\pa\ol\pa\tr\hat{h}=|\hat{h}^{-\frac{1}{2}}\pa_{\theta,H}\hat{h}|_{H}^2$ and the Assumption \ref{assump1} implies $\pa_{\theta,H}\hat{h}=0$. We let $E=\mathop{\bigoplus}\limits_{j=1}^mE_j$ be the
eigendecomposition of $\hat{h}$, which is orthogonal with respect to $H$ and $\tilde{H}$. Moreover, it holds $\ol\pa_{E,\theta}\pi_j=0$ for the projection
$\pi_j$ onto $E_j$. But $(\ol\pa_E,\theta)$ is $H$-analytically stable, we know $m=1$, $H=\tilde{H}$ and therefore $[(\ol\pa_E,\theta)]\in\mathcal{M}_{Higgs,K}$.
\par Conversely, given $[(\ol\pa_E,\theta)]\in\mathcal{M}_{Higgs,K,\phi}$, by Mochizuki's work \cite{Mo2020} we know that there
exists a Hermitian-Einstin metric $H$ with $\det h=1$, $|h|\in L^\infty$ and $||\ol\pa_{E,\theta}h||_{L^2}\leq C||\phi||_{L^1}^{\frac{1}{2}}$ for
a positive constant $C$, where $h=K^{-1}H$. Moreover, the Hitchin-Simpson connection $D_{\ol\pa_E,\theta,H}$ is flat and irreducible.
Due to Lemma \ref{isomorphic}, the map $[(\ol\pa_E,\theta)]\mapsto[D_{\ol\pa_E,\theta,H}]$ is well-defined. Next as $H$ is harmonic with respect
to $D=D_{\ol\pa_E,\theta,H}$, a similar discussion as that in $(\ref{sec429})$ shows
\begin{equation}\begin{split}
D_K^\ast\psi_K
&=-\sqrt{-1}\Lambda_\omega F_{\ol\pa_E,\theta,K}+\frac{\sqrt{-1}}{2}\Lambda_\omega(\pa_{\theta,H}h\wedge\ol\pa_{E,\theta}h^{-1})
\\&+\frac{\sqrt{-1}}{2}\Lambda_\omega(\ol\pa_{E,\theta}h\wedge\pa_{\theta,H}h^{-1})+\frac{\sqrt{-1}}{2}\ol\pa_{E,\theta}(h\ol\pa_{E,\theta}h^{-1})
\\&-\frac{\sqrt{-1}}{2}\Lambda_\omega\pa_{\theta,H}(h\pa_{\theta,H}h^{-1}).
\end{split}\end{equation}
and therefore $|(D_K^\ast\psi_K)^\perp|=|D_K^\ast\psi_K|\in L^1$. Then proceeding the argument in the proof of the
Theorem \ref{thm1}, we conclude $D_{\ol\pa_E,\theta,H}$ is analytically stable with respect to the background metric $K$ and hence
$[D_{\ol\pa_E,\theta,H}]\in\mathcal{M}_{Flat,K}$.
\end{proof}
\subsection{Vanishing theorems of characteristic classes}
Associated with a vector bundle $(E,D)$, up to some normalization coefficients, the Kamber-Tondeur classes are defined by
\begin{equation}\begin{split}
\alpha_{2k+1}(E,D)=[\tr\psi_H^{2k+1}]\in H^{2k+1}_{DR}(M,\mathbb{R}),
\end{split}\end{equation}
and the class is independent of the choice of fiber metrics, see \cite{BL1995,Da2019,Du1976,KT1974,PZZ2019} for the related study of
these characteristic classes.
\par By Proposition \ref{Higgsstructure2}, we know $\psi_{H}^{1,0}\wedge\psi_H^{1,0}=0$ and following \cite{Ko2017}, for any $k\geq1$ we deduce
\begin{equation}\begin{split}
\tr\psi_H^{2k+1}
&=\tr\left((\psi_H^{1,0}\wedge\psi_H^{0,1})^{2k}\wedge\psi_H^{1,0}\right)+\tr\left((\psi_H^{0,1}\wedge\psi_H^{1,0})^{2k}\wedge\psi_H^{0,1}\right)
\\&=\tr\left(\psi_H^{1,0}\wedge(\psi_H^{1,0}\wedge\psi_H^{0,1})^{2k}\right)+\tr\left(\psi_H^{0,1}\wedge(\psi_H^{0,1}\wedge\psi_H^{1,0})^{2k}\right)
\\&=0.
\end{split}\end{equation}
So we conclude the following vanishing theorem.
\begin{theorem}\label{vanishing}
Let $(X,\omega)$ be a complete K\"{a}hler manifold with bounded Ricci curvature from below and satisfying the Assumptions \ref{assump1}.
Assume $(E,D)$ be a $K$-analytically stable flat bundle over $X$ such that $|(D_K^\ast\psi_K)^\perp|\leq C\phi$ for a constant $C$
and $|\psi_K|\in L^2$. Then the characteristic classes $\alpha_{2k+1}(E,D)$ vanish for all $k\geq1$.
\end{theorem}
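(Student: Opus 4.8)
The plan is to exhibit a single fiber metric $H$ for which the representative $\tr\psi_H^{2k+1}$ of the class $\alpha_{2k+1}(E,D)$ vanishes identically as a differential form; since the Kamber--Tondeur classes are independent of the choice of fiber metric, this forces $\alpha_{2k+1}(E,D)=0$ in $H^{2k+1}_{DR}(X,\mathbb{R})$. First I would apply Theorem \ref{thm1} to produce a Poisson metric $H$ with $\det h=1$, $|h|\in L^\infty$ and $|Dh|\in L^2$, and then invoke Proposition \ref{Higgsstructure2}, whose hypotheses (completeness, Ricci bounded from below, $|\psi_K|\in L^2$, and $|(D_K^\ast\psi_K)^\perp|\leq C\phi$) are exactly those assumed here. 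This yields that $(D_H^{0,1},\psi_H^{\perp,1,0})$ is a genuine Higgs structure, so that $\psi_H^{\perp,1,0}\wedge\psi_H^{\perp,1,0}=0$; because the trace part of $\psi_H^{1,0}$ is a scalar $(1,0)$-form times $\id_E$ and commutes with every endomorphism, this upgrades to the nilpotency $\psi_H^{1,0}\wedge\psi_H^{1,0}=0$, and by taking $H$-adjoints $\psi_H^{0,1}\wedge\psi_H^{0,1}=0$ as well.

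With this nilpotency in hand, I would decompose $\psi_H=\psi_H^{1,0}+\psi_H^{0,1}$ and expand the wedge power $\psi_H^{2k+1}$ into words in the two letters $\psi_H^{1,0}$ and $\psi_H^{0,1}$. Any word containing two adjacent identical letters is killed by the nilpotency of each type, so the only words that survive are the two alternating ones of odd length $2k+1$, namely $(\psi_H^{1,0}\wedge\psi_H^{0,1})^{k}\wedge\psi_H^{1,0}$ and $(\psi_H^{0,1}\wedge\psi_H^{1,0})^{k}\wedge\psi_H^{0,1}$. This is where the restriction $k\geq1$ enters: for $k=0$ there is no second letter of the same type to bring into contact, so the argument produces no cancellation (and indeed $\alpha_1$ need not vanish).

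It remains to show that these two surviving traces vanish, and here the key tool is the graded cyclicity of the trace of $\End(E)$-valued forms. Each factor $\psi_H^{1,0},\psi_H^{0,1}$ has form-degree one, so cyclically moving the final degree-one factor past the remaining degree-$2k$ block costs the sign $(-1)^{1\cdot 2k}=+1$; thus $\tr\left((\psi_H^{1,0}\wedge\psi_H^{0,1})^{k}\wedge\psi_H^{1,0}\right)=\tr\left(\psi_H^{1,0}\wedge(\psi_H^{1,0}\wedge\psi_H^{0,1})^{k}\right)$, whose leading two factors now form $\psi_H^{1,0}\wedge\psi_H^{1,0}=0$; the identical maneuver annihilates the second term. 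Hence $\tr\psi_H^{2k+1}=0$ pointwise, and the vanishing of the cohomology class follows at once.

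The genuinely substantive input is not this bookkeeping but Proposition \ref{Higgsstructure2}: the whole argument rests on the fact that the Poisson metric $H$ furnished by Theorem \ref{thm1} is in fact pluriharmonic, so that $\psi_H^{1,0}$ really is a \emph{nilpotent} Higgs field rather than merely a self-adjoint one-form. I expect the main obstacle, were Proposition \ref{Higgsstructure2} not already available, to be precisely the verification that the pseudo-curvature $G_H^\perp$ vanishes under the stated completeness and $L^2$ hypotheses; granting that proposition, the present theorem is essentially a formal consequence of the type decomposition and the nilpotency it provides.
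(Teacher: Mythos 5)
Your proposal is correct and follows essentially the same route as the paper: obtain the Poisson metric from Theorem \ref{thm1}, invoke Proposition \ref{Higgsstructure2} to get the nilpotency $\psi_H^{1,0}\wedge\psi_H^{1,0}=0$, expand $\tr\psi_H^{2k+1}$ into the two surviving alternating words, and kill them by graded cyclicity of the trace (the computation the paper attributes to \cite{Ko2017}), concluding via metric-independence of the Kamber--Tondeur classes. If anything, your write-up is slightly more careful than the paper's: you record the correct exponent $k$ (the paper's displayed $(\psi_H^{1,0}\wedge\psi_H^{0,1})^{2k}$ is a typo for degree reasons), you justify passing from $\psi_H^{\perp,1,0}$ to the full $\psi_H^{1,0}$ and the adjoint statement for $\psi_H^{0,1}$, and you explain why $k\geq1$ is needed.
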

We mention that in the compact setting, this result was firstly proved in \cite{Re1995}.

\end{document}